\newtheorem{thm}[equation]{Theorem}
\newtheorem{cor}[equation]{Corollary}
\newtheorem{lem}[equation]{Lemma}
\newtheorem{prop}[equation]{Proposition}
\theoremstyle{definition}
\newtheorem{defn}[equation]{Definition}
\theoremstyle{remark}
\newtheorem{rem}[equation]{Remark}
\newtheorem{ass}[equation]{Assumptions}
\newcommand{\der}{\operatorname{Der}}
\newcommand{\abs}[1]{|#1|}
\newcommand{\dgoperad}{\operatorname{dgOp}}
\newcommand{\algebra}[1]{\operatorname{Alg}_{#1}}
\def\r{\rightarrow} 
\def\into{\rightarrowtail}
\newcommand{\id}[1]{\mathrm{id}_{#1}}
\newcommand{\bimod}[1]{{#1}\operatorname{-Mod}}
\def\hom{\operatorname{Hom}}
\def\rmap{\operatorname{Map}}
\def\raut{\operatorname{Aut}^h}
\def\gsquare{\operatorname{Sq}}
\def\aut{\operatorname{Aut}}
\def\st{\stackrel} 
\def\To{\longrightarrow}
\newcommand{\ans}[1]{\operatorname{Alg}_{\opnot A_{#1}}}
\newcommand{\cplx}{\operatorname{Ch}}
\renewcommand{\ker}{\operatorname{Ker}}
\newcommand{\im}{\operatorname{Im}}
\newcommand{\opnot}{\mathcal}
\newcommand{\eop}[1]{\opnot E(#1)}
\newcommand{\hc}[3]{C^{#1,#2}(#3)}
\newcommand{\hz}[3]{Z^{#1,#2}(#3)}
\newcommand{\hh}[3]{HH^{#1,#2}(#3)}
\newcounter{rpage}
\newcommand{\rpage}{\value{rpage}} 				
\newcommand{\rescale}{0.3}			
\newcommand{\margen}{0.1}			
\newcommand{\abajo}{4}				
\newcommand{\arriba}{3}
\newcommand{\derecha}{5} 			
\newcounter{trunco}
\newcommand{\trunco}{\value{trunco}}    	
\begin{document}

\title
{Enhanced $A$-infinity obstruction theory}%
\author{Fernando Muro}%
\address{Universidad de Sevilla,
Facultad de Matem\'aticas,
Departamento de \'Algebra,
Avda. Reina Mercedes s/n,
41012 Sevilla, Spain}
\email{fmuro@us.es}
\urladdr{http://personal.us.es/fmuro}

\thanks{The author was partially supported
by the Spanish Ministry of Economy under the grant MTM2016-76453-C2-1-P (AEI/FEDER, UE)}
\subjclass[2010]{18G40, 18D50, 55S35}
\keywords{Spectral sequence, $A$-infinity algebra, operad, mapping space.}

\begin{abstract}
	We extend the Bousfield--Kan spectral sequence for the computation of the homotopy groups of the space of minimal $A$-infinity algebra structures on a graded projective module. We use the new part to define obstructions to the extension of truncated minimal $A$-infinity algebra structures. We also consider the Bousfield--Kan spectral sequence for the moduli space of $A$-infinity algebras. We compute up to the $E_2$ terms and differentials $d_2$ of these spectral sequences in terms of Hochschild cohomology.
\end{abstract}

\maketitle
\tableofcontents


\numberwithin{equation}{section}

\section*{Introduction}

An \emph{$A$-infinity algebra} $(X,d,m_2,m_3,\dots, m_n,\dots)$ over a field is a cochain complex of vector spaces $(X,d)$ equipped with a sequence of morphisms of graded vector spaces
$$m_n\colon X\otimes\st{n}\cdots\otimes X\To X,$$
of degree $2-n$ satisfying certain equations. Differential graded algebras are $A$-infinity algebras with $0=m_3=m_4=\cdots=m_n=\cdots$ ($m_2$ is the product). An $A$-infinity algebra is \emph{minimal} if the differential of the underlying complex vanishes $d=0$. Any differential graded algebra $A$ is quasi-isomorphic to a minimal $A$-infinity structure on its cohomology $(H^*X,0,m_2,m_3,\dots, m_n,\dots)$ where $m_2$ is the induced product in $H^*X$. Therefore, if we aim at understanding the differential graded algebras with fixed cohomology graded algebra $A=(H^*X,m_2)$ we should consider the minimal $A$-infinity structures on it. 

In order to investigate such structures, there are well known obstruction theories which decide when a truncated minimal $A$-infinity structure $(X,0,m_2,\dots,m_{k})$, also called \emph{$A_{k}$-structure}, can be extended to an $A_{k+1}$-structure, see for instance \cite[Ch.~B]{lefevre-hasegawa_sur_2003}.  In the way we have posed the problem, obstructions are Hochschild cochains. They live in a Hochschild complex, which is usually huge, and must vanish as cochains, not just in cohomology. Hence, they are of little interest. If we allow for the modification of the last piece of the $A_k$-structure, $m_k$, then the obstruction lies in the Hochschild cohomology of the graded algebra $A=(X,m_2)$, more precisely in $$\hh{k+1}{2-k}{A}.$$
The first degree in Hochschild cohomology $\hh{p}{q}{A}$ is the Hochschild degree and the second degree is the internal degree coming from the grading of $A$. 
The first possibly non-trivial obstruction occurs when $k=4$. In that case $m_3$ represents a Hochschild cohomology class
$$\{m_3\}\in \hh{3}{-1}{A},$$
that we call \emph{universal Massey product} \cite{kadeishvili_algebraic_1982, benson_realizability_2004}, 
and the obstruction is the Gerstenhaber square
$$\gsquare(\{m_{3}\})\in \hh{5}{-2}{A},$$
which in characteristic $\neq 2$ is simply given by the Lie bracket 
$$\gsquare(\{m_{3}\})=\frac{1}{2}[\{m_3\},\{m_3\}].$$

If the Hochschild cohomology of $A$ vanishes in total degree $3$ then all $A_k$-structures extend to  $A$-infinity structures, since recipients of obstructions vanish. However this happens very seldom, and obstructions are rather complicated to compute when they live in a non-trivial vector space.

In this paper, given a minimal $A_k$-algebra $(X,0,m_2,\dots,m_{k})$, we define the first $\lfloor \frac{k+1}{2}\rfloor$ pages of a spectral sequence $E_r^{st}$, that we therefore call \emph{truncated}, which is concentrated in (most of) the right half plane, and an obstruction living in $E_r^{k-1,k-2}$ to the existence of an $A_{k+1}$-extension after possibly perturbing $m_{k-r+2},\dots,m_{k}$. A similar approach has already been developed by Angeltveit \cite[Theorem 3.11]{angeltveit_topological_2008} for ring spectra, using a different spectral sequence. For $k\geq 3$, the second page of our spectral sequence is
$$E_2^{st}=\hh{s+2}{-t}{A}$$
for $s>0$ and $t\in\mathbb Z$, 
and the obstruction for $r=2$ is the classical one (for $r=1$ we obtain the uninteresting Hochschild cochains). 
Hence, the more perturbations we allow, the bigger are the chances of getting a trivial obstruction. Even the recipient is more likely to be trivial! We also prove as a main result that, for $k\geq 5$, the differential of the second page in the previous range is given by the Gerstenhaber bracket with the universal Massey product \[d_2=\pm[\{m_3\},-].\] 
In particular, if we know the algebraic structure of the Hochschild cohomology we can directly compute most of the third page $E^{st}_3$. 
Note that the differential $\pm[\{m_3\},-]$ indeed squares to zero since $k\geq 5$, therefore the obstruction  $\gsquare(\{m_3\})=0$ vanishes, and by standard identities in a Gerstengaber algebra $d_2^2=[\{m_3\},[\{m_3\},-]]=[\gsquare(\{m_3\}),-]$. 

This obstruction theory will be applied in a forthcoming paper to determine which of the non-standard finite $1$-Calabi-Yau triangulated categories, classified by Amiot in \cite{amiot_structure_2007}, have enhancements, and to compute the homotopy groups of the space of enhancements when they exist. This calculations are lengthy and we therefore prefer not to include them here. We will actually show that any possible enhancement of a locally finite triangulated category over a field \cite{krause_report_2012} is completely determined by its universal Massey product.

The truncated spectral sequence, far from being a mere artifact to contain obstructions, has the following homotopical meaning. The space of DG-algebra structures on a given complex $X=(X,d)$ is weakly equivalent to the space of $A_\infty$-structures, which is the mapping space
\[\rmap(\opnot A_\infty,\eop{X})\]
in the model category $\dgoperad$ of differential graded (non-symmetric) operads, see \cite[Theorem 1.1]{muro_homotopy_2011} or \cite[Proposition 1.8]{lyubashenko_homotopy_2011}, from the $A$-infinity operad $\opnot A_\infty$ to the endomorphism operad $\eop{X}$ of $X$. Vertices in this space are honest $A$-infinity structures on $X$ and two of them lie in the same component if and only if there is an $A_\infty$-map them whose linear part is the identity in cohomology (in particular an $A$-infinity quasi-isomorphism), see \cite[Corollary 2.3]{muro_cylinders_2016} or \cite{fresse_operadic_2009}. The previous mapping space is the homotopy limit of a tower of fibrations
$$\cdots\r \rmap(\opnot A_{n+1},\eop{X})\To  \rmap(\opnot A_{n},\eop{X})\r\cdots$$
whose layers are the spaces of $A_n$-algebra structures, $n\geq 2$. 

A base point in $\rmap(\opnot A_\infty,\eop{X})$, i.e.~an $A_\infty$-structure on $X$, induces base points the in tower by restriction. The \emph{Bousfield--Kan fringed spectral sequence} \cite[Ch.~IX, \S4]{bousfield_homotopy_1972} of such a  tower of based fibrations is a spectral sequence concentrated in the upper half of the bisection of the first quadrant $t\geq s\geq 0$ (Fig.~\ref{bkrd}). 
\setcounter{rpage}{4}
\begin{figure}
\begin{tikzpicture}


\draw[step=\rescale,gray,very thin] (0,{-\abajo*\rescale+\margen}) grid ({(3*\rpage -3 +\derecha)*\rescale-\margen},{(3*\rpage -3 +\arriba)*\rescale-\margen});


\filldraw[fill=red,draw=none,opacity=0.2] (0,0) -- ({(3*\rpage -3 +\arriba)*\rescale-\margen},{(3*\rpage -3 +\arriba)*\rescale-\margen}) --
(0,{(3*\rpage -3 +\arriba)*\rescale-\margen}) -- cycle;
\draw[red!70,thick]  (0,0) -- ({(3*\rpage -3 +\arriba)*\rescale-\margen},{(3*\rpage -3 +\arriba)*\rescale-\margen});


\draw [->] (0,0)  -- ({(3*\rpage -3 + \derecha)*\rescale},0) node[anchor=north] {$\scriptstyle s$};


\draw [->] (0,-{\abajo*\rescale}) -- (0,{(3*\rpage -3 +\arriba)*\rescale})node[anchor=east] {$\scriptstyle t$};
\end{tikzpicture}
\caption{Range of definition of the Bousfield--Kan fringed spectral sequence of a tower of fibrations.}\label{bkrd}
\end{figure}
It is somewhat anomalous since, despite it mostly consists of abelian groups $E_r^{st}$, $t-s\geq 2$, the  terms are just groups for $t-s=1$, and plain pointed sets for $t-s=0$. Moreover, the terms of the angle bisector (known as fringed line) $E_r^{ss}$ are not given by the homology of differentials $d_{r-1}$. 

For our particular tower of fibrations, we extend the Bousfield--Kan fringed spectral sequence to most of the right half plane (Fig.~\ref{mrd}). More precisely, we extend the page $E_r$ to the half plane $s\geq 2r-3$.  There are homogenoeus definitions of the terms $E^{st}_{r}$  in the red and blue regions which coincide in the overlap. They are not defined in the white region. The blue region consists of vector spaces. Moreover, we show that the groups $E_r^{s,s+1}$ are abelian for all $s\geq 0$ and we endow the terms $E_r^{ss}$  with an abelian group structure for $s\geq r-1$. 

Differentials 
$$d_r\colon  E^{st}_r\To E^{s+r,t+r-1}_r,$$ 
which are abelian group or vector space homomorphisms, are defined 
except for $t=s< r$. This includes the Bousfield--Kan differentials in the red region, all possible differentials in the blue region, as well as some differentials departing from the fringed line which jump over the white region (Fig.~\ref{mrd}). The term $E^{st}_{r+1}$ is the homology of  $d_{r}$ whenever the incoming and outgoing differentials are defined (the incoming differential is taken to be $0\r E^{st}_{r}$ if $t>s<r$). This covers most of the terms, it just excludes the pointed sets $E^{ss}_{r+1}$, $s<r$, and the vector spaces $E^{st}_{r+1}$, $2r-1\leq s<3r-3$, $t-s<-1$, below the fringed line, even below the line $t-s=-1$ where obstructions live (Fig.~\ref{mrd2}).

\setcounter{rpage}{5}
\begin{figure}
	\begin{tikzpicture}
	
	
	\draw[step=\rescale,gray,very thin] (0,{-\abajo*\rescale+\margen}) grid ({(3*\rpage -3 +\derecha)*\rescale-\margen},{(3*\rpage -3 +\arriba)*\rescale-\margen});
	
	
	\filldraw[fill=red,draw=none,opacity=0.2] (0,0) -- ({(3*\rpage -3 +\arriba)*\rescale-\margen},{(3*\rpage -3 +\arriba)*\rescale-\margen}) --
	(0,{(3*\rpage -3 +\arriba)*\rescale-\margen}) -- cycle;
	\draw[red!70,thick]  (0,0) -- ({(3*\rpage -3 +\arriba)*\rescale-\margen},{(3*\rpage -3 +\arriba)*\rescale-\margen});

	
	\foreach  \x in {2,...,\rpage}
	\node[fill=red,draw=none,circle,inner sep=.5mm,opacity=1]   at ({(\x-2)*\rescale},{(\x-2)*\rescale}) {};

	
	\filldraw[fill=blue,draw=none,opacity=0.2]  (0,{(3*\rpage -3 +\arriba)*\rescale-\margen}) --
	(0,2*\rescale) -- ({(\rpage-2)*\rescale},{\rpage*\rescale}) -- ({(\rpage-2)*\rescale},{(\rpage-1)*\rescale}) --({(2*\rpage-3)*\rescale},{(2*\rpage-2)*\rescale}) -- ({(2*\rpage-3)*\rescale},{-\abajo*\rescale+\margen}) -- ({(3*\rpage -3 +\derecha)*\rescale-\margen},{-\abajo*\rescale+\margen}) --
	({(3*\rpage -3 +\derecha)*\rescale-\margen},{(3*\rpage -3 +\arriba)*\rescale-\margen}) -- cycle;
	
	\draw[blue!70,thick] (0,2*\rescale) -- ({(\rpage-2)*\rescale},{\rpage*\rescale}) -- ({(\rpage-2)*\rescale},{(\rpage-1)*\rescale}) --({(2*\rpage-3)*\rescale},{(2*\rpage-2)*\rescale}) -- ({(2*\rpage-3)*\rescale},{-\abajo*\rescale+\margen}) node[black,anchor=north] {$\scriptscriptstyle 2r-3$};
	
	\node[black,anchor=north] at ({(\rpage-2)*\rescale},{-\abajo*\rescale+\margen}) {$\scriptscriptstyle r-2$};
	
%
	
	
	\draw [->] (0,0)  -- ({(3*\rpage -3 + \derecha)*\rescale},0) node[anchor=north] {$\scriptstyle s$};
	
	
	\draw [->] (0,-{\abajo*\rescale}) -- (0,{(3*\rpage -3 +\arriba)*\rescale})node[anchor=east] {$\scriptstyle t$};
	
	
	\node[fill=black,draw=none,circle,inner sep=.5mm,opacity=1]   at ({\rpage*\rescale},{\rpage*\rescale}) {};
	\draw [black,thick,->] ({\rpage*\rescale},{\rpage*\rescale})  -- ({2*\rpage*\rescale},{(2*\rpage-1)*\rescale});
	\end{tikzpicture}
	\caption{Range of definition of the extended spectral sequence ($r=\arabic{rpage}$). We have vector spaces in the blue region and abelian groups in the red region, except in the red dots, which are pointed sets. We depict a jumping differential.}\label{mrd}
\end{figure}

The limit terms $E^{st}_{\infty}$ are only defined for $t\geq s\geq 0$  since the vertical blue line moves to the right as $r$ increases, hence they are just Bousfield--Kan's. This does not diminish the relevance of our extension since its extra structure facilitates computations. Convergence issues were satisfactorily treated by Bousfield and Kan \cite[Ch.~IX, \S5]{bousfield_homotopy_1972}, the term $E^{st}_{\infty}$ contributes to $\pi_{t-s}\rmap(\opnot A_\infty,\eop{X})$. We here describe these homotopy groups in terms of the Hochschild cohomology of the base $A$-infinity algebra.

We fully calculate the second page of the extended spectral sequence, terms and differentials, which is essentially given by Hochschild cohomology and by the Gerstenhaber bracket with the universal Massey product of a minimal model, as indicated above. In particular, the terms $E_2^{st}$ and the differential $d_2$ only depend on the underlying $A_3$- and $A_5$-algebra structures, respectively. We show more generally that the page $E_r^{st}$ and the differential $d_r$ only depend on the underlying $A_{2r-1}$- and $A_{2r+1}$-structures. This observation leads to the construction of the aforementioned truncated spectral sequence.

\begin{figure}
	\begin{tikzpicture}
	
	
	\draw[step=\rescale,gray,very thin] (0,{-\abajo*\rescale+\margen}) grid ({(3*\rpage -3 +\derecha)*\rescale-\margen},{(3*\rpage -3 +\arriba)*\rescale-\margen});
	
		
		\filldraw[fill=red!50,draw=none,opacity=0.2] (0,1*\rescale) -- (\rpage*\rescale,{(\rpage +1)*\rescale}) -- (\rpage*\rescale,{\rpage*\rescale}) -- (0,0) -- cycle;
		\draw[red!30,thick]  (\rpage*\rescale,{\rpage*\rescale}) -- (0,0) ;
		
			
			\foreach  \x in {1,...,\rpage}
			\node[fill=red!40,draw=none,circle,inner sep=.5mm,opacity=1]   at ({(\x-1)*\rescale},{(\x-1)*\rescale}) {};
	
	
	\filldraw[fill=red,draw=none,opacity=0.2] (0,1*\rescale) -- (\rpage*\rescale,{(\rpage +1)*\rescale}) -- (\rpage*\rescale,{\rpage*\rescale}) -- ({(3*\rpage -3 +\arriba)*\rescale-\margen},{(3*\rpage -3 +\arriba)*\rescale-\margen}) --
	(0,{(3*\rpage -3 +\arriba)*\rescale-\margen}) -- cycle;
	\draw[red!70,thick]  (0,1*\rescale) -- (\rpage*\rescale,{(\rpage +1)*\rescale}) -- (\rpage*\rescale,{\rpage*\rescale}) node[black,anchor=north] {$\scriptscriptstyle r$}  -- ({(3*\rpage -3 +\arriba)*\rescale-\margen},{(3*\rpage -3 +\arriba)*\rescale-\margen}) ;

	
	\filldraw[fill=blue!35,draw=none,opacity=0.2]  (0,{(3*\rpage -3 +\arriba)*\rescale-\margen}) --
	(0,2*\rescale) -- ({(\rpage-1)*\rescale},{(\rpage+1)*\rescale}) -- ({(\rpage-1)*\rescale},{(\rpage)*\rescale}) --({(2*\rpage-1)*\rescale},{(2*\rpage)*\rescale}) -- ({(2*\rpage-1)*\rescale},{-\abajo*\rescale+\margen}) -- ({(3*\rpage -3 +\derecha)*\rescale-\margen},{-\abajo*\rescale+\margen}) --
	({(3*\rpage -3 +\derecha)*\rescale-\margen},{(3*\rpage -3 +\arriba)*\rescale-\margen}) -- cycle;
	
	\draw[blue!35,thick,opacity=1] (0,2*\rescale) -- ({(\rpage-1)*\rescale},{(\rpage+1)*\rescale}) -- ({(\rpage-1)*\rescale},{(\rpage)*\rescale}) --({(2*\rpage-1)*\rescale},{(2*\rpage)*\rescale}) -- ({(2*\rpage-1)*\rescale},{-\abajo*\rescale+\margen}) node[black,anchor=north] {$\scriptscriptstyle 2r-1$};
	
	
	\filldraw[fill=blue,draw=none,opacity=0.2]  (0,{(3*\rpage -3 +\arriba)*\rescale-\margen}) --
	(0,2*\rescale) -- ({(\rpage-1)*\rescale},{(\rpage+1)*\rescale}) -- ({(\rpage-1)*\rescale},{(\rpage)*\rescale}) --({(2*\rpage-1)*\rescale},{(2*\rpage)*\rescale}) -- ({(2*\rpage-1)*\rescale},{(2*\rpage-2)*\rescale}) --
	({(3*\rpage-3)*\rescale},{(3*\rpage-4)*\rescale}) -- ({(3*\rpage-3)*\rescale},{-\abajo*\rescale+\margen}) -- ({(3*\rpage -3 +\derecha)*\rescale-\margen},{-\abajo*\rescale+\margen}) --
	({(3*\rpage -3 +\derecha)*\rescale-\margen},{(3*\rpage -3 +\arriba)*\rescale-\margen}) -- cycle;
	
	\draw[blue,thick,opacity=1] (0,2*\rescale) -- ({(\rpage-1)*\rescale},{(\rpage+1)*\rescale}) -- ({(\rpage-1)*\rescale},{(\rpage)*\rescale}) --({(2*\rpage-1)*\rescale},{(2*\rpage)*\rescale}) -- ({(2*\rpage-1)*\rescale},{(2*\rpage-2)*\rescale}) --
	({(3*\rpage-3)*\rescale},{(3*\rpage-4)*\rescale}) -- ({(3*\rpage-3)*\rescale},{-\abajo*\rescale+\margen}) node[black,anchor=north] {$\scriptscriptstyle 3r-3$};
	
	\node[black,anchor=north] at ({(\rpage-1)*\rescale},{-\abajo*\rescale+\margen}) {$\scriptscriptstyle r-1$};

%
	
%
	
	
	\draw [->] (0,0)  -- ({(3*\rpage -3 + \derecha)*\rescale},0) node[anchor=north] {$\scriptstyle s$};
	
	
	\draw [->] (0,-{\abajo*\rescale}) -- (0,{(3*\rpage -3 +\arriba)*\rescale})node[anchor=east] {$\scriptstyle t$};

	\end{tikzpicture}
	\caption{Terms in page $r+1$ defined as the homology of $d_r$ are depicted in darker red and blue ($r=\arabic{rpage}$).}\label{mrd2}
\end{figure}

%

We construct the extended spectral sequence from the following simple observation: the inclusion $\opnot A_r\subset\opnot A_{r+s}$ is a principal cofibration. More precisely, it is possible to construct the DG-operad $\opnot A_{r+s}$ directly from $\opnot A_r$ for $0\leq s\leq r$ via a cofiber sequence 
$$L_{\opnot A_m}\Sigma^{-1}\opnot B_{m,r,s}\To\opnot A_r\into \opnot A_{r+s}$$
where, on the left, we have the DG-operad under $\opnot A_m$ defined by the desuspension of a certain $\opnot A_m$-module $\opnot B_{m,r,s}$, $s\leq m\leq r$, (linear in the sense of \cite{baues_cohomology_1997}, or inifinitesimal $\opnot A_m$-bimodule in the terminology of \cite{merkulov_deformation_2009}). We think this should be a general phenomenon, available for the minimal resolution of any quadratic Koszul operad, even symmetric, but we have chosen to concentrate efforts in this particular case.

Bousfield  \cite{bousfield_homotopy_1989} also defined an extension of the fringed spectral sequence when the tower of fibrations comes from a cosimplicial space, and obstructions therein. We explored the possibility of using Bousfield's theory in this paper, but it was uncertain whether it was possible at all, and even if it were possible our approach seemed to be more suitable for computations. Our extension goes further beyond Bousfield's. We believe this is just due to the particular features of the model category of DG-operads. In fact, we conjecture that it is possible to obtain versions of our results in the category of operads of spaces, and that Bousfield's range of extension is optimal in that case. We also think that Bousfield's extension could be conceptually explained from the results recently obtained by Mathew and Stojanoska \cite{mathew_fibers_2016} on the homotopy fibers of the bonding maps of the tower of fibrations of a cosimplicial space.

Angeltveit's previously mentioned spectral sequence \cite{angeltveit_topological_2008} is of a different nature. It is not related to the homotopy groups of $\rmap(\opnot A_\infty,\eop{X})$, but to the topological Hochschild cohomology of an $A$-infinity ring spectrum, so it is a spectral sequence of abelian groups defined on the whole right half plane. 

Finally, we also consider the Bousfield--Kan spectral sequence of the following tower, closely related to the former,
\begin{equation*}\label{tms}
\cdots\r \abs{w\ans{n+1}}\longrightarrow \abs{w\ans{n}}\r\cdots.
\end{equation*}
Here, for any DG-operad $\opnot O$, $\algebra{\opnot O}$ is the model category of $\opnot O$-algebras, $n\geq 1$, $w\algebra{\opnot O}$ is the full subcategory of weak equivalences (quasi-isomorphisms), and $\abs{w\algebra{\opnot O}}$ is its geometric realization (nerve), which is also known as the \emph{topological moduli space of $\opnot O$-algebras}. The bonding maps are induced by the forgetful functors $\ans{n+1}\r \ans{n}$. These spaces may have a proper class of connected components, unless we work with universes, but each of them is always homotopically small, i.e.~weakly equivalent to an honest space. In particular homotopy groups are well defined. Angeltveit also considered this spectral sequence for for ring spectra \cite{angeltveit_uniqueness_2011}. He related its first two pages to the spectral sequence for the computation of topological Hochschild homology, where he placed obstructions.

The homotopy limit of this tower of fibrations is $\abs{w\ans{\infty}}$ \cite[Corollary 4.10]{muro_moduli_2014}. In particular, an $A$-infinity algebra $B$ defines a base point in the tower and in its homotopy limit. We prove that the $E_2$ terms of the associated spectral sequence are 
\[E_2^{st}=\left\{
\begin{array}{ll}
\hh{s+1}{1-t}{H^*B},&s>0;\\
\der^{1-t}(H^*B),&s=0, t>1;\\
\aut(H^*B),&(s,t)=(0,1);\\
\left\{\!\!\!\begin{array}{l}\text{iso.~classes of graded}\\\text{vector spaces}\end{array}\!\!\!\right\}, &(s,t)=(0,0).
\end{array}
\right.\]
Here $\der^n(H^*B)$ denotes the degree $n$ derivations of the graded algebra $H^*B$ and $\aut(H^*B)$ is its automorphism group. 
Note that $E_2^{01}$ is in general a non-abelian group, and $E_2^{00}$ is a set pointed at $H^*B$. We also compute the differential $d_2$ where defined. In particular we show that, for $s\geq 2$ and $t\in\mathbb Z$ and for $s=1$ and $t\geq 3$, it is given by the Gerstenhaber bracket with the universal Massey product of a minimal model $d_2=\pm [\{m_3\},-]$, as above. This spectral sequence is related to the homotopy groups of $\abs{w\ans{\infty}}$ based at $B$.

We work all the time over an arbitrary commutative ground ring $\Bbbk$. We have restricted to fields in the introduction in order to avoid projectivity hypotheses ensuring the existence of minimal models. Although the introduction has been completely written with cohomological degrees, it would be complicated to restrict to just chain or cochain complexes in the body of the paper. Hochschild cohomology has of course a cohomological grading, while the operad $\opnot A_\infty$ is naturally  homological since it arises as the cellular chain complex of associahedra. We use mostly, but not exclusively, \emph{chain complexes}, i.e.~complexes $X$ with differentials of degree $-1$, and we indicate the \emph{degree} of a homogeneous component of a graded $\Bbbk$-module in the subscript $X_n$. The degree of a homogeneous element $x$ will be denoted by $\abs{x}$. We sometimes change to cohomological degrees by reversing signs $X^{n}=X_{-n}$. This also changes the sign of the degree of differentials. All operads will be non-symmetric. Graded means $\mathbb Z$-graded.  

The paper consists of seven sections, excluding this introduction. The first two sections contain mostly background on operads, their modules, and Hochschild cohomology. In particular we fix all necessary sign conventions. 
The third section studies the $A$-infinity operad and its cellular filtration. In the next two sections we construct the extended (truncated) spectral sequence of a tower of fibrations under appropriate assumptions. This is applied in the sixth section to the tower of mapping spaces. In the final section we consider the tower of moduli spaces.

\section{Algebraic background on operads and Hochschild cohomology}\label{linear}\label{principio}

In this section we review the algebraic structure of graded and DG-operads and their modules, including underlying brace and Lie algebra structures, and also Gerstenhaber algebra structures in the presence of a multiplication. Most of this is known but we have been unable to find a reference which gets all signs straight. Many papers either deal with the ungraded case in an explicit way, or with the graded case in a rather abstract manner, but we need everything graded and explicit for our later computations. In addition we deal with the non-linear Gerstehaber square operation as part of the structure, not just in an ad hoc way. This operation is usually avoided, as it cannot be encoded in the action of an operad, or subsumed into the Lie bracket by working over fields of characteristic $0$.

A \emph{graded operad} or \emph{DG-operad} $\mathcal O$ is a sequence of graded modules or chain complexes $\{\mathcal O(n)\}_{n\geq 0}$, where $\opnot O(n)$ is called the \emph{arity} $n$ component, equipped with \emph{operadic compositions},
\[\circ_i\colon \mathcal O(p)\otimes\mathcal O(q)\To\mathcal O(p+q-1),\quad 1\leq i\leq p,\;\; q\geq 0, 
\]
and an \emph{identity} element $\id{}=\id{\mathcal O}\in\mathcal O(1)$
satisfying 
\begin{align*}
	x\circ_i(y\circ_{j}z)&=(x\circ_iy)\circ_{i+j-1}z;\\
	(x\circ_iy)\circ_jz&=(-1)^{\abs{y}\abs{z}}(x\circ_jz)\circ_{i+n-1}y,\quad j<i,\quad z\in\mathcal O(n);\\
	\id{}\circ_1x&=x=x\circ_i\id{}.
\end{align*}
In the DG-case, operadic compositions being chain maps translates in the \emph{operadic Leibniz rule},
\begin{align*}
	d(x\circ_iy)&=d(x)\circ_iy+(-1)^{\abs{x}}x\circ_id(y),
\end{align*}
which implies $d(\id{})=0$. 
The differential of $\opnot O$ is sometimes denoted by $d_{\opnot O}$ in order to avoid confusion. Plainly graded algebraic structures will often be regarded as differential graded structures endowed with the trivial differential.

Given a graded or DG-operad $\mathcal O$, a \emph{graded} or \emph{DG-$\mathcal O$-module} $M$ in the sense of \cite[Definition 1.4]{markl_models_1996} is a sequence of graded modules $\{M(n)\}_{n\geq 0}$ equipped with compositions, 
\[\circ_i\colon M(p)\otimes \mathcal O(q)\To M(p+q-1), \, \circ_i\colon\mathcal O(p)\otimes M(q)\To M(p+q-1), \, 1\leq i\leq p,  \, q\geq 0,
\]
satisfying the same laws as operads when one of the variables is in $M$ and the rest in $\mathcal O$. This is the same as a linear $\opnot O$-module \cite[Definition 2.13]{baues_cohomology_1997} or an infinitesimal $\opnot O$-bimodule \cite[\S3.1]{merkulov_deformation_2009}. Any graded operad is a module over itself and restriction of scalars along graded operad maps is defined in the obvious way. The category of $\opnot O$-modules is graded abelian.

A \emph{sequence of graded sets} can be regarded as a set $S$ equipped with a map $S\r\mathbb N\times\mathbb Z\colon x\mapsto (\text{\emph{arity} of }x,\abs{x})$, where $\abs{x}$ is the \emph{degree} of $x$. We denote by $\opnot F(S)$ the \emph{free graded operad} on $S$, 
and by $\langle S\rangle_{\opnot O}$ the \emph{free graded $\opnot O$-module} on $S$, which satisfy the obvious universal properties. Actually, $\langle S\rangle_{\opnot O}\subset \opnot O\amalg\opnot F(S)$  is the sub-$\opnot O$-module generated by $S$.

In order to simplify formulas, we will later use the following algebraic structures which underly operads.

A \emph{graded} or \emph{DG-brace algebra} is a graded module or chain complex $B$ equipped with  maps called \emph{braces}, $n\geq 1$,
	\begin{align*}
	B^{\otimes (n+1)}&\To B,\\
	x_{0}\otimes x_{1}\otimes\cdots\otimes x_{n}&\;\mapsto \; x_{0}\{x_{1},\dots,x_{n}\},
	\end{align*}
	satisfying the \emph{brace relation}, 
	\begin{align*}
	x\{y_{1},\dots,y_{p}\}\{z_{1},\dots, z_{q}\}=\hspace{-25pt}\sum_{0\leq i_{1}\leq j_{1}\leq\cdots\leq i_{p}\leq j_{p}\leq q}\hspace{-25pt}(-1)^{\epsilon}x\{
	z_{1},&\dots, z_{i_{1}},
	y_{1}\{z_{i_{1}+1},\dots, z_{j_{1}}\},
	\dots\\
	&\dots,
	y_{p}\{z_{i_{p}+1},\dots, z_{j_{p}}\},
	z_{j_{p}+1},\dots, z_{q}
	\}.
	\end{align*}
	The sign is simply determined by the Koszul sign rule, $\epsilon=\sum_{k=1}^{p}\sum_{l=1}^{i_{k}} \abs{y_{k}}\abs{z_{l}}$. In the DG-case, the fact that braces are chain maps is equivalent to the \emph{brace Leibniz rule},
	\begin{align*}
	d(x_{0}\{x_{1},\dots,x_{n}\})&=d(x_{0})\{x_{1},\dots,x_{n}\}+\sum_{i=1}^{n}(-1)^{\sum\limits_{j=0}^{i-1}\abs{x_{i}}}x_{0}\{x_{1},\dots,d(x_{i}),\dots ,x_{n}\}.
	\end{align*}
	A \emph{graded} or \emph{DG-brace $B$-module} $M$ is similarly defined in terms of braces, allowing  one $x_i\in M$ (and the rest in $B$). The outcome should also be in $M$. In the brace relation for brace $B$-modules we allow one out $x$, the $y_i$'s, and the $z_j$'s to be in $M$.
	
	Any graded or DG-brace algebra $B$ has an underlying graded or DG-Lie algebra structure with Lie bracket defined by
	\[[x,y]=x\{y\}-(-1)^{\abs{x}\abs{y}}y\{x\}.\]
	For $\abs{x}$ odd, or for all $x$ if $2=0\in\Bbbk$, the following equation holds,
	\begin{equation}\label{square}
	[x\{x\},y]=[x,[x,y]].
	\end{equation}
	The same formula defines a graded or DG-Lie $B$-module structure on any graded or DG-brace $B$-module $M$, and the last equation holds for $x\in B$ and $y\in M$.

	If $\mathcal O$ is a graded or DG-operad, then $\oplus_{n\geq 0}\mathcal O(n)$ 
	is a graded or DG-brace algebra with the following structure, $x_i\in\opnot O(j_i)$, $0\leq i\leq n$,
	\begin{align*}\label{braceop}
	x_{0}\{x_{1},\dots,x_{n}\}
	&=\hspace{-20pt}\sum_{1\leq i_1<i_2<\dots<i_n\leq j_0}\hspace{-20pt}(\cdots(( x_0\circ_{i_1}x_{1})\circ_{i_{2}+j_1-1}x_{2})\cdots)\circ_{i_n+j_1+\cdots+j_{n-1}-(n-1)}x_n.
	\end{align*}
Note that 
\begin{equation}\label{vanishing}
x_{0}\{x_{1},\dots,x_{n}\}=0,\qquad n>j_{0},
\end{equation}
since  the previous summation is empty in this case. Observe also that 
	\begin{align*}
	\opnot O(j_0)_{d_0}\otimes\cdots\otimes \opnot O(j_n)_{d_n}&\To \opnot O(j_0+\cdots+j_n-n)_{d_0+\cdots+d_n},\\
	x_{0}\otimes x_{1}\otimes\cdots\otimes x_{n}&\;\mapsto \; x_{0}\{x_{1},\dots,x_{n}\},
	\end{align*}
i.e.~braces in $\oplus_{n\geq 0}\mathcal O(n)$ are degree-homogeneous, as required in any brace algebra, but if we regard the arity as another degree, then it has degree $-n$ with respect to it. Concerning signs in the brace relation, arity does not play any role. As a consequence, the induced Lie bracket satisfies
\[
[-,-]\colon\opnot O(p)_d\otimes\opnot O(q)_e\To\opnot O(p+q-1)_{d+e},
\]
and $[x,y]=0$ if $x,y\in\opnot O(0)$. The brace algebra structure on $\bigoplus_{n\geq 0}\mathcal O(n)$ extends to $\prod_{n\geq 0}\mathcal O(n)$, completing with respect to the arity filtration. If $M$ is a graded or DG-$\opnot O$-module, the previous formula also endows $\bigoplus_{n\geq 0} M(n)$ and $\prod_{n\geq 0} M(n)$ with a graded or DG-brace module structures over $\bigoplus_{n\geq 0}\mathcal O(n)$ or $\prod_{n\geq 0}\mathcal O(n)$, respectively.
	
	A \emph{degree $-1$ multiplication} on a graded operad $\opnot O$ is an element $m_2\in\opnot O(2)_{-1}$ satisfying $m_2\{m_2\}=0$. By \eqref{square},
\[[m_2,-]\colon \opnot O(n)_d\To \opnot O(n+1)_{d-1}\] 
is a differential in $\oplus_{n\geq 0}\mathcal O(n)$. Moreover,  by the Jacobi identity $\oplus_{n\geq 0}\mathcal O(n)$ equipped with this differential is actually a DG-Lie algebra. It is not however a DG-brace algebra. Actually the failure in being a DG-brace algebra yields relations in homology between the Lie bracket and the additional operation we now define.

The \emph{cup product} 
\[\smile\colon \opnot O(p)_d\otimes\opnot O(q)_e\To\opnot O(p+q)_{d+e-1}\]
in $\oplus_{n\geq 0}\mathcal O(n)$, defined by
	\[x\smile y=(-1)^{\abs{x}}m_2\{x,y\},\]
is associative since, by the brace relation,
\[0= m_2\{ m_2\}\{x,y,z\}= m_2\{ m_2\{x,y\},z\}+(-1)^{\abs{x}} m_2\{x, m_2\{y,z\}\}.\]
The rest of terms appearing in the brace relation vanish by \eqref{vanishing}, since $m_2$ has arity $2$. 
This product yields a differential graded algebra structure on $\oplus_{n\geq 0}\mathcal O(n)$ if we shift degrees by $-1$. The Leibniz rule 
\begin{align}\label{loibniz}
[m_2,x\smile y]&=[m_2,x]\smile y+(-1)^{\abs{x}-1}x\smile[m_2,y]
\end{align}
follows straightforwardly from the brace relation and from the identity $m_2\{m_2\}=0$.

The homology of $\oplus_{n\geq 0}\mathcal O(n)$ is therefore a graded Lie algebra, and also a graded associative algebra with respect to the degree shifted by $-1$. The associative algebra structure is actually commutative in the graded sense, 
\[x\smile y=(-1)^{(\abs{x}-1)(\abs{y}-1)}y\smile x.\]
This holds since, at the level of chains, the brace relation yields
\begin{equation}\label{comm}
\begin{array}{l}
x\smile y-(-1)^{(\abs{x}-1)(\abs{y}-1)}y\smile x\\\qquad\qquad\qquad=-(-1)^{\abs{x}}([m_2,x\{y\}]-[m_2,x]\{y\}-(-1)^{\abs{x}}x\{[m_2,y]\}).
\end{array}
\end{equation}

Both algebraic structures in the homology of $\oplus_{n\geq 0}\mathcal O(n)$ are related by the following derivation equation
\[
[x,y\smile z]=[x,y]\smile z+(-1)^{\abs{x}(\abs{y}-1)}y\smile[x,z].
\]
This follows from the fact that, on chains, the brace relation gives rise to
\[\begin{array}{l}
[x,y\smile z]-[x,y]\smile z-(-1)^{\abs{x}(\abs{y}-1)}y\smile[x,z]\\
\qquad\qquad\qquad\qquad\qquad\qquad=(-1)^{\abs{x}+\abs{y}}([m_2,x\{y,z\}]-[m_2,x]\{y,z\}\\
\quad\,\qquad\qquad\qquad\qquad\qquad\qquad-(-1)^{\abs{x}}x\{[m_2,y],z\}-(-1)^{\abs{x}+\abs{y}}x\{y,[m_2,z]\}).
\end{array}
\]

For $d$ odd, or for all $d$ if $2=0\in \Bbbk$, the brace squaring quadratic operation
\[\gsquare\colon\opnot O(n)_d\To\opnot O(2n-1)_{2d},\qquad \gsquare(x)=x\{x\},\]
passes to homology, where it is called \emph{Gerstenhaber square}.  
This can be checked by using \eqref{comm} and \eqref{loibniz}. Even at the level of chains, it satisfies
\begin{align*}
\gsquare(x+y)&=\gsquare(x)+\gsquare(y)+[x,y],\\
[\gsquare(x),y]&=[x,[x,y]].
\end{align*}
Here we use \eqref{square} for the second equation. Note that the Gerstenhaber square vanishes in arity $0$ by \eqref{vanishing}. The relation between the Gerstenhaber sequare and the cup product is given by the following formula in homology, which holds if $\abs{x}$ and $\abs{y}$ are odd, or if $2=0\in \Bbbk$,
\[\gsquare(x\smile y)=\gsquare(x)\smile y^2+x\smile [x,y]\smile y+x^2\smile \gsquare(y).\]
This formula is a consequence of the following equation at the level of chains,
\begin{equation*}\label{aberrante}
\begin{array}{l}
\gsquare(x\smile y)-\gsquare(x)\smile y^2-x\smile [x,y]\smile y-x^2\smile \gsquare(y)\\
\qquad=[m_{2},x\smile (y\{x,y\})]+x\smile (y\{[m_{2},x],y\})-x\smile (y\{x,[m_{2},y]\})\\
\qquad\quad\,+[m_{2},(x\{x,y\})\smile y]+(x\{[m_{2},x],y\})\smile y-(x\{x,[m_{2},y]\})\smile y\\
\qquad\quad\,+[m_{2},(x\{x\})\smile (y\{y\})]+(x\{[m_{2},x]\})\smile (y\{y\})-(x\{x\})\smile (y\{[m_{2},y]\})\\
\qquad\quad\,-[m_{2},x\smile y]\{x,y\}.
\end{array}
\end{equation*}

The most prominent example of operad with degree $-1$ multiplication is the endomorphism operad of the suspension of a graded associative algebra. Recall that the \emph{endomorphism operad} $\eop{X}$ of a chain complex $X$ is given by
\[\eop{X}(n)=\hom(X^{\otimes n},X),\]
where $\hom$ stands for the internal morphism object in the category of chain complexes. The composition product $\circ_i$ is given by the composition of multilinear maps at the $i^{\text{th}}$ slot and the operadic identity is the identity map. The \emph{interval} $I$ is the chain complex with underlying free graded module generated by $i_0$, $i_1$, and $\sigma$ of degrees $\abs{i_0}=\abs{i_1}=0$ and $\abs{\sigma}=1$ and differential $d(\sigma)=i_0-i_1$. The \emph{cylinder} of $X$ is $IX=I\otimes X$. It comes equipped with \emph{inclusions} $i_0,i_1\colon X\rightarrow IX$ and a projection $p\colon IX\rightarrow X$ defined by $pi_0=1=pi_1$ and $p\sigma=0$.  
The \emph{cone} $CX$ is $IX/i_1X$ and the \emph{suspension} $\Sigma X$ is $CX/i_0X$. These constructions are homotopically meaningful when $X$ is cofibrant. If $A$ is a graded associative algebra, the \emph{shifted multiplication} 
\[m_2\colon \Sigma A\otimes\Sigma A\To\Sigma A\]
is the map defined by
\[m_2(\sigma x\otimes\sigma y)= (-1)^{\abs{x}+1}\sigma(x\cdot y).\]
This map $m_2\in \eop{\Sigma A}(2)_{-1}$ is a degree $-1$ multiplication for $\eop{\Sigma A}$. In fact, the equation $m_2\{m_2\}=0$ is equivalent to the associativity of the graded algebra structure. 

The \emph{Hochschild complex} $\hc{p}{q}{A}$  of $A$ is the $(\mathbb N\times\mathbb Z)$-graded complex (Hochschild degree, internal degree) given by the endomorphism operad with the following alternative grading
\[\hc{p}{q}{A}=\eop{\Sigma A}(p)_{-p-q+1}.\]
Note that the differential has bidegree $(1,0)$, hence the total complex is of cohomological type. The \emph{Hochschild cohomology}
\[\hh{p}{q}{A}\]
endowed with the cup product is a graded commutative algebra with respect to the total degree,
\[\smile\colon\hh{p}{q}{A}\otimes \hh{s}{t}{A}\To \hh{p+s}{q+t}{A},\]
and a graded Lie algebra for the total degree shifted by $-1$,
\[[-,-]\colon\hh{p}{q}{A}\otimes \hh{s}{t}{A}\To \hh{p+s-1}{q+t}{A}.\]
The Gerstenhaber square is defined in even total degree, or everywhere if $2=0\in \Bbbk$, 
\[\gsquare\colon \hh{p}{q}{A}\To \hh{2p-1}{2q}{A}.\]
Note that both the Lie bracket and the Gerstenhaber square vanish on Hochschild degree $0$ for obvious reasons. 
Summing up, if we denote the total degree of $x\in \hh{p}{q}{A}$ by $\abs{x}=p+q$, then the following equations hold in Hochschild cohomology,
\begin{align*}
(x\smile y)\smile z&=x\smile (y\smile z),\\
x\smile y&=(-1)^{\abs{x}\abs{y}}y\smile x,\\
[x,y]&=-(-1)^{(\abs{x}-1)(\abs{y}-1)}[y,x],\\
[x,x]&=0, \qquad \abs{x}\text{ odd},\\
[x,[y,z]]&=[[x,y],z]+(-1)^{(\abs{x}-1)(\abs{y}-1)}[y,[x,z]],\\
[x,[x,x]]&=0, \qquad\abs{x}\text{ even},\\
[x,y\smile z]&=[x,y]\smile z+(-1)^{(\abs{x}-1)\abs{y}}y\smile[x,z],\\
\gsquare(x+y)&=\gsquare(x)+\gsquare(y)+[x,y], \qquad \abs{x}, \abs{y}\text{ even or }2=0\in\Bbbk,\\
\gsquare(x\smile y)&=\gsquare(x)\smile y^2+x\smile[x,y]\smile y+x^2\smile\gsquare(y),  \qquad \text{idem},\\
[\gsquare(x),y]&=[x,[x,y]],  \qquad \abs{x}\text{ even or }2=0\in\Bbbk.
\end{align*}

Other sources use operads $\opnot O$ with degree $0$ multiplication and the endomorphism operad $\eop{A}$ of an associative algebra $A$, rather than its suspension. The great disadvantage of that approach is that it is necessary to introduce complicated signs in order to consider the appropriate brace algebra structure on $\oplus_{n\geq 0}\opnot O(n)$. Nevertheless, both approaches are equivalent via the \emph{operadic suspension} $\Lambda\colon\dgoperad\r\dgoperad$, which is an automorphism of the category of DG-operads satisfying $\Lambda\eop{X}\cong\eop{\Sigma X}$ and  preserving all the homotopical structure that we review in the following section, compare \cite[Definition 2.4 and Remark 2.5]{muro_cylinders_2016}.

\section{Homotopy theory of DG-operads and their modules}\label{homoto}

Let $\opnot O$ be a DG-operad. The category $\bimod{\opnot O}$ of DG-$\opnot O$-modules behaves pretty much as the category of modules over a DG-algebra. It can be endowed with an abelian stable model structure transferred from the projective model structure on chain complexes. Weak equivalences are quasi-isomorphisms and fibrations are surjections, in particular all objects are fibrant. Indeed, there is a monadic adjunction between $\bimod{\opnot O}$ and the category of sequences of chain complexes,
\[\cplx^{\mathbb N}\rightleftarrows\bimod{\opnot O}\]
where the right adjoint is the obvious forgetful functor and the left adjoint is
\[\{X(n)\}_{n\geq 0}\mapsto \Big\{\bigoplus_{\substack{1\leq i\leq p\\q,r_1,\dots,r_q\geq 0\\p-1+r_1+\cdots+r_q=n}}\opnot O(p)\otimes X(q)\otimes \opnot O(r_1)\otimes\cdots\otimes \opnot O(r_q)\Big\}_{n\geq 0},\]
and \cite[Lemma 2.3]{schwede_algebras_2000} applies. This left adjoint takes free graded modules to free graded $\opnot O$-modules. 

A map of DG-$\opnot O$-modules $M\r N$ is \emph{cellular} if, on underlying graded $\opnot O$-modules, it is the inclusion of the first factor of a coproduct $N=M\oplus \langle S\rangle_{\opnot O}$ where the second factor is free on a sequence of graded sets $S$ endowed with a continuous increasing filtration $\{S_\beta\}_{\beta\leq\alpha}$, $\alpha$ an ordinal, such that $d(S_{\beta+1})\subset M\oplus \langle S_\beta\rangle_{\opnot O}$ for all $\beta<\alpha$. We also say that $M$ is \emph{cellular relative to $N$} or \emph{relatively cellular} if $M$ is understood. Cofibrations are retracts of cellular maps. 

The tensor product of a chain complex $X$ and a DG-$\opnot O$-module $M$ is $X\otimes M=\{X\otimes M(n)\}_{n\geq 0}$ with $\opnot O$-action defined as follows, $x\in X$, $y\in M$, $z\in\mathcal O$, 
\begin{align*}
(x\otimes y)\circ_iz&=x\otimes (y\circ_i z),&
z\circ_i (x\circ_iy)&=(-1)^{\abs{x}\abs{z}}x\otimes (z\circ_iy).
\end{align*}
This endows $\bimod{\opnot O}$ with the structure of a $\cplx$-model category in the sense of \cite[Definition 4.2.18]{hovey_model_1999}. 
In particular, the cylinder, cone, and suspension constructions extend to $\bimod{\opnot O}$, and they are homotopically meaningful on cofibrant objects. Moreover, mapping spaces $\rmap_{\bimod{\opnot O}}(M,N)$ in $\bimod{\opnot O}$ are the infinite loop spaces of the $H\Bbbk$-module spectra associated to the corresponding Hom chain complexes $\hom_{\bimod{\opnot O}}(M,N)$ \cite{shipley_h-algebra_2007}, defined by the following adjunction property,
\[\cplx(X,\hom_{\bimod{\opnot O}}(M,N))\cong \bimod{\opnot O}(X\otimes M,N).\]

We also endow the category $\dgoperad$ of DG-operads with the model structure transferred from the projective model structure on chain complexes. This model structure exists by \cite{lyubashenko_homotopy_2011,muro_homotopy_2011} (the homotopy theory of operads goes back to \cite{berger_axiomatic_2003} in the more general symmetric setting, where restrictive assumptions in arity zero are needed). Cofibrations in $\dgoperad$ are the retracts of cellular maps. A map 
$\opnot O\r\opnot P$ is \emph{cellular} if, on underlying graded operads, it is the inclusion of the first factor of a coproduct $\opnot P=\opnot O\amalg \opnot F(S)$ where the second factor is free on a sequence of graded sets $S$ endowed with a continuous increasing filtration $\{S_\beta\}_{\beta\leq\alpha}$ such that $d(S_{\beta+1})\subset \opnot O\amalg \opnot F(S_\beta)$ for all $\beta<\alpha$. We also consider the category $\opnot O\downarrow\dgoperad$ of DG-operads under a fixed DG-operad $\opnot O$ and the category $\opnot O\downarrow\dgoperad\downarrow\opnot O$ of DG-operads over and under $\opnot O$, whose objects are DG-operads equipped with an inclusion $\opnot O\r \opnot P$ and a retraction $\opnot P\r \opnot O$, also called \emph{based} objects. These categories inherit a model structure from $\dgoperad$. The \emph{trivial map} in $\opnot O\downarrow\dgoperad$ from a based object is the composite $\opnot P\r\opnot O\r\opnot Q$.

Any cellular DG-operad $\opnot P$ relative to $\opnot O$ has a relative  \emph{cylinder} $I_{\opnot O}\opnot P$  with underlying graded operad
\begin{equation*}\label{cilindro}
I_{\opnot O}\opnot P=\opnot O\amalg\opnot F(i_0S\amalg \sigma S\amalg i_1S).
\end{equation*}
Here $i_0S$ and $i_1S$ are plain copies of $S$. 
The differential on these generators is determined by the fact that the obvious relative \emph{inclusion maps} $i_{0},i_{1}\colon \opnot P\r I_{\opnot O}\opnot P$ are DG-maps. 
The sequence of graded sets $\sigma S$ is a copy of $S$ with degrees shifted by $+1$. Here the differential has a complicated recursive formula \cite{muro_cylinders_2016}, unlike in the chain complex case, see the following section for some examples. The \emph{projection map} $I_{\opnot O}\opnot P\rightarrow\opnot P$ sends $i_0S$ and $i_1S$ to $S$ and collapses $\sigma S$ to zero. These cylinders are small and homotopically meaningful, but they are not known to be functorial. The relative \emph{torus} $T_{\opnot O}$ is the quotient of the cylinder defined by identifying $i_0S=S=i_1S$. It is an object in $\opnot P\downarrow \dgoperad\downarrow\opnot P$ via the maps induced by the inclusions and the projection of the cylinder. (We did not explicitly consider the torus of a chain complex or DG-$\opnot O$-module $M$ above because it is isomorphic to $M\oplus\Sigma M$.) Tori are useful to compute loop spaces of mapping spaces since we have natural weak equivalences
\[\Omega \rmap_{\opnot O\downarrow\dgoperad}(\opnot P,\opnot U)\simeq\rmap_{\opnot P\downarrow\dgoperad}(T_{\opnot O}\opnot P,\opnot U).\]
If $\opnot P$ is based, the relative \emph{cone} $C_{\opnot O}\opnot P$ is defined by identifying the generators $i_1S$ of $I_{\opnot O}\opnot P$ with their images under $i_1S=S\subset\opnot P\rightarrow \opnot O$. The relative \emph{suspension} $\Sigma_{\opnot O}P$ is obtained by doing this also for $i_0S$.

The forgetful functor $U_{\opnot O}$ from DG-operads under $\opnot O$ to DG-$\opnot O$-modules has a left adjoint $L_{\opnot O}$ by the adjoit functor theorem. They form a Quillen pair
\begin{equation}\label{quillen_adjunction}
\bimod{\opnot O}\mathop{\rightleftarrows}^{L_{\opnot O}}_{U_{\opnot O}}\opnot O\downarrow\dgoperad.
\end{equation}
The left adjoint sends $\langle S\rangle_{\opnot O}$ to $\opnot O\amalg\opnot F(S)$ and the unit in this case is the inclusion of the sub-$\opnot O$-module $\langle S\rangle_{\opnot O}\subset \opnot O\amalg\opnot F(S)$ generated by $S$. In particular $L_{\opnot O}$ preserves cellular objects and the formula for the differential on free generators. Objects in the image of $L_{\opnot O}$ are naturally based since $\bimod{\opnot O}$ has a zero object. The functor $L_{\opnot O}$ also preserves cylinders, cones, and suspensions of cellular objects.

If $\opnot O$ is the initial DG-operad,  relative notions are refered to as \emph{absolute}, or simply omitting the word \emph{relative}, and $\opnot O$ is then dropped from notation.

\section{A decomposition of the $A$-infinity operad}\label{adecomp}

Unlike in the introduction, rather than working with the $A$-infinity DG-operad and its truncations we work with their operadic suspensions \cite[Definition 2.4 and Remark 2.5]{muro_cylinders_2016}, but we keep the same notation so as not to overload the paper.

\begin{defn}\label{am}
The cellular DG-operad $\opnot A_{\infty}$ is, as a graded operad,
\begin{align*}
\opnot A_\infty&=\opnot F(\mu_2,\mu_3,\dots,\mu_n,\dots),
\end{align*}
with 
\begin{align*}
\text{arity of }\mu_{n}&=n,& \abs{\mu_{n}}&=-1, &n\geq 2.
\end{align*}
The differential is defined by the following equation in the graded brace algebra $\prod_{n\geq 1}\opnot A_\infty(n)\ni\mu=(0,\mu_2,\mu_3,\dots,\mu_n,\dots)$,
\[d(\mu)=\mu\{\mu\}.\]
Equivalently, if $n\geq 2$,
\begin{align*}
d(\mu_{n})=\sum_{\substack{p+q=n+1\\p,q\geq 2}}\mu_{p}\{\mu_{q}\}={}&\sum_{\substack{p+q=n+1\\2\leq p<q}}[\mu_{p},\mu_{q}]\\
&+\mu_{\frac{n+1}{2}}\{\mu_{\frac{n+1}{2}}\}\text{ if $n$ is odd}.
\end{align*}
For $m\geq 1$, $\opnot A_m\subset\opnot A_{\infty}$ is the sub-DG-operad with underlying graded operad \[\opnot A_m=\opnot F(\mu_2,\dots,\mu_m).\]
\end{defn}

It is well known that $\opnot A_{\infty}$ is a DG-operad. Neverthess, in order to warm up, we give a direct argument. It is clear that there is a unique degree $-1$ self-map $d=d_{\opnot A_\infty}$ of $\opnot A_{\infty}$ satisfying the operadic Leibniz rule and defined as above. It is a differential since, by the brace relation,
\begin{align*}
d^{2}(\mu)&=d(\mu\{\mu\})\\
&=d(\mu)\{\mu\}-\mu\{d(\mu)\}\\
&=\mu\{\mu\}\{\mu\}-\mu\{\mu\{\mu\}\}\\
&=\mu\{\mu,\mu\}+\mu\{\mu\{\mu\}\}+(-1)^{\abs{\mu}^{2}}\mu\{\mu,\mu\}-\mu\{\mu\{\mu\}\}\\
&=0.
\end{align*}
Clearly, $\opnot A_{\infty}$ is cellular taking  $S_{n}=\{\mu_{2},\dots,\mu_{n}\}$, $n<\omega$, and $S=S_{\omega}=\bigcup_{n<\omega}S_n$.

\begin{defn}\label{i1i}
The DG-$\opnot A_\infty$-module $\opnot B_{\infty,1,\infty}$ is, as a graded object,
\[\opnot B_{\infty,1,\infty}=\langle \bar\mu_2,\bar\mu_3,\dots,\bar\mu_n,\dots\rangle_{\opnot A_\infty}
,\]
where
\begin{align*}
\text{arity of }\bar\mu_{n}&=n,& \abs{\bar\mu_{n}}&=-1, &n\geq 2.
\end{align*}
The differential is defined by the following equation in  the graded brace $\prod_{n\geq 1}\opnot A_\infty(n)$-module $\prod_{n\geq 1}\opnot B_{\infty,1,\infty}(n)\ni\bar \mu=(0,\bar\mu_2,\bar\mu_3,\dots,\bar\mu_n,\dots)$,
\[d_{\opnot B_{\infty,1,\infty}}(\bar\mu)=[\mu,\bar\mu].\]
Equivalently, if $n\geq 2$,
\[d_{\opnot B_{\infty,1,\infty}}(\bar \mu_{n})=\sum_{\substack{p+q=n+1\\p,q\geq 2}}[\mu_{p},\bar\mu_{q}].\]

For $r\geq 1$, we define $\opnot B_{\infty,1,r}\subset \opnot B_{\infty,1,\infty}$ as sub-DG-$\opnot A_\infty$-module with underlying graded sub-$\opnot A_\infty$-module \[\opnot B_{\infty,1,r}=\langle\bar \mu_{2},\dots,\bar\mu_{r}\rangle_{\opnot A_\infty}.\] The graded $\opnot A_\infty$-module underlying the quotient $\opnot B_{\infty,r,\infty}=\opnot B_{\infty,1,\infty}/\opnot B_{\infty,1,r}$ is \[\opnot B_{\infty,r,\infty}=\langle\bar \mu_{r+1},\bar\mu_{r+2},\dots,\bar\mu_{n},\dots\rangle_{\opnot A_\infty}\] and its differential is given by the following formula, $n>r$,
\[d_{\opnot B_{\infty,r,\infty}}(\bar \mu_{n})=\sum_{\substack{p+q=n+1\\p\geq 2\\q>r}}[\mu_{p},\bar\mu_{q}].\]
For $m\geq s\geq 0$ we consider the sub-DG-$\opnot A_m$-module $\opnot B_{m,r,s}\subset \opnot B_{\infty,r,\infty}$ with underlying graded sub-$\opnot A_m$-module
\[\opnot B_{m,r,s}=\langle\bar \mu_{r+1},\dots,\bar\mu_{r+s}\rangle_{\opnot A_{m}}.\] 
\end{defn}

The DG-$\opnot A_\infty$-module $\opnot B_{\infty,1,\infty}$ is well defined. Indeed, there is a unique degree $-1$ endomorphism $d=d_{\opnot B_{\infty,1,\infty}}$ of $\opnot B_{\infty,1,\infty}$ satisfying the operadic Leibniz rule and defined as above. Moreover, it is a differential since, 
\begin{align*}
d^{2}(\bar\mu)&=d([\mu,\bar\mu])\\
&=[d(\mu),\bar \mu]+(-1)^{\abs{\mu}}[\mu,d(\bar\mu)]\\
&=[\mu\{\mu\},\bar\mu]-[\mu,[\mu,\bar\mu]]\\
&=0.
\end{align*}
Here we use \eqref{square} and that $\abs{\mu}$ is odd. We should remark that $\opnot B_{m,r,s}$ is indeed a sub-$\opnot A_m$-module of $\opnot B_{\infty,r,\infty}$ since, for $r<n\leq r+s$, the index $p$  in the summation defining $d_{\opnot B_{\infty,r,\infty}}(\bar\mu_{n})$ satisfies 
\begin{equation}\label{inequalities}
	p=n+1-q<r+s+1-r=s+1\leq m+1,
\end{equation} 
i.e.~$p\leq m$. Note also that all these operadic modules are cellular.


\begin{prop}\label{hcs1}
	For $r\geq 1$ and $0\leq s\leq m\leq r$ there is a homotopy cofiber sequence in $\opnot A_{m}\downarrow\dgoperad$
	$$\xymatrix{L_{\opnot A_m}\Sigma^{-1}\opnot B_{m,r,s}\ar[r]^-{f_{m,r,s}}&\opnot A_{r}\ar@{>->}[r]^-{\text{incl.}}&\opnot A_{r+s}}$$
	where, for $r< n\leq r+s$,
	\begin{align*}
	f_{m,r,s}(\sigma^{-1}\bar \mu_{n})=\sum_{\substack{p+q=n+1\\2\leq p,q\leq r}}\mu_{p}\{\mu_{q}\}={}&\sum_{\substack{p+q=n+1\\2\leq p<q\leq r}}[\mu_{p},\mu_{q}]\\
	&+\mu_{\frac{n+1}{2}}\{\mu_{\frac{n+1}{2}}\}\text{ if $n$ is odd}.
	\end{align*}
\end{prop}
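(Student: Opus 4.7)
The plan is to invoke Lemma \ref{decomplin} with $\opnot O = \opnot A_m$, $\opnot P = \opnot A_r$, $\opnot Q = \opnot A_{r+s}$, and $S = \{\mu_{r+1}, \dots, \mu_{r+s}\}$. The inclusion $\opnot A_r \subset \opnot A_{r+s}$ is cellular by construction, $\Sigma_{\opnot A_m}^{-1} \opnot B_{m,r,s}$ is cofibrant as a suspension of a cellular object, and identifying $(\opnot A_{r+s})_{\opnot A_r}$ with $\opnot B_{m,r,s}$ as strictly linear DG-operads relative to $\opnot A_m$ will produce the desired cofiber sequence with the correct connecting map.

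First I would verify the splitting hypothesis of Lemma \ref{decomplin}: that the differential $d(\mu_n) = \sum_{p+q=n+1,\,p,q\geq 2} \mu_p\{\mu_q\}$ for $r < n \leq r+s$ takes values in $\opnot A_r \oplus \langle S \rangle_{\opnot A_m}$. Each summand lies in $\opnot A_r$ when $p,q \leq r$ (this gives $d_0$), and in $\langle S \rangle_{\opnot A_m}$ when exactly one of $p,q$ exceeds $r$, because then inequality \eqref{inequalities} forces the other index to be at most $s \leq m$. The case both $p,q > r$ is ruled out: it would require $p+q \geq 2r+2$, contradicting $p+q = n+1 \leq r+s+1 \leq 2r+1$ via $s \leq r$.

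Next I would identify $(\opnot A_{r+s})_{\opnot A_r}$ with $\opnot B_{m,r,s}$ through the relabeling $\mu_n \leftrightarrow \bar\mu_n$ for $r < n \leq r+s$, so the task reduces to matching $d_1(\mu_n)$ with $d_{\opnot B_{m,r,s}}(\bar\mu_n)$. Splitting $d_1$ according to whether $p > r$ or $q > r$, reindexing one of the resulting sums, and using that both $\mu_p$ and $\bar\mu_q$ have degree $-1$ (so $[\mu_p, \bar\mu_q] = \mu_p\{\bar\mu_q\} + \bar\mu_q\{\mu_p\}$), the two pieces combine into $\sum_{p+q=n+1,\,p \geq 2,\,q > r} [\mu_p, \bar\mu_q]$, matching the definition. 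Lemma \ref{decomplin} then yields the homotopy cofiber sequence with $f_{m,r,s}(\sigma^{-1} \bar\mu_n) = d_0(\mu_n)$ equal to the formula in the statement; the Lie-bracket rewrite follows by pairing $p \neq q$ terms and isolating the diagonal $\mu_{(n+1)/2}\{\mu_{(n+1)/2}\}$ for $n$ odd.

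The main delicate point is the sign bookkeeping in Step~2: the two brace terms appearing in the mixed-case splitting of $d_1$ must assemble into a commutator with a plus sign, which relies crucially on both $\mu_p$ and $\bar\mu_q$ having odd degree so that $-(-1)^{|\mu_p||\bar\mu_q|} = +1$ in the definition of the bracket. All other steps are elementary verifications of index inequalities and compatibility under the free-operad bijection.
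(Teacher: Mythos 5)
Your proof is correct and follows the same route as the paper: the paper likewise deduces Proposition \ref{hcs1} by applying Lemma \ref{decomplin} to $\opnot A_m\subset\opnot A_r\subset\opnot A_{r+s}$, using inequality \eqref{inequalities} (together with $s\leq r$ ruling out the case $p,q>r$) to verify the splitting hypothesis, and identifying $\bar\mu_n$ with $\mu_n$ to recognize the mapping cone as $\opnot A_{r+s}$. Your verification is just a more explicit write-up of the paper's two-sentence justification.
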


\begin{proof}
This means that the third object in the sequence is the mapping cone of the first arrow. It follows from the fact that, since $s\leq r$, then for $r< n\leq r+s$ the differential of $\mu_n\in\opnot A_{r+s}$ decomposes in two summands,
\[d(\mu_n)=\sum_{\substack{p+q=n+1\\2\leq p,q\leq r}}\mu_{p}\{\mu_{q}\}
+\sum_{\substack{p+q=n+1\\p\geq 2\\q>r}}[\mu_{p},\mu_{q}],\]
where the first one lies completely in $\opnot A_r$ and, in the second one, $r<q\leq r+s$ and $p\leq m$, see \eqref{inequalities}. 
\end{proof}

A similar argument proves the following proposition.

\begin{prop}\label{hcs2}
	For $r\geq 1$, ${{s}},t\geq 0$ and  $m\geq s+t$,  there is a homotopy cofiber sequence in $\bimod{\opnot A_{m}}$
	$$\xymatrix@C=30pt{\Sigma^{-1}\opnot B_{m,r+s,t}\ar[r]^-{g_{m,r,s,t}}&\opnot B_{m,r,s}\ar@{>->}[r]^-{\text{incl.}}&\opnot B_{m,r,s+t}}$$
	where, for $r+{{s}}<n\leq r+{{s}}+t$,
	\begin{align*}
	g_{m,r,s,t}(\sigma^{-1}\bar\mu_{n})&=
	\sum_{
		\substack{p+q=n+1\\p\geq 2\\r<q\leq r+{{s}}}
	} 
	[\mu_{p},\bar\mu_{q}].
	\end{align*}
\end{prop}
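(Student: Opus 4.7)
The plan is to deduce Proposition \ref{hcs2} directly from Lemma \ref{decomplin} applied to the chain $\opnot A_m\subset\opnot B_{m,r,s}\subset\opnot B_{m,r,s+t}$, in close analogy with how Proposition \ref{hcs1} is obtained from the same lemma applied to $\opnot A_m\subset\opnot A_r\subset\opnot A_{r+s}$. As graded operads, $\opnot B_{m,r,s+t}=\opnot B_{m,r,s}\amalg\opnot F(S)$ with $S=\{\bar\mu_{r+s+1},\dots,\bar\mu_{r+s+t}\}$, so the outer inclusion is cellular on $S$ as required by the lemma.

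The only nontrivial verification is that the differential of $\opnot B_{m,r,s+t}$ on $S$ (co)restricts to $\opnot B_{m,r,s}\oplus\langle S\rangle_{\opnot A_m}$, rather than merely to $\opnot B_{m,r,s}\oplus\langle S\rangle_{\opnot B_{m,r,s}}$. Starting from the defining formula
\[
d(\bar\mu_n)=\sum_{\substack{p+q=n+1\\ p\geq 2,\,q>r}}[\mu_p,\bar\mu_q],\qquad r+s<n\leq r+s+t,
\]
I would split the sum at $q=r+s$: the terms with $r<q\leq r+s$ form the candidate $d_0(\bar\mu_n)\in\opnot B_{m,r,s}$ and the terms with $r+s<q\leq n-1$ form $d_1(\bar\mu_n)$. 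Each individual bracket $[\mu_p,\bar\mu_q]$ lies in the free $\opnot A_m$-submodule $\langle\bar\mu_q\rangle_{\opnot A_m}$ generated by the single operation $\bar\mu_q$, because $[\mu_p,\bar\mu_q]=\mu_p\{\bar\mu_q\}+\bar\mu_q\{\mu_p\}$ expands as a sum of insertions of $\mu_p\in\opnot A_m$ into $\bar\mu_q$ or vice versa. The hypothesis $m\geq s+t$ enters at exactly the right place to ensure $\mu_p\in\opnot A_m$: from $p+q=n+1\leq r+s+t+1$ and $q>r$ one gets $p\leq s+t\leq m$.

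With the hypotheses verified, Lemma \ref{decomplin} furnishes a unique strictly linear DG-operad $(\opnot B_{m,r,s+t})_{\opnot B_{m,r,s}}$ with underlying graded operad $\opnot A_m\amalg\opnot F(S)$ and differential $d_1$ on $S$, together with a cofiber sequence
\[
\Sigma^{-1}_{\opnot A_m}(\opnot B_{m,r,s+t})_{\opnot B_{m,r,s}}\To\opnot B_{m,r,s}\hookrightarrow\opnot B_{m,r,s+t}
\]
whose connecting map sends $\sigma^{-1}\bar\mu_n$ to $d_0(\bar\mu_n)$. A direct comparison of $d_1$ with the defining formula for the differential of $\opnot B_{\infty,r+s,\infty}$ identifies the middle object with $\opnot B_{m,r+s,t}$, and the connecting map is $g_{m,r,s,t}$ by inspection. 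Finally, all three DG-operads are strictly linear relative to $\opnot A_m$ and therefore canonically based over $\opnot A_m$, while $g_{m,r,s,t}$ and the inclusion both send generators into free $\opnot A_m$-submodules spanned by the $\bar\mu_i$'s, so both maps are strictly linear in $\opnot A_m\downarrow\dgoperad\downarrow\opnot A_m$. The main obstacle is the decomposition of the differential described above; once the counting inequality is in place, everything else is bookkeeping.
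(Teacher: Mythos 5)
Your proposal is correct and follows exactly the paper's route: the paper likewise obtains Proposition \ref{hcs2} by applying Lemma \ref{decomplin} to the chain $\opnot A_m\subset\opnot B_{m,r,s}\subset\opnot B_{m,r,s+t}$, noting that the hypotheses hold because $\opnot B_{m,r,s+t}$ is strictly linear relative to $\opnot A_m$. Your additional verification of the splitting $d=d_0+d_1$ at $q=r+s$ and the counting inequality $p\leq s+t\leq m$ simply makes explicit what the paper leaves as ``clearly satisfied.''
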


The following result is a straightforward computation.

\begin{prop}\label{stair1}
	Given $r\geq1$, $s,t\geq 0$, and $s+t\leq m\leq r$, there is a staircase diagram in $\opnot A_{m}\downarrow\dgoperad$
	\[\xymatrix{
		\opnot A_{r}\ar@{>->}[r]^-{\text{incl.}}&\opnot A_{r+s}\ar@{>->}[r]^-{\text{incl.}}&\opnot A_{r+s+t}\\
		L_{\opnot A_{m}}\Sigma^{-1}\opnot B_{m,r,s+t}\ar[u]_{f_{m,r,s+t}}\ar@{->>}[r]^-{\text{proj.}}&L_{\opnot A_{m}}\Sigma^{-1}\opnot B_{m,r+s,t}\ar[u]_{f_{m,r+s,t}}\\
		L_{\opnot A_{m}}\Sigma^{-1}\opnot B_{m,r,s}\ar@{>->}[u]_{\text{incl.}}\ar@/^45pt/[uu]^{f_{m,r,s}}
		\ar@{<=}(20,-9);(20,-4)_{\; H}}\]
	where the `triangle' on the left commutes and the square commutes up to the chain homotopy  of DG-$\opnot A_m$-modules $H\colon \Sigma^{-1}\opnot B_{m,r,s+t}\r U_{\opnot A_{m}}\opnot A_{r+s}$ defined by 
	\[
	H(\bar \mu_{n})=\left\{
	\begin{array}{ll}
\mu_{n},&r<n\leq r+s,\\
	0,&r+s<n\leq r+s+t.
	\end{array}
	\right.
	\]
\end{prop}

A relative \emph{cylinder} of a map $f\colon \opnot P\rightarrow\opnot Q$ in $\opnot O\downarrow\dgoperad$ is a map $I_{\opnot O}f\colon I_{\opnot O}\opnot P\rightarrow I_{\opnot O}\opnot Q$ compatible with $f$ via the inclusions and the projections of the cylinders. A relative  \emph{torus} of $f$ is a map $T_{\opnot O}f\colon T_{\opnot O}\opnot P\cup_{\opnot P}\opnot Q\rightarrow T_{\opnot O}\opnot Q$ in $\opnot Q\downarrow\dgoperad\downarrow\opnot Q$ induced by a relative cylinder of $f$.

The differential of the absolute cylinder $I\opnot A_\infty$ on shifted generators is 
\begin{align*}
d(\sigma\mu_{n})&=i_0\mu_n-i_1\mu_n-\sum_{p+q=n+1}\sigma\mu_p\{i_1\mu_q\}+\sum_{\substack{1\leq v\leq u\\ t_1+\cdots+t_v=n+v-u}} i_0\mu_u\{\sigma\mu_{t_1},\dots,\sigma\mu_{t_v}\}.
\end{align*}
This follows from \cite[Theorem 2.2]{muro_cylinders_2016}. This also computes the differential in $I\opnot A_r\subset I\opnot A_\infty$, $2\leq n\leq r$. We obtain relative cylinders $I_{\opnot A_m}\opnot A_r$, $m\leq r$, by collapsing $I\opnot A_m\subset I\opnot A_r$ along the projection map $I\opnot A_m\rightarrow \opnot A_m$. Hence, the formula for $d(\sigma\mu_n)$, $m<n\leq r$, in $I_{\opnot A_m}\opnot A_r$ is as above, but identifying $i_0\mu_w=i_1\mu_w=\mu_w$ and $\sigma\mu_w=0$ for $w\leq m$.

\begin{prop}\label{cilindraco}
	For $r\geq 1$ and $0\leq s\leq m\leq r$ there is a unique relative cylinder in $\opnot A_{m}\downarrow\dgoperad$ 
	\[I_{\opnot A_m}f_{m,r,s}\colon L_{\opnot A_m}I\Sigma^{-1}\opnot B_{m,r,s}\longrightarrow I_{\opnot A_m}\opnot A_r\] 
	satisfying the following formula for $r<n\leq r+s$,
	\begin{align*}
	(I_{\opnot A_m}f_{m,r,s})(\bar\mu_n)={}&-\sum_{\substack{p+q=n+1\\2\leq p,q\leq r}}\sigma\mu_p\{i_1\mu_q\}+\sum_{\substack{1\leq v\leq u\leq r\\ t_1+\cdots+t_v=n+v-u\\t_1,\dots,t_v\leq r}} i_0\mu_u\{\sigma\mu_{t_1},\dots,\sigma\mu_{t_v}\}.
	\end{align*}
	This cylinder fits in the following homotopy cofiber sequence in $\opnot A_{m}\downarrow\dgoperad$,
	$$\xymatrix{L_{\opnot A_m}I\Sigma^{-1}\opnot B_{m,r,s}\ar[rr]^-{I_{\opnot A_m}f_{m,r,s}}&&I_{\opnot A_m}\opnot A_{r}\ar@{>->}[r]^-{\text{incl.}}&I_{\opnot A_m}\opnot A_{r+s}.}$$
\end{prop} 

\begin{proof}
The cylinder map $I_{\opnot A_m}f_{m,r,s}$ is well defined, and moreover its mapping cone is $I_{\opnot A_m}\opnot A_{r+s}$, since, in $I_{\opnot A_m}\opnot A_{r+s}$, for $r<n\leq r+s$, the differential of $\sigma\mu_n$ decomposes as
\begin{align*}
d(\sigma\mu_{n})={}&i_0\mu_n-i_1\mu_n
-\sum_{\substack{p+q=n+1\\p> r}}\sigma\mu_p\{\mu_q\}
+\sum_{\substack{1\leq u\\ t_1=n+1-u\\t_1> r}} \mu_u\{\sigma\mu_{t_1}\}
\\
&-\sum_{\substack{p+q=n+1\\2\leq p,q\leq r}}\sigma\mu_p\{i_1\mu_q\}+\sum_{\substack{1\leq v\leq u\leq r\\ t_1+\cdots+t_v=n+v-u\\t_1,\dots,t_v\leq r}} i_0\mu_u\{\sigma\mu_{t_1},\dots,\sigma\mu_{t_v}\},
\end{align*}
and
\[-\sum_{\substack{p+q=n+1\\p> r}}\sigma\mu_p\{\mu_q\}
+\sum_{\substack{1\leq u\\ t_1=n+1-u\\t_1> r}} \mu_u\{\sigma\mu_{t_1}\}=\sum_{\substack{p+q=n+1\\p\geq 2\\ q>r}}[\mu_p,\sigma\mu_q].\]
The decomposition follows from the fact that if $q>r$ then $p\leq s\leq m$, see \eqref{inequalities}, so $\sigma\mu_p=0$ and $i_1\mu_p=\mu_p$, and vice versa. Moreover, if $u>r$ then  \[t_i\leq n+v-u-(t_1+\cdots+t_{i-1}+t_{i+1}+\cdots+t_v)< r+s+v-r-(v-1)=s+1,\]
hence $t_i\leq m$ and $\sigma\mu_{t_i}=0$. Moreover, if $t_i>r$ for some $i$ then,
\begin{align*}
u&=n+v-(t_1+\cdots+t_n)\\&<r+s+v-(r+v-1)\\&=s+1,
\end{align*}
therefore $u\leq m$ and $i_0\mu_u=\mu_u$.
Furthermore, if $t_i>r$ and there exists some other $j\neq i$ between $1$ and $v$ then
\begin{align*}
t_j&=n+v-u-(t_1+\cdots+t_{j-1}+t_{j+1}+\cdots+t_i+\cdots+t_v)\\
&<r+s+v-1-(r+v-2)\\
&=s+1,
\end{align*}
hence $t_j\leq m$ and $\sigma\mu_{t_j}=0$. 
\end{proof}

The differential of the absolute torus $T\opnot A_\infty$ on shifted generators is
\begin{align}\label{differential_torus}
d(\sigma\mu_{n})&=-\sum_{p+q=n+1}\sigma\mu_p\{\mu_q\}+\sum_{\substack{1\leq v\leq u\\ t_1+\cdots+t_v=n+v-u}} \mu_u\{\sigma\mu_{t_1},\dots,\sigma\mu_{t_v}\}.
\end{align}
This follows from the computation of $I\opnot A_\infty$ recalled above. Following previous conventions, if we write $\sigma\mu=(0,\sigma\mu_2,\dots,\sigma\mu_n,\dots)$ in the brace algebra $\prod_{n\geq 1}T\opnot A_\infty(n)$, the differential of $T\opnot A_\infty$ is determined by the fact that this is an operad under $\opnot A_\infty$ and
\begin{align*}
d(\sigma\mu)&=-\sigma\mu\{\mu\}+\sum_{n\geq 1}\mu\{\sigma\mu,\stackrel{n}{\dots},\sigma\mu\}.
\end{align*}
The infinite sum is coordinate-wise finite since there are no nontrivial elements of arity $0$.

Formula \eqref{differential_torus} also computes the differential in the absolute and relative tori $T\opnot A_r\subset T\opnot A_\infty$ and $T_{\opnot A_m}\opnot A_r$, in the latter making $\sigma\mu_w=0$ for $w\leq m$.

\begin{cor}\label{torus}
	For $r\geq 1$ and $0\leq s\leq m\leq r$ there is a unique relative torus in $\opnot A_{r}\downarrow\dgoperad$ 
	\[T_{\opnot A_m}f_{m,r,s}\colon L_{\opnot A_r}\opnot B_{r,r,s}\longrightarrow T_{\opnot A_m}\opnot A_r\] 
	satisfying the following formula for $r<n\leq r+s$,
	\begin{align*}
	(T_{\opnot A_m}f_{m,r,s})(\bar\mu_n)={}&-\sum_{\substack{p+q=n+1\\2\leq p,q\leq r}}\sigma\mu_p\{\mu_q\}+\sum_{\substack{1\leq v\leq u\leq r\\ t_1+\cdots+t_v=n+v-u\\t_1,\dots,t_v\leq r}} \mu_u\{\sigma\mu_{t_1},\dots,\sigma\mu_{t_v}\}.
	\end{align*}
	This torus fits in the following homotopy cofiber sequence in $\opnot A_{r+s}\downarrow\dgoperad$,
	$$\xymatrix{L_{\opnot A_{r+s}}\opnot B_{r+s,r,s}\ar[rrr]^-{T_{\opnot A_m}f_{m,r,s}\cup_{\opnot A_r}\opnot A_{r+s}}&&&T_{\opnot A_m}\opnot A_{r}\cup_{\opnot A_r}\opnot A_{r+s}\ar@{>->}[r]^-{\text{incl.}}&T_{\opnot A_m}\opnot A_{r+s}.}$$
\end{cor}

\begin{cor}\label{torof} 
	For $0\leq m\leq r\leq 2m+1$, we have an identification in $\opnot A_{r}\downarrow\dgoperad\downarrow\opnot A_{r}$, \[T_{\opnot A_m}\opnot A_{r}=L_{\opnot A_{r}}\Sigma\opnot B_{r,m,r-m},\] identifying $\sigma\mu_n\in T_{\opnot A_m}\opnot A_{r}$ with $\sigma\bar\mu_n\in L_{\opnot A_{r}}\Sigma\opnot B_{r,m,r-m}$, $m<n\leq r$. Moreover, for $0\leq s\leq m\leq r$ and $r+s\leq 2m+1$,
	\[T_{\opnot A_m}f_{m,r,s}=L_{\opnot A_{r}} \Sigma g_{r,m,r-m,s}.\]
\end{cor}

\begin{proof}
	The first part follows from the fact that in \eqref{differential_torus}, in the second summation, only the summands with $v=1$ survive. If we had two non-trivial $\sigma\mu_{t_i}$ then both would have $t_i>m$ and the summand would have arity $\geq 2m+2$, but the arity is $n\leq r\leq 2m+1$.
	
	Similarly, under the constraints of the second part, the second summation in the formula for $(T_{\opnot A_m}f_{m,r,s})(\bar\mu_n)$ in Corollary \ref{torus} cannot contain summands with $v\geq 2$. Indeed, we must have $t_i>m$ for $\sigma\mu_{t_i}$ to be non-trivial, and if we have two of these then the arity of the summand would be $\geq 2m+2$, but the arity is $n\leq r+s\leq 2m+1$.
\end{proof}

We will show later, not without some effort, that the suspension of these identifications hold true under weaker constraints.

We now introduce DG-operads that will be relative cylinders of $T\opnot A_\infty$, $T\opnot A_r$, and $T_{\opnot A_m}\opnot A_r$.

\begin{defn}\label{iinfinity}
	Let us denote by $i_0,i_1\colon T\opnot A_\infty\rightarrow T\opnot A_\infty\cup_{\opnot A_\infty}T\opnot A_\infty$ the inclusions of the two factors of the coproduct. We define the cellular DG-operad $\opnot I_\infty$ under $T\opnot A_\infty\cup_{\opnot A_\infty}T\opnot A_\infty$ as follows. As a graded operad, 
	\[\opnot I_\infty=T\opnot A_\infty\cup_{\opnot A_\infty}T\opnot A_\infty\amalg \opnot F(\sigma^2\mu_2,\sigma^2\mu_3,\dots, \sigma^2\mu_n,\dots)\]
	with
	\begin{align*}
		\text{arity of }\sigma^2\mu_n&=n,&
		|\sigma^2\mu_n|&=1,&
		n&\geq 2.
	\end{align*}
	The differential in $\opnot I_\infty$ is defined by the following equation in the brace algebra $\prod_{n\geq 1}\opnot I_\infty(n)\ni \sigma^2\mu=(0,\sigma^2\mu_2,\sigma^2\mu_3,\dots,\sigma^2\mu_n,\dots)$, 
	\begin{align*}
	d(\sigma^2\mu)&=i_0\sigma\mu-i_1\sigma\mu+\sigma^2\mu\{\mu\}+\sum_{p,q\geq0} \mu\{i_0\sigma\mu,\stackrel{p}{\dots},i_0\sigma\mu,\sigma^2\mu,i_1\sigma\mu,\stackrel{q}{\dots},i_1\sigma\mu\}.
	\end{align*}
	We check below that the differential squares to $0$, so this DG-operad is well defined. The previous formula translates as
	\begin{align*}
		d(\sigma^2\mu_n)={}&i_0\sigma\mu_n-i_1\sigma\mu_n+\sum_{p+q=n+1}\sigma^2\mu_p\{\mu_q\}\\
		&+\!\!\!\!\!\!\!\!\sum_{\substack{p,q\geq 0\\u\geq p+q+1\\\sum s_j+v+\sum t_j=n+p+q+1-u}}\!\!\!\!\!\!\!\!\mu_u\{i_0\sigma\mu_{s_1},\dots,i_0\sigma\mu_{s_p},\sigma^2\mu_v,i_1\sigma\mu_{t_1},\dots,i_1\sigma\mu_{t_q}\}.
	\end{align*}
	The DG-operad $\opnot I_\infty$ under $T\opnot A_\infty\cup_{\opnot A_\infty}T\opnot A_\infty$ comes equipped with cylinder-like inclusions and projection under $\opnot A_\infty$,
	\[\xymatrix{T\opnot A_\infty\cup_{\opnot A_\infty}T\opnot A_\infty\ar@{>->}[r]^-{(i_0,i_1)}&\opnot I_\infty\ar@{->>}[r]^-p&T\opnot A_\infty}\]
	where $p$ is defined by $pi_0=1=pi_1$ and $p(\sigma^2\mu)=0$.
	
	For $r\geq 0$, we define $\opnot I_r\subset\opnot I_\infty$ as the sub-DG-operad under $T\opnot A_r\cup_{\opnot A_r}T\opnot A_r$ with underlying graded operad
	\[\opnot I_r=T\opnot A_r\cup_{\opnot A_r}T\opnot A_r\amalg \opnot F(\sigma^2\mu_2,\dots, \sigma^2\mu_r).\] The cylinder-like structure maps restrict to this sub-DG-operad,
	\[\xymatrix{T\opnot A_r\cup_{\opnot A_r}T\opnot A_r\ar@{>->}[r]^-{(i_0,i_1)}&\opnot I_r\ar@{->>}[r]^-p&T\opnot A_r}.\]
	
	For $m\leq r$, $\opnot I_m\subset \opnot I_r$ and the DG-operad $\opnot I_{r,m}$ is obtained from $\opnot I_r$ by taking push-out along $\opnot I_m\rightarrow T\opnot A_m\rightarrow\opnot A_m$. This means making $i_0\sigma\mu_w=i_1\sigma\mu_w=\sigma^2\mu_w=0$ for $w\leq m$. The cylinder-like maps pass to the quotient,
	\[\xymatrix{T_{\opnot A_{m}}\opnot A_{r}\cup_{\opnot A_r}T_{\opnot A_{m}}\opnot A_{r}\ar@{>->}[r]^-{(i_0,i_1)}&\opnot I_{r,m}\ar@{->>}[r]^-p&T_{\opnot A_m}\opnot A_{r}.}\]
\end{defn}

\begin{lem}
	The DG-operads $\opnot I_\infty$, $\opnot I_r$, and $\opnot I_{r,m}$, $1\leq m\leq r$, are well defined.
\end{lem}

\begin{proof}
	It suffices to prove that the following instance of the differential of the first operad squares to zero:	
	\begin{align*}
	d^2(\sigma^2\mu)={}&i_0d(\sigma\mu)-i_1d(\sigma\mu)+d(\sigma^2\mu)\{\mu\}-\sigma^2\mu\{d(\mu)\}\\
	&+\sum_{p,q\geq0}d(\mu)\{i_0\sigma\mu,\stackrel{p}{\dots},i_0\sigma\mu,\sigma^2\mu,i_1\sigma\mu,\stackrel{q}{\dots},i_1\sigma\mu\}\\
	&-\sum_{\substack{p,q\geq0\\1\leq j\leq p}}\mu\{i_0\sigma\mu,\stackrel{j-1}\dots,i_0d(\sigma\mu),\stackrel{p-j}\dots,i_0\sigma\mu,\sigma^2\mu,i_1\sigma\mu,\stackrel{q}{\dots},i_1\sigma\mu\}\\
	&-\sum_{p,q\geq0}\mu\{i_0\sigma\mu,\stackrel{p}{\dots},i_0\sigma\mu,d(\sigma^2\mu),i_1\sigma\mu,\stackrel{q}{\dots},i_1\sigma\mu\}\\
	&+\sum_{\substack{p,q\geq0\\1\leq j\leq q}}\mu\{i_0\sigma\mu,\stackrel{p}{\dots},i_0\sigma\mu,\sigma^2\mu,i_1\sigma\mu,\stackrel{j-1}\dots,i_1d(\sigma\mu),\stackrel{q-j}\dots,i_1\sigma\mu\}
	\end{align*}
	\begin{align*}
	={}&-i_0\sigma\mu\{\mu\}+\sum_{n\geq 1}\mu\{i_0\sigma\mu,\stackrel{n}{\dots},i_0\sigma\mu\}+i_1\sigma\mu\{\mu\}-\sum_{n\geq 1}\mu\{i_1\sigma\mu,\stackrel{n}{\dots},i_1\sigma\mu\}\\
	&+i_0\sigma\mu\{\mu\}-i_1\sigma\mu\{\mu\}+\sigma^2\mu\{\mu\}\{\mu\}\\&+\sum_{p,q\geq0} \mu\{i_0\sigma\mu,\stackrel{p}{\dots},i_0\sigma\mu,\sigma^2\mu,i_1\sigma\mu,\stackrel{q}{\dots},i_1\sigma\mu\}\{\mu\}-\sigma^2\mu\{\mu\{\mu\}\}\\
	&+\sum_{p,q\geq0} \mu\{\mu\}\{i_0\sigma\mu,\stackrel{p}{\dots},i_0\sigma\mu,\sigma^2\mu,i_1\sigma\mu,\stackrel{q}{\dots},i_1\sigma\mu\}\\
	&+\sum_{\substack{p,q\geq0\\1\leq j\leq p}}\mu\{i_0\sigma\mu,\stackrel{j-1}\dots,i_0\sigma\mu\{\mu\},\stackrel{p-j}\dots,i_0\sigma\mu,\sigma^2\mu,i_1\sigma\mu,\stackrel{q}{\dots},i_1\sigma\mu\}\\
	&-\sum_{\substack{p,q\geq0\\1\leq j\leq p\\n\geq 1}}\mu\{i_0\sigma\mu,\stackrel{j-1}\dots,\mu\{i_0\sigma\mu,\stackrel{n}\dots,i_0\sigma\mu\},\stackrel{p-j}\dots,i_0\sigma\mu,\sigma^2\mu,i_1\sigma\mu,\stackrel{q}{\dots},i_1\sigma\mu\}\\
	&-\sum_{p,q\geq0}\mu\{i_0\sigma\mu,\stackrel{p}{\dots},i_0\sigma\mu,i_0\sigma\mu,i_1\sigma\mu,\stackrel{q}{\dots},i_1\sigma\mu\}\\
	&+\sum_{p,q\geq0}\mu\{i_0\sigma\mu,\stackrel{p}{\dots},i_0\sigma\mu,i_1\sigma\mu,i_1\sigma\mu,\stackrel{q}{\dots},i_1\sigma\mu\}\\
	&-\sum_{p,q\geq0}\mu\{i_0\sigma\mu,\stackrel{p}{\dots},i_0\sigma\mu,\sigma^2\mu\{\mu\},i_1\sigma\mu,\stackrel{q}{\dots},i_1\sigma\mu\}\\
	&-\sum_{p,q,r,s\geq0}\mu\{i_0\sigma\mu,\stackrel{p}{\dots},\mu\{i_0\sigma\mu,\stackrel{r}{\dots},i_0\sigma\mu,\sigma^2\mu,i_1\sigma\mu,\stackrel{s}{\dots},i_1\sigma\mu\},\stackrel{q}{\dots},i_1\sigma\mu\}\\
	&-\sum_{\substack{p,q\geq0\\1\leq j\leq q}}\mu\{i_0\sigma\mu,\stackrel{p}{\dots},i_0\sigma\mu,\sigma^2\mu,i_1\sigma\mu,\stackrel{j-1}\dots,i_1\sigma\mu\{\mu\},\stackrel{q-j}\dots,i_1\sigma\mu\}\\
	&+\sum_{\substack{p,q\geq0\\1\leq j\leq q\\n\geq 1}}\mu\{i_0\sigma\mu,\stackrel{p}{\dots},i_0\sigma\mu,\sigma^2\mu,i_1\sigma\mu,\stackrel{j-1}\dots,\mu\{i_1\sigma\mu,\stackrel{n}{\dots},i_1\sigma\mu\},\stackrel{q-j}\dots,i_1\sigma\mu\}\\
	={}&0.
	\end{align*}
	The last step follows straightforwardly from the brace relation and cancellation of summands.
\end{proof}

\begin{prop} 
	For $r\geq 1$ and $0\leq s\leq m\leq r$ there is a morphism in $\opnot A_{r}\downarrow\dgoperad$ 
	\[I_{\opnot A_r}T_{\opnot A_m}f_{m,r,s}\colon L_{\opnot A_r}I\opnot B_{r,r,s}\longrightarrow \opnot I_{r,m}\] 
	compatible with $T_{\opnot A_m}f_{m,r,s}$ in Corollary \ref{torus} via the inclusions $i_0,i_1$ and the projections $p$, and
	satisfying the following formula for $r<n\leq r+s$,
	\begin{align*}
	(I_{\opnot A_r}T_{\opnot A_m}f_{m,r,s})(\sigma\bar\mu_n)={}&\sum_{\substack{p+q=n+1\\p,q\leq r}}\sigma^2\mu_p\{\mu_q\}\\
	&+\!\!\!\!\!\!\!\!\!\!\!\!\!\!\!\!\!\!\!\!\!\!\!\!\sum_{\substack{p,q\geq 0\\u\geq p+q+1\\\sum s_j+v+\sum t_j=n+p+q+1-u\\s_j,v,t_j\leq r}}\!\!\!\!\!\!\!\!\!\!\!\!\!\!\!\!\!\!\!\!\!\!\!\!\mu_u\{i_0\sigma\mu_{s_1},\dots,i_0\sigma\mu_{s_p},\sigma^2\mu_v,i_1\sigma\mu_{t_1},\dots,i_1\sigma\mu_{t_q}\}.
	\end{align*}
	This map fits in the following homotopy cofiber sequence in $\opnot A_{r+s}\downarrow\dgoperad$,
	$$\xymatrix{L_{\opnot A_{r+s}}I\opnot B_{r+s,r,s}\ar[rrr]^-{I_{\opnot A_r}T_{\opnot A_m}f_{m,r,s}\cup_{\opnot A_r}\opnot A_{r+s}}&&&\opnot I_{r,m}\cup_{\opnot A_r}\opnot A_{r+s}\ar@{>->}[r]^-{\text{incl.}}&\opnot I_{r+s,m}.}$$
\end{prop}

\begin{proof}
	This follows, as in previous cases, e.g.~Proposition \ref{cilindraco}, from the fact that, given $s<n\leq r$, in the formula for $d(\sigma^2\mu_n)$ in  Definition \ref{iinfinity}, if $q>r$ then $p\leq m$ so $\sigma^2\mu_p=0$, if $v>r$ then $s_j,t_j\leq m$ if any of them exists, so $i_0\sigma\mu_{s_j}=0=i_1\sigma\mu_{t_j}$, and if some $t_j>r$ or $s_j>r$ then $v\leq m$ so $\sigma^2\mu_v=0$. These inequalities can be checked as above.
\end{proof}

\begin{cor}\label{IT}
	For $1\leq m\leq r$ we have identifications in $\opnot A_{\infty}\downarrow\dgoperad$ and $\opnot A_{r}\downarrow\dgoperad$, respectively,
	\begin{align*}
		\opnot I_\infty &= I_{\opnot A_\infty}T\opnot A_\infty,&
		\opnot I_{r,m} &= I_{\opnot A_r}T_{\opnot A_m}\opnot A_r,
	\end{align*}
	with the structure maps in Definition \ref{iinfinity}, and $I_{\opnot A_r}T_{\opnot A_m}f_{m,r,s}$ is a cylinder map.
\end{cor}

\begin{proof}
	For fixed $m\geq 1$, the second identification follows by induction on $r\geq m$. Indeed, the result is obvious for $r=m$ since both DG-operads are initial in $\opnot A_r\downarrow\dgoperad$. Then we apply induction and use Corollary \ref{torus} and the fact that the mapping cone of a cylinder is the cylinder of the mapping cone. The map $I_{\opnot A_r}T_{\opnot A_m}f_{m,r,s}$ is a cylinder by the previous proposition, as long as its target is a cylinder. 	The first identification is the colimit in $r$ of the second one for $m=1$.
\end{proof}


\begin{cor}\label{torof2}
	For $1\leq m\leq r$ we have identifications in $\opnot A_{\infty}\downarrow\dgoperad\downarrow\opnot A_{\infty}$ and $\opnot A_{r}\downarrow\dgoperad\downarrow\opnot A_{r}$, respectively,
	\begin{align*}
	\Sigma_{\opnot A_{\infty}}T\opnot A_\infty&=L_{\opnot A_\infty}\Sigma^2\opnot B_{\infty,1,\infty},&
	\Sigma_{\opnot A_{r}}T_{\opnot A_m}\opnot A_r&=L_{\opnot A_r}\Sigma^2\opnot B_{r,m,r-m},
	\end{align*}
	identifying $\sigma^2\mu_n$ on the left with $\sigma^2\bar \mu_n$ on the right.
	Morever, for $0\leq s\leq m\leq r$, \[\Sigma_{\opnot A_r}T_{\opnot A_m}f_{m,r,s}=L_{\opnot A_r}\Sigma^2g_{r,m,r-m,s}.\]
\end{cor}

This follows from the fact that $\Sigma_{\opnot A_\infty}$ and $\Sigma_{\opnot A_r}$ consist of making all $i_0\sigma\mu_n=i_1\sigma\mu_n=0$.

\section{The extended spectral sequence of certain towers}\label{general}

In this section we extend the Bousfield--Kan spectral sequence of a tower of fibrations \cite{bousfield_homotopy_1972}
\[\cdots\rightarrow X_{n+1}\rightarrow X_n\rightarrow\cdots\rightarrow X_0\]
with certain extra structure. We should remark that Bousfield did something similar \cite{bousfield_homotopy_1989} in case the tower comes from a cosimplicial space. We believe both extensions coincide in the overlap. Ours is in general larger and more adapted to our particular situation.

The spectral sequence is related to the homotopy groups of the (homotopy) limit $\lim_nX_n$. More precisely, $E_\infty^{st}$ contributes to $\pi_{t-s}\lim_nX_n$ (convergence issues were satisfactorily settled in \cite{bousfield_homotopy_1972}). The natural map
\[\pi_0{\lim}_nX_n\longrightarrow{\lim}_n\pi_0X_n\]
is surjective. Any base point in $\lim_nX_n$ induces compatible base points in the tower. The fiber of the previous map containing the component of that base point is in bijection with
\[{\lim}^1_n\pi_1X_n.\]

Assume from now on that the tower is based. For $r\geq m$ we denote the (homotopy) fiber of $X_r\rightarrow X_m$ by $X_{r,m}$. We always understand that $X_m=*$ for $m<0$, in particular $X_{r,m}=X_r$ for $m<0$. Given $r\geq1$ and $t\geq s\geq 0$, the term $E^{st}_r$ of the Bousfield--Kan spectral sequence is the homology of
\begin{equation}\label{bkterms}
\ker[\pi_{t-s+1}X_{s-1}\rightarrow\pi_{t-s+1}X_{s-r}]\rightarrow\pi_{t-s}X_{s,s-1}\rightarrow\frac{\pi_{t-s}X_s}{\im[\pi_{t-s}X_{s+r-1}\rightarrow\pi_{t-s}X_{s}]},
\end{equation}
or the quotient of the action of the group on the left on the pointed kernel of the arrow on the right. These terms are abelian groups for $t-s\geq 2$, plain groups for $t-s=1$, and pointed sets for $t-s=0$.
The differential
\[d_r\colon E_r^{st}\longrightarrow E_r^{s+r,t+r-1}\]
is defined for $t>s\geq 0$ by chasing
\begin{equation}\label{chase}
\pi_{t-s}X_{s,s-1}\rightarrow\pi_{t-s}X_s\leftarrow \pi_{t-s}X_{s+r-1}\rightarrow\pi_{t-s-1}X_{s+r,s+r-1}.
\end{equation}
All these morphisms are induced by bonding maps and homotopy fiber sequences. For $t-s\geq 1$, $E_{r+1}^{st}$ is the homology of $d_r$.

For $m\leq r$ and $s\geq 0$, the square
\begin{equation}\label{pullback}
\xymatrix{X_{r+s,m}\ar[r]\ar[d]&X_{r+s}\ar[d]\\
	X_{r,m}\ar[r]&X_{r}}
\end{equation}
is a homotopy pull-back. Hence we have a homotopy fiber sequence
\begin{equation}\label{homotopy_fibration}
	X_{r+s,r}\longrightarrow X_{r+s,m}\longrightarrow X_{r,m}.
\end{equation}
Using appropriate instances of the previous homotopy pull-back we see that, for $r\geq 1$ and $t>s\geq 0$, $E_r^{st}$ is the homology of
\begin{equation}\label{terms_aternative}
\pi_{t-s+1}X_{s-1,s-r}\longrightarrow\pi_{t-s}X_{s,s-1}\longrightarrow\pi_{t-s-1}X_{s+r-1,s}.
\end{equation}
These arrows are connecting homomorphisms. Moreover, for $r\geq 1$ and $t\geq s\geq 0$, $E_r^{st}$ is the cokernel or the quotient of the action of the source on the target in
\begin{equation}\label{terms_aternative_2}
\pi_{t-s+1}X_{s-1,s-r}\longrightarrow\im[\pi_{t-s}X_{s+r-1,s-1}\rightarrow\pi_{t-s}X_{s,s-1}].
\end{equation}
Furthermore, $d_r\colon E_r^{st}\rightarrow E_r^{s+r,t+r-1}$ is always given by chasing 
\begin{equation}\label{differential_alternative}
\pi_{t-s}X_{s,s-1}\longleftarrow\pi_{t-s}X_{s+r-1,s-1}\longrightarrow\pi_{t-s-1}X_{s+r,s+r-1}.
\end{equation}
For $0\leq s=t\leq r-1$,
\begin{equation}\label{low_dimensions}
	E_r^{ss}=\ker[\pi_0X_s\rightarrow\pi_0X_{s-1}]\cap\im[\pi_0X_{s+r-1}\rightarrow\pi_0X_s].
\end{equation}

We now list our standing assumptions.

\begin{ass}\label{assumptions}
There are $H\Bbbk$-module spectra $F_{r,s}$, $r\in\mathbb Z$, $s\geq 0$, with $F_{r,0}\simeq *$ for all $r\in\mathbb Z$ and $F_{r,s}\simeq *$ if $r+s<0$, such that:
\begin{enumerate}[label={\theass.\arabic*}]
	\item\label{fibration_unstable} For $s\geq 0$ and $s-2\leq m\leq r$ we have (compatible) homotopy fiber sequences
	\begin{equation*}
	X_{r+s,m}\longrightarrow X_{r,m}\longrightarrow \Omega^{\infty-1} F_{r,s}
	\end{equation*}
	where the first map is the map in \eqref{pullback} and \eqref{homotopy_fibration}. For $s=0,1$ we can drop the lower bound on $m$. For $s\geq 0$ and $s-2\leq r$, we obtain a weak equivalence 
	\begin{equation*}
	X_{r+s,r}\simeq\Omega^\infty F_{r,s}.
	\end{equation*}
	The lower bound on $r$ is again superfluous if $s=0,1$.
	
	\item\label{fibration_stable} We have homotopy fiber sequences of $H\Bbbk$-module spectra, $r\in\mathbb Z$, $s,t\geq 0$,
	\begin{equation*}
	F_{r+s,t}\longrightarrow F_{r,s+t} \longrightarrow F_{r,s},
	\end{equation*}
	where the first (resp.~second) arrow is the identity for $s=0$ (resp.~$t=0$). Observe that the first arrow is a weak equivalence $F_{r+s,t}\simeq F_{r,s+t}$ whenever $r+s<0$.
	
	\item\label{staircase} For $s,t\geq 0$ and $s+t-2\leq m\leq r$, the following diagram involving four of the previous homotopy fiber sequences is commutative up to (coherent) homotopy
	\begin{equation*}
	\xymatrix{X_{r+s+t,m}\ar[r]&X_{r+s,m}\ar[r]\ar[d]&X_{r,m}\ar[d]\\
		&\Omega^{\infty-1} F_{r+s,t}\ar[r]&\Omega^{\infty-1} F_{r,s+t}\ar[d]\\&&\Omega^{\infty-1} F_{r,s}}
	\end{equation*}
	For $s,t\geq 0$ and $s+t-2\leq r$ we deduce that the homotopy fiber sequence 
	\[X_{r+s+t,r}\longrightarrow X_{r+s,r}\longrightarrow\Omega^{\infty-1} F_{r+s,t}\]
	from \eqref{fibration_unstable} can be obtained by taking $\Omega^{\infty}$ in the following translation of the homotopy fiber sequence in \eqref{fibration_stable},
	\[F_{r,s+t}\longrightarrow F_{r,s}\longrightarrow\Omega^{-1}F_{r+s,t}.\]

	\item\label{staircase2} For any $r\in\mathbb Z$ and $s,t,u\geq0$ we have a commutative diagram containing four of the previous homotopy fiber sequences of $H\Bbbk$-module spectra
	\begin{equation*}
	\xymatrix{F_{r+s+t,u}\ar[r]&F_{r+s,t+u}\ar[r]\ar[d]&F_{r,s+t+u}\ar[d]\\
		&F_{r+s,t}\ar[r]&F_{r,s+t}\ar[d]\\&&F_{r,s}}
	\end{equation*}
	
	\item\label{new_axiom_1} For $m\leq r\leq 2m+3$, a weak equivalence 
	\[\Omega X_{r,m}\simeq\Omega^{\infty+1}F_{m,r-m}\] 
	is given. Moreover, if $m\leq r$, $s\geq 0$, and $r+s\leq 2m+3$, then the loop space of the homotopy fiber sequence 
	\begin{equation}\tag{A}\label{hfs1}
	X_{r+s,r}\longrightarrow X_{r+s,m}\longrightarrow X_{r,m}
	\end{equation} 
	in \eqref{homotopy_fibration}
	identifies with $\Omega^{\infty+1}$ of the homotopy fiber sequence	
	\begin{equation}\tag{B}\label{hfs2}
	F_{r,s}\longrightarrow F_{m,r+s-m}\longrightarrow F_{m,r-m}
	\end{equation} 
	in \eqref{fibration_stable} via the previous weak equivalences.	Furthermore, if in addition $s-2\leq m\leq r$, then the loop space of the homotopy fiber sequence
	\begin{equation}\tag{C}\label{hfs3}
		X_{r+s,m}\longrightarrow X_{r,m}\longrightarrow \Omega^{\infty-1} F_{r,s}
	\end{equation}
	in \eqref{fibration_unstable}
	is $\Omega^{\infty+1}$ of the following translation of the previous homotopy fiber sequence of $H\Bbbk$-module spectra,
	\begin{equation}\tag{D}\label{hfs4}
		F_{m,r+s-m}\longrightarrow F_{m,r-m}\longrightarrow \Omega^{-1} F_{r,s},
	\end{equation} 
	via the previous weak equivalences. In particular, the loop space of the weak equivalence in \eqref{fibration_unstable} is the weak equivalence here. For $s=0,1$ we can drop the lower bound on $m$.
	
	\item\label{new_axiom_2} For $m\leq r$, a weak equivalence $$\Omega^2 X_{r,m}\simeq \Omega^{\infty+2}F_{m,r-m}$$ is given. This weak equivalence is the loop space of the weak equivalence in \eqref{new_axiom_1} whenever it is defined. Moreover, for $m\leq r$ and $s\geq 0$ 
	the double loop space of 
	\eqref{hfs1}
	is $\Omega^{\infty+2}$ of the 
	homotopy fiber sequence \eqref{hfs2}
	via the previous weak equivalences. Furthermore, if $s\geq 0$ and $s-2\leq m\leq r$, then the double loop space of \eqref{hfs3}
	is $\Omega^{\infty+2}$ of 
	\eqref{hfs4} 
	via the previous weak equivalences. Again, for $s=0,1$ we can drop the lower bound on $m$.
	
%
\end{enumerate}
\end{ass}

\begin{rem}\label{abelian}
	Assumption \ref{fibration_unstable} shows that the terms $E_r^{s,s+1}$, $s\geq 0$, $r\geq 1$, which in general are just plain groups, are abelian groups in our case since they are defined as the homology at $\pi_{1}X_{s,s-1}\cong\pi_1 F_{s-1,1}$ which is a $\Bbbk$-module.
\end{rem}


\begin{defn}\label{ss_extension}
Under our standing assumptions, we extend the Bousfield--Kan spectral sequence by defining $E_r^{st}$ for $r\geq 1$, $s\geq 2r-3$, and $t\in\mathbb Z$ as the homology of the following $\Bbbk$-module morphisms,
\begin{equation}\label{terms_new}
\pi_{t-s+1}F_{s-r,r-1}\longrightarrow\pi_{t-s}F_{s-1,1}\longrightarrow\pi_{t-s-1}F_{s,r-1}.
\end{equation}
Here both maps are connecting morphism induced by the homotopy fiber sequences of $H\Bbbk$-module  spectra in Assumption \ref{fibration_stable}. Moreover, in 
the following five cases,
\renewcommand{\theenumi}{\alph{enumi}}
\begin{enumerate}
	\item $s\geq 2r-3$, $t\in\mathbb Z$; 
	\item $t-s\geq 2$;
	\item $t>s\geq r-1$;
	\item $t\geq s\geq r$.
\end{enumerate}
we define
\[
d_r\colon E_r^{st}\longrightarrow E_r^{s+r,t+r-1}
\]
by chasing the following $\Bbbk$-module morphisms,
\begin{equation}\label{differential_new}\pi_{t-s}F_{s-1,1}\longleftarrow\pi_{t-s}F_{s-1,r}\longrightarrow\pi_{t-s-1}F_{s+r-1,1}.
\end{equation}
Again, both maps are defined by the homotopy fiber sequences of $H\Bbbk$-module spectra in \eqref{fibration_stable}.
\end{defn}

\begin{prop}\label{terms_well_defined}
	The extension of the Bousfield--Kan terms is well defined. Both definitions coincide on the overlap. Moreover, the homology of \eqref{terms_new} also computes $E_r^{st}$ for $t-s\geq 2$ and for $t-s\geq 1$ and $s\geq r-2$.
\end{prop}

\begin{proof}
	The arrows \eqref{terms_new} compose to $0$ by the following instance of \eqref{staircase2} 
\[\xymatrix{F_{s,r-1}\ar[r]&F_{s-1,r}\ar[r]\ar[d]&F_{s-r,2r-1}\ar[d]\\
	&F_{s-1,1}\ar[r]&F_{s-r,r}\ar[d]\\&&F_{s-r,r-1}}\]
	which is defined for $r-1\geq 0$. Hence the new definition of $E_r^{st}$ makes sense.
	
	Let us check coincidence on the indicated ranges. We use the formula for $E_r^{st}$ in \eqref{terms_aternative_2}, $r\geq 1$, $t\geq s\geq 0$. 
	
	The kernel of the second arrow in \eqref{terms_new} is the same as the image of the morphism $\pi_{t-s}X_{s+r-1,s-1}\rightarrow\pi_{t-s}X_{s,s-1}$ in the following three cases. 
	
	For $t\geq s\geq r-1\geq 0$ by the last part of \eqref{staircase} since
	\begin{equation}\label{hfs5}\tag{E}
	X_{s+r-1,s-1}\longrightarrow X_{s,s-1}\longrightarrow \Omega^{\infty -1}F_{s,r-1}
	\end{equation}
	is $\Omega^\infty$ of 
	\begin{equation}\label{hfs6}\tag{F}
		F_{s-1,r}\longrightarrow F_{s-1,1}\longrightarrow\Omega^{-1}F_{s,r-1}
	\end{equation}
	for $r-2\leq s-1$.
	
	For $t>s\geq r-2$ and $r\geq 1$ by \eqref{new_axiom_1}, since the loop space of \eqref{hfs5}
	is $\Omega^{\infty+1}$ of \eqref{hfs6} when $r-1\geq 0$, $s+r-1\leq 2(s-1)+3$, and $r-3\leq s-1$.	
	
	For $t-s\geq 2$ and any $s\geq 0$ and $r\geq 1$ by \eqref{new_axiom_2}, since then the double loop space of 
	$$X_{s+r-1,s}\longrightarrow X_{s+r-1,s-1}\longrightarrow X_{s,s-1}$$
	is $\Omega^{\infty+2}$ of $$F_{s,r-1}\longrightarrow F_{s-1,r}\longrightarrow F_{s-1,1}.$$
	
	The morphism $\pi_{t-s+1}X_{s-1,s-r}\rightarrow\pi_{t-s}X_{s,s-1}$ coincides with the first arrow in \eqref{terms_new} in the following two cases.
	
	For $t\geq s\geq 2r-3$ and $r\geq 1$ by \eqref{new_axiom_1}, since the loop space of $$X_{s,s-r}\longrightarrow X_{s-1,s-r}\longrightarrow \Omega^{\infty -1}F_{s-1,1}$$ is $\Omega^{\infty+1}$ of $$F_{s-r,r}\longrightarrow F_{s-r,r-1}\longrightarrow\Omega^{-1}F_{s-1,1}$$
	when $s\leq 2(s-r)+3$ and $s-r\leq s-1$.
	
	For $t>s\geq 0$ and $r\geq 1$ by \eqref{new_axiom_2}, since the double loop space of the former homotopy fiber sequence is $\Omega^{\infty+2}$ of the latter. 
\end{proof}

\begin{rem}\label{hkmodules}
	This proposition endows the terms $E_{r}^{st}$ with a $\Bbbk$-module structure for $s\geq 2r-3$, $t>s\geq r-2$, and $t-s\geq 2$. This region is depicted in blue in Fig.~\ref{green}.
\end{rem}

We have actually proved the following proposition too, which yields a mixed definition, between \eqref{terms_aternative} and \eqref{terms_new}, for some terms of the spectral sequence.

\begin{prop}\label{terms_mixed}
	For $t\geq s\geq r-1\geq 0$, the Bousfield--Kan term $E_r^{ss}$ can be computed as the homology of
	\[\pi_{t-s+1}X_{s-1,s-r}\longrightarrow\pi_{t-s}X_{s,s-1}=\pi_{t-s}F_{s-1,1}\longrightarrow\pi_{t-s-1}F_{s,r-1}.\]
\end{prop}

\begin{rem}
	For $s\geq r-1\geq 0$, this proposition endows the pointed set $E_r^{ss}$ with an abelian group structure, see the green half-line in Fig.~\ref{green}.
\end{rem}

\setcounter{rpage}{5}
\begin{figure}
	\begin{tikzpicture}
	
	
	\draw[step=\rescale,gray,very thin] (0,{-\abajo*\rescale+\margen}) grid ({(3*\rpage -3 +\derecha)*\rescale-\margen},{(3*\rpage -3 +\arriba)*\rescale-\margen});
	
	
	\filldraw[fill=red,draw=none,opacity=0.2] (0,0) -- ({(3*\rpage -3 +\arriba)*\rescale-\margen},{(3*\rpage -3 +\arriba)*\rescale-\margen}) --
	(0,{(3*\rpage -3 +\arriba)*\rescale-\margen}) -- cycle;
	\draw[red!70,thick]  (0,0) -- ({(3*\rpage -3 +\arriba)*\rescale-\margen},{(3*\rpage -3 +\arriba)*\rescale-\margen});
		
	
	\foreach  \x in {2,...,\rpage}
	\node[fill=red,draw=none,circle,inner sep=.5mm,opacity=1]   at ({(\x-2)*\rescale},{(\x-2)*\rescale}) {};
	
%


	\filldraw[fill=Green,draw=none,opacity=0.2]  ({(\rpage-1)*\rescale},{(\rpage-1)*\rescale}) -- 
	({(\rpage-1)*\rescale},{(3*\rpage -3 +\arriba)*\rescale-\margen}) --
	({(3*\rpage -3 +\arriba)*\rescale-\margen},{(3*\rpage -3 +\arriba)*\rescale-\margen}) -- cycle;
	\draw[Green!70,thick]  ({(\rpage-1)*\rescale},{(3*\rpage -3 +\arriba)*\rescale-\margen}) node[black,anchor=south] {$\scriptscriptstyle r-1$}  --({(\rpage-1)*\rescale},{(\rpage-1)*\rescale}) -- ({(3*\rpage -3 +\arriba)*\rescale-\margen},{(3*\rpage -3 +\arriba)*\rescale-\margen});
	
	
	
	
	\filldraw[fill=blue,draw=none,opacity=0.2]  (0,{(3*\rpage -3 +\arriba)*\rescale-\margen}) --
	(0,2*\rescale) -- ({(\rpage-2)*\rescale},{\rpage*\rescale}) -- ({(\rpage-2)*\rescale},{(\rpage-1)*\rescale}) --({(2*\rpage-3)*\rescale},{(2*\rpage-2)*\rescale}) -- ({(2*\rpage-3)*\rescale},{-\abajo*\rescale+\margen}) -- ({(3*\rpage -3 +\derecha)*\rescale-\margen},{-\abajo*\rescale+\margen}) --
	({(3*\rpage -3 +\derecha)*\rescale-\margen},{(3*\rpage -3 +\arriba)*\rescale-\margen}) -- cycle;
	
	\draw[blue!70,thick] (0,2*\rescale) -- ({(\rpage-2)*\rescale},{\rpage*\rescale}) -- ({(\rpage-2)*\rescale},{(\rpage-1)*\rescale}) --({(2*\rpage-3)*\rescale},{(2*\rpage-2)*\rescale}) -- ({(2*\rpage-3)*\rescale},{-\abajo*\rescale+\margen}) node[black,anchor=north] {$\scriptscriptstyle 2r-3$};
	
	\node[black,anchor=north] at ({(\rpage-2)*\rescale},{-\abajo*\rescale+\margen}) {$\scriptscriptstyle r-2$};
	
	
	\draw [->] (0,0)  -- ({(3*\rpage -3 + \derecha)*\rescale},0) node[anchor=north] {$\scriptstyle s$};
	
	
	\draw [->] (0,-{\abajo*\rescale}) -- (0,{(3*\rpage -3 +\arriba)*\rescale})node[anchor=east] {$\scriptstyle t$};
	\end{tikzpicture}
	\caption{This depicts the page $E^{st}_{r}$ of the extended spectral sequence for $r=\arabic{rpage}$. The blue part consists of $\Bbbk$-modules and it contains the extension. The red dots are the only pointed sets without further structure. The green region is defined according to Proposition \ref{terms_mixed} and it consists of abelian groups, as the terms in the line $t-s=1$, see Remark \ref{abelian}.}\label{green}
\end{figure}

\begin{prop}\label{differentials_game}
	The new $d_r$ in Definition \ref{ss_extension} is well defined and coincides with Bousfield--Kan's in the overlap. The extended $d_r$ squares to zero and its homology is $E_{r+1}^{st}$ whenever the incoming and outgoing differentials are defined at $E_r^{st}$ (for $0\leq s<r$ and $t>s$ we set the incoming differential to be $0\rightarrow E_r^{st}$).
\end{prop}

\begin{proof}
	The zig-zag \eqref{differential_new} can be chased by the following instance of \eqref{fibration_stable},
	\[F_{s,r-1}\longrightarrow F_{s-1,r}\longrightarrow F_{s-1,1}.\]
	Moreover, this also proves that the middle term of \eqref{differential_new} surjects onto the representatives of elements in $E_r^{st}$, since in all cases the source of the newly defined $d_r$ is given by Definition \ref{ss_extension} or Proposition \ref{terms_mixed}.

	We now check that chasing a representative of an element in $E_r^{st}$ yields a well-defined element in $E_r^{s+r,t+r-1}$ independent of the representative. In cases (a)--(c) the source and target of $d_r$ are in the range given by Definition \ref{ss_extension}, see Proposition \ref{terms_well_defined}, and we can use the following diagram containing several instances of \eqref{staircase2}, which is defined for $r\geq 1$,
	\begin{equation}\label{dont_repeat}\tag{G}
	\xymatrix{F_{s+r,r-1}\ar[r]&F_{s+r-1,r}\ar[r]\ar[d]&F_{s,2r-1}\ar[r]\ar[d]&F_{s-1,2r}\ar[d]&\\
		&F_{s+r-1,1}\ar[r]&F_{s,r}\ar[r]\ar[d]&F_{s-1,r+1}\ar[d]\ar[r]&F_{s-r,2r}\ar[d]\\
		&&F_{s,r-1}\ar[r]&F_{s-1,r}\ar[d]\ar[r]&F_{s-r,2r-1}\ar[d]\\
		&&&F_{s-1,1}\ar[r]&F_{s-r,r}\ar[d]\\
		&&&&F_{s-r,r-1}}
	\end{equation}
	In case (d), the target of $d_r$ is given by Definition \ref{ss_extension} but the source is defined by Proposition \ref{terms_mixed}, so we use the following diagram instead
	\begin{equation*}\label{big_translated}
	\xymatrix{
	\Omega^\infty F_{s+r,r-1}\ar[r]&\Omega^\infty F_{s+r-1,r}\ar[r]\ar[d]&\Omega^\infty F_{s,2r-1}\ar[r]\ar[d]&\Omega^\infty F_{s-1,2r}\ar[d]&\\
	&\Omega^{\infty}F_{s+r-1,1}\ar[r]&\Omega^{\infty} F_{s,r}\ar[r]\ar[d]& \Omega^{\infty}F_{s-1,r+1}\ar[d]\ar[r]&X_{s+r}\ar[d]\\
	&&\Omega^{\infty}F_{s,r-1}\ar[r]&\Omega^{\infty}F_{s-1,r}\ar[d]\ar[r]&X_{s+r-1}\ar[d]\\
	&&&\Omega^{\infty}F_{s-1,1}\ar[r]&X_{s}\ar[d]\\
	&&&&X_{s-1}}
	\end{equation*}
	This diagram is built from \eqref{fibration_unstable}, \eqref{staircase}, and \eqref{staircase2}. It is defined for $s\geq r\geq 1$. Moreover, this diagram also shows that the new definition of $d_r$ coincides with Bousfield--Kan's in $t>s\geq r$. Let us check agreement in the rest of the overlap. For $t>s\geq r-1$, the second arrows in \eqref{differential_alternative} and \eqref{differential_new} coincide because the loop space of 
	\[X_{s+r,s-1}\longrightarrow X_{s+r-1,s-1}\longrightarrow\Omega^{\infty-1}F_{s+r-1,1}\]
	is $\Omega^{\infty+1}$ of
	\[F_{s-1,r+1}\longrightarrow F_{s-1,r}\longrightarrow\Omega^{-1}F_{s+r-1,1}\]
	by \eqref{new_axiom_1} since $s+r\leq 2(s-1)+3$. Moreover, they coincide for any $t-s\geq 2$ since then the double loop space of the former is $\Omega^{\infty+2}$ of the latter by \eqref{new_axiom_2}. For $t>s\geq r-2$, the first arrows in \eqref{differential_alternative} and \eqref{differential_new} coincide because the loop space of 
	\[X_{s+r-1,s}\longrightarrow X_{s+r-1,s-1}\longrightarrow X_{s,s-1}\]
	is $\Omega^{\infty+1}$ of
	\[F_{s,r-1}\longrightarrow F_{s-1,r}\longrightarrow F_{s-1,1}\]
	by \eqref{new_axiom_1} since $s+r-1\leq 2(s-1)+3$. Moreover, they coincide for any $t-s\geq 2$ since then the double loop space of the former is $\Omega^{\infty+2}$ of the latter again by \eqref{new_axiom_2}. Now we are fully done with agreement in the overlap.
	
	
	We now compute the image and the kernel of a $d_r\colon E_r^{st}\rightarrow E_r^{s+r,t+r-1}$ defined as in Definition \ref{ss_extension}. By the comment in the first paragraph, the image consists of the elements in $E_r^{s+r,t+r-1}$ with a representative in the image of
	\[\pi_{t-s}F_{s-1,r}\longrightarrow\pi_{t-s-1}F_{s+r-1,1}.\]
	The kernel consists of those elements in $E_r^{st}$ whose representatives can be chased along the following diagram
	\[\pi_{t-s}F_{s-1,1}\longleftarrow\pi_{t-s}F_{s-1,r}\longrightarrow\pi_{t-s-1}F_{s+r-1,1}\longleftarrow \pi_{t-s}F_{s,r-1}.\] 
	By the instance of \eqref{staircase2} on the left of \eqref{dont_repeat},
	those are precisely the elements with representatives in the kernel of
	\[\pi_{t-s}F_{s-1,1}\longrightarrow \pi_{t-s-1}F_{s,r}.\]
	When both differentials in the composite $d^2_r$ are given as in Definition \ref{ss_extension}, the middle term is also given by the formula therein, even in page $r+1$. The formula for this middle term and the previous kernel and image computations show that, in this case, $d_r^2=0$ and the homology is the corresponding term in page $r+1$.
	It only remains to check that $d_r^2=0$, yielding the appropriate homology, in case the incoming differential is purely Bousfield--Kan's and the outgoing differential is given by Definition \ref{ss_extension}. In this case the middle term of the composite $d_r^2$ can be computed as in Proposition \ref{terms_mixed}, even in page $r+1$. Hence the claim follows as above, from the previous computation of the kernel of the outgoing differential and the known image of the Bousfiel--Kan differential.	
	
	The final remark on terms whose incoming differential would come from the left half-plane pertains to the Bousfield--Kan part, so there is nothing else to check.
\end{proof}

\begin{rem}\label{hkmodules_2}
	The differential $d_r$ is a $\Bbbk$-module morphism in cases (a)--(c) of Definition \ref{ss_extension}.
\end{rem}

\begin{rem}
	By Assumption \ref{new_axiom_2}, \[\Omega^2\lim_nX_{n}=\Omega^{\infty+2}\lim_nF_{-1,n+1}\]
	and moreover $\Omega^2$ of the tower of spaces $\{X_n\}_{n\geq 0}$ is $\Omega^{\infty+2}$ of the tower of $H\Bbbk$-module spectra $\{F_{-1,n+1}\}_{n\geq 0}$. Therefore $\pi_{t-s}\lim_nX_n$ has a $\Bbbk$-module structure for $t-s\geq 2$ and the spectral sequence is a spectral sequence of $\Bbbk$-modules in this range
\end{rem}

\begin{figure}
	\begin{tikzpicture}
	
	
	\draw[step=\rescale,gray,very thin] (0,{-\abajo*\rescale+\margen}) grid ({(3*\rpage -3 +\derecha)*\rescale-\margen},{(3*\rpage -3 +\arriba)*\rescale-\margen});
	
	
	\filldraw[fill=red,draw=none,opacity=0.2] (0,0) -- ({(3*\rpage -3 +\arriba)*\rescale-\margen},{(3*\rpage -3 +\arriba)*\rescale-\margen}) --
	(0,{(3*\rpage -3 +\arriba)*\rescale-\margen}) -- cycle;
	\draw[red!70,thick]  (0,0) -- ({(3*\rpage -3 +\arriba)*\rescale-\margen},{(3*\rpage -3 +\arriba)*\rescale-\margen});
	
	
	\foreach  \x in {2,...,\rpage}
	\node[fill=red,draw=none,circle,inner sep=.5mm,opacity=1]   at ({(\x-2)*\rescale},{(\x-2)*\rescale}) {};
	
	
	\filldraw[fill=Green,draw=none,opacity=0.2]  ({(\rpage-1)*\rescale},{(\rpage-1)*\rescale}) -- 
	({(\rpage-1)*\rescale},{(3*\rpage -3 +\arriba)*\rescale-\margen}) --
	({(3*\rpage -3 +\arriba)*\rescale-\margen},{(3*\rpage -3 +\arriba)*\rescale-\margen}) -- cycle;
	\draw[Green!70,thick]  ({(\rpage-1)*\rescale},{(3*\rpage -3 +\arriba)*\rescale-\margen}) node[black,anchor=south] {$\scriptscriptstyle r-1$}  --({(\rpage-1)*\rescale},{(\rpage-1)*\rescale}) -- ({(3*\rpage -3 +\arriba)*\rescale-\margen},{(3*\rpage -3 +\arriba)*\rescale-\margen});
	
	
	\node[fill=Green,draw=none,circle,inner sep=.5mm,opacity=1] at ({(\rpage-1)*\rescale},{(\rpage-1)*\rescale}) {};
	
	
	\filldraw[fill=blue,draw=none,opacity=0.2]  (0,{(3*\rpage -3 +\arriba)*\rescale-\margen}) --
	(0,2*\rescale) -- ({(\rpage-2)*\rescale},{\rpage*\rescale}) -- ({(\rpage-2)*\rescale},{(\rpage-1)*\rescale}) --({(2*\rpage-3)*\rescale},{(2*\rpage-2)*\rescale}) -- ({(2*\rpage-3)*\rescale},{-\abajo*\rescale+\margen}) -- ({(3*\rpage -3 +\derecha)*\rescale-\margen},{-\abajo*\rescale+\margen}) --
	({(3*\rpage -3 +\derecha)*\rescale-\margen},{(3*\rpage -3 +\arriba)*\rescale-\margen}) -- cycle;
	
	\draw[blue!70,thick] (0,2*\rescale) -- ({(\rpage-2)*\rescale},{\rpage*\rescale}) -- ({(\rpage-2)*\rescale},{(\rpage-1)*\rescale}) --({(2*\rpage-3)*\rescale},{(2*\rpage-2)*\rescale}) -- ({(2*\rpage-3)*\rescale},{-\abajo*\rescale+\margen}) node[black,anchor=north] {$\scriptscriptstyle 2r-3$};
	
	\node[black,anchor=north] at ({(\rpage-2)*\rescale},{-\abajo*\rescale+\margen}) {$\scriptscriptstyle r-2$};

	
	\draw [->] (0,0)  -- ({(3*\rpage -3 + \derecha)*\rescale},0) node[anchor=north] {$\scriptstyle s$};
	
	
	\draw [->] (0,-{\abajo*\rescale}) -- (0,{(3*\rpage -3 +\arriba)*\rescale})node[anchor=east] {$\scriptstyle t$};
	
	
	\node[fill=black,draw=none,circle,inner sep=.5mm,opacity=1, label={[label distance=-1mm]280:$\scriptscriptstyle r$}]   at ({\rpage*\rescale},{\rpage*\rescale}) {};
	\draw [black,thick,->] ({\rpage*\rescale},{\rpage*\rescale})  -- ({2*\rpage*\rescale},{(2*\rpage-1)*\rescale});
	\end{tikzpicture}
	\caption{
		We have defined $d_r$ on all terms, except for the $r-1$ red dots and the green dot. The red dots are plain pointed sets and the only green dot is an abelian group in page $E_r$ which becomes a pointed set in $E_{r+1}$. There are $r-3$ differentials which cross over the white undefined area, linking a green term in the fringed line to a blue term. We have depicted the first one.		
	}\label{jumping}
\end{figure}

\section{The truncated spectral sequence}\label{truncated}

In the previous section, given a tower of fibrations 
\[\cdots\rightarrow X_{n+1}\rightarrow X_n\rightarrow\cdots\rightarrow X_0\] with additional structure
we have extended the Bousdield--Kan spectral sequence for the computation of the homotopy groups of $\lim_nX_n$. For certain towers of interest it is not even clear whether $\lim_n X_n$ is non-empty. Equivalently, we do not know whether we can choose compatible base points for all $X_n$'s. In order to deal with this problem, we show that the spectral sequence can be defined up to $E_r$ provided we have base points up to $X_{2r+1}$. Moreover, we show that the terms of this truncated spectral sequence contain obstructions to lifting the base point upwards. This was also done by Bousfield in \cite{bousfield_homotopy_1989} in case the tower comes from a cosimplicial space. 

We assume that we have a base point $x_k\in X_k$ for a fixed $k\geq 0$ which induces compatible base points in the previous $X_n$, $n\leq k$. Moreover, Assumptions \ref{assumptions} are satisfied with the following restrictions. The spectra $F_{r,s}$ are defined for $r\geq -1$ and $0\leq s\leq k+2$ and each assumption must hold provided the following respective \emph{additional} constraints are satisfied:
\begin{center}
	\begin{tabular}{c|c}
		Assumption & Constraints  \\ 
		\hline 
		\ref{fibration_unstable}& $m\leq k$ \\ 
		\hline 
		\ref{fibration_stable}& $s+t-2\leq k$ \\ 
		\hline 
		\ref{staircase}& $m\leq k$ \\ 
		\hline 
		\ref{staircase2}& $s+t+u-2\leq k$ \\ 
		\hline 
		\ref{new_axiom_1}& $r+s\leq k$ \\ 
		\hline 
		\ref{new_axiom_2}& $r+s\leq k$ \\ 
		\hline 
	\end{tabular}
\end{center} 
We should remark that the weak equivalence $X_{r+s,r}\simeq\Omega^{\infty}F_{r,s}$ derived in Assumption \ref{fibration_unstable} now only holds for $r+s\leq k$, since it depends on the choice of a base point in $X_{r+s}$. Similarly, the weak equivalence of homotopy fiber sequences in \eqref{staircase} only holds for $r+s+t\leq k$.

\begin{defn}\label{terminos2}
	For $1\leq r\leq\frac{k+3}{2}$, we define $E_r^{st}$ in the following cases as indicated, see Fig.~\ref{3}:
	\renewcommand{\theenumi}{\alph{enumi}}
	\begin{enumerate}
		\item If $t\geq s\geq 0$ and $s\leq k-r+1$, then $E_r^{st}$ is defined as the homology of \eqref{bkterms} or equivalently the cokernel \ref{terms_aternative_2}, or even as the homology of \eqref{terms_aternative} if in addition $t>s$. 
		\item If $t\geq s\geq r-1$ and $s\leq k$ then the abelian group $E_r^{st}$ is the homology of the sequence in Proposition \ref{terms_mixed}.
		\item If $s\geq 2r-3$ and $t\in\mathbb Z$, the abelian group $E_r^{st}$  is the homology of \eqref{terms_new}. We also define it in this way for $t-s\geq 2$ and for $t-s\geq 1$ and $s\geq r-2$.
	\end{enumerate}
\end{defn}

\begin{rem}\label{despuester}
	In (a) we require $s\leq k-r+1$ since the homotopy groups of $X_{s+r-1}$ appear in \eqref{bkterms}. Therefore $X_{s+r-1}$ must be based, so we need $s+r-1\leq k$. This constraint also ensures that $s\leq k$, hence $X_{s,s-1}\simeq \Omega^\infty F_{s-1,1}$ and $E_r^{st}$ is indeed an abelian group for $t=s+1$ too.

	
	Similarly, in (b) we must have $s\leq k$ because we need a base point for $X_s$ for the sequence in Proposition \ref{terms_mixed} to be defined.
	
	The bound $r\leq\frac{k+3}{2}$ comes from the fact that, for the definition of $E_r^{st}$ in case (c), we must ensure that the arrows in \eqref{terms_new} compose to $0$. Looking at the first diagram in the proof of Proposition \ref{terms_well_defined}, we see that we need Assumption \ref{staircase2} for $s+t+u=2r-1$, so we must require $(2r-1)-2\leq k$. 
	
	The terms defined according to (c) are $\Bbbk$-modules and those defined by (b) are abelian groups.

\begin{figure}
$\begin{array}{c}
	\setcounter{trunco}{13}
	\setcounter{rpage}{4}
	\begin{tikzpicture}
	
	
	\draw[step=\rescale,gray,very thin] (0,{-\abajo*\rescale+\margen}) grid ({(3*\rpage -3 +\derecha)*\rescale-\margen},{(3*\rpage -3 +\arriba)*\rescale-\margen});
	
	
	\filldraw[fill=red,draw=none,opacity=0.2] (0,0) -- ({(\trunco-\rpage-1)*\rescale},{(\trunco-\rpage-1)*\rescale}) -- 
	({(\trunco-\rpage-1)*\rescale},{(3*\rpage -3 +\arriba)*\rescale-\margen}) --
	(0,{(3*\rpage -3 +\arriba)*\rescale-\margen}) -- cycle;
	\draw[red!70,thick,opacity=1]  (0,0) -- ({(\trunco-\rpage-1)*\rescale},{(\trunco-\rpage-1)*\rescale})-- ({(\trunco-\rpage-1)*\rescale},{(3*\rpage -3 +\arriba)*\rescale-\margen}) node[black,anchor=south,opacity=1] {$\scriptscriptstyle k-r+1$};
	
	
	\filldraw[fill=Green,draw=none,opacity=0.2]  ({(\rpage-1)*\rescale},{(\rpage-1)*\rescale}) --  ({(\rpage -1)*\rescale},{(3*\rpage -3 +\arriba)*\rescale-\margen}) -- 
({(\trunco-2)*\rescale},{(3*\rpage -3 +\arriba)*\rescale-\margen}) -- ({(\trunco-2)*\rescale},{(\trunco-2)*\rescale}) --cycle;
\draw[Green!70,thick,opacity=1]  ({(\rpage-1)*\rescale},{(3*\rpage -3 +\arriba)*\rescale-\margen}) node[black,anchor=south,opacity=1] {$\scriptscriptstyle r-1$}  --({(\rpage-1)*\rescale},{(\rpage-1)*\rescale}) --  ({(\trunco-2)*\rescale},{(\trunco-2)*\rescale}) -- ({(\trunco-2)*\rescale},{(3*\rpage -3 +\arriba)*\rescale-\margen}) node[black,anchor=south, opacity=1] {$\scriptscriptstyle k$};
	
	
	\foreach  \x in {2,...,\rpage}
	\node[fill=red,draw=none,circle,inner sep=.5mm,opacity=1]   at ({(\x-2)*\rescale},{(\x-2)*\rescale}) {};
	

	\filldraw[fill=blue,draw=none,opacity=0.2]  (0,{(3*\rpage -3 +\arriba)*\rescale-\margen}) --
	(0,2*\rescale) -- ({(\rpage-2)*\rescale},{\rpage*\rescale}) -- ({(\rpage-2)*\rescale},{(\rpage-1)*\rescale}) --({(2*\rpage-3)*\rescale},{(2*\rpage-2)*\rescale}) -- ({(2*\rpage-3)*\rescale},{-\abajo*\rescale+\margen}) -- ({(3*\rpage -3 +\derecha)*\rescale-\margen},{-\abajo*\rescale+\margen}) --
	({(3*\rpage -3 +\derecha)*\rescale-\margen},{(3*\rpage -3 +\arriba)*\rescale-\margen}) -- cycle;
	
	\draw[blue!70,thick] (0,2*\rescale) -- ({(\rpage-2)*\rescale},{\rpage*\rescale}) -- ({(\rpage-2)*\rescale},{(\rpage-1)*\rescale}) --({(2*\rpage-3)*\rescale},{(2*\rpage-2)*\rescale}) -- ({(2*\rpage-3)*\rescale},{-\abajo*\rescale+\margen}) node[black,anchor=north] {$\scriptscriptstyle 2r-3$};
	
	\node[black,anchor=north] at ({(\rpage-2)*\rescale},{-\abajo*\rescale+\margen}) {$\scriptscriptstyle r-2$};
	
	
	\draw [->] (0,0)  -- ({(3*\rpage -3 + \derecha)*\rescale},0) node[anchor=north] {$\scriptstyle s$};
	
	
	\draw [->] (0,-{\abajo*\rescale}) -- (0,{(3*\rpage -3 +\arriba)*\rescale})node[anchor=east] {$\scriptstyle t$};
	\end{tikzpicture}
	\qquad\setcounter{trunco}{9}
	\setcounter{rpage}{4}
	\begin{tikzpicture}
	
	
	\draw[step=\rescale,gray,very thin] (0,{-\abajo*\rescale+\margen}) grid ({(3*\rpage -3 +\derecha)*\rescale-\margen},{(3*\rpage -3 +\arriba)*\rescale-\margen});
	
	
	\filldraw[fill=red,draw=none,opacity=0.2] (0,0) -- ({(\trunco-\rpage-1)*\rescale},{(\trunco-\rpage-1)*\rescale}) -- 
	({(\trunco-\rpage-1)*\rescale},{(3*\rpage -3 +\arriba)*\rescale-\margen}) --
	(0,{(3*\rpage -3 +\arriba)*\rescale-\margen}) -- cycle;
	\draw[red!70,thick,opacity=1]  (0,0) -- ({(\trunco-\rpage-1)*\rescale},{(\trunco-\rpage-1)*\rescale})-- ({(\trunco-\rpage-1)*\rescale},{(3*\rpage -3 +\arriba)*\rescale-\margen}) node[black,anchor=south,opacity=1, xshift=6] {$\scriptscriptstyle k-r+1$};
	

	\filldraw[fill=Green,draw=none,opacity=0.2]  ({(\rpage-1)*\rescale},{(\rpage-1)*\rescale}) --  ({(\rpage -1)*\rescale},{(3*\rpage -3 +\arriba)*\rescale-\margen}) -- 
({(\trunco-2)*\rescale},{(3*\rpage -3 +\arriba)*\rescale-\margen}) -- ({(\trunco-2)*\rescale},{(\trunco-2)*\rescale}) --cycle;
\draw[Green!70,thick,opacity=1]  ({(\rpage-1)*\rescale},{(3*\rpage -3 +\arriba)*\rescale-\margen}) node[black,anchor=south,opacity=1, xshift=-4] {$\scriptscriptstyle r-1$}  --({(\rpage-1)*\rescale},{(\rpage-1)*\rescale}) --  ({(\trunco-2)*\rescale},{(\trunco-2)*\rescale}) -- ({(\trunco-2)*\rescale},{(3*\rpage -3 +\arriba)*\rescale-\margen}) node[black,anchor=south, opacity=1] {$\scriptscriptstyle k$};

	
	\foreach  \x in {2,...,\rpage}
	\node[fill=red,draw=none,circle,inner sep=.5mm,opacity=1]   at ({(\x-2)*\rescale},{(\x-2)*\rescale}) {};
	

	\filldraw[fill=blue,draw=none,opacity=0.2]  (0,{(3*\rpage -3 +\arriba)*\rescale-\margen}) --
	(0,2*\rescale) -- ({(\rpage-2)*\rescale},{\rpage*\rescale}) -- ({(\rpage-2)*\rescale},{(\rpage-1)*\rescale}) --({(2*\rpage-3)*\rescale},{(2*\rpage-2)*\rescale}) -- ({(2*\rpage-3)*\rescale},{-\abajo*\rescale+\margen}) -- ({(3*\rpage -3 +\derecha)*\rescale-\margen},{-\abajo*\rescale+\margen}) --
	({(3*\rpage -3 +\derecha)*\rescale-\margen},{(3*\rpage -3 +\arriba)*\rescale-\margen}) -- cycle;
	
	\draw[blue!70,thick] (0,2*\rescale) -- ({(\rpage-2)*\rescale},{\rpage*\rescale}) -- ({(\rpage-2)*\rescale},{(\rpage-1)*\rescale}) --({(2*\rpage-3)*\rescale},{(2*\rpage-2)*\rescale}) -- ({(2*\rpage-3)*\rescale},{-\abajo*\rescale+\margen}) node[black,anchor=north] {$\scriptscriptstyle 2r-3$};
	
	\node[black,anchor=north] at ({(\rpage-2)*\rescale},{-\abajo*\rescale+\margen}) {$\scriptscriptstyle r-2$};
	
	
	\draw [->] (0,0)  -- ({(3*\rpage -3 + \derecha)*\rescale},0) node[anchor=north] {$\scriptstyle s$};
	
	
	\draw [->] (0,-{\abajo*\rescale}) -- (0,{(3*\rpage -3 +\arriba)*\rescale})node[anchor=east] {$\scriptstyle t$};
	\end{tikzpicture}\\
	\setcounter{trunco}{7}
	\setcounter{rpage}{4}
	\begin{tikzpicture}
	
	
	\draw[step=\rescale,gray,very thin] (0,{-\abajo*\rescale+\margen}) grid ({(3*\rpage -3 +\derecha)*\rescale-\margen},{(3*\rpage -3 +\arriba)*\rescale-\margen});
	
	
	\filldraw[fill=red,draw=none,opacity=0.2] (0,0) -- ({(\trunco-\rpage-1)*\rescale},{(\trunco-\rpage-1)*\rescale}) -- 
	({(\trunco-\rpage-1)*\rescale},{(3*\rpage -3 +\arriba)*\rescale-\margen}) --
	(0,{(3*\rpage -3 +\arriba)*\rescale-\margen}) -- cycle;
	\draw[red!70,thick,opacity=1]  (0,0) -- ({(\trunco-\rpage-1)*\rescale},{(\trunco-\rpage-1)*\rescale})-- ({(\trunco-\rpage-1)*\rescale},{(3*\rpage -3 +\arriba)*\rescale-\margen}) node[black,anchor=south,opacity=1] {$\scriptscriptstyle k-r+1\quad$};
	
	
	\filldraw[fill=Green,draw=none,opacity=0.2]  ({(\rpage-1)*\rescale},{(\rpage-1)*\rescale}) --  ({(\rpage -1)*\rescale},{(3*\rpage -3 +\arriba)*\rescale-\margen}) -- 
	({(\trunco-2)*\rescale},{(3*\rpage -3 +\arriba)*\rescale-\margen}) -- ({(\trunco-2)*\rescale},{(\trunco-2)*\rescale}) --cycle;
	\draw[Green!70,thick,opacity=1]  ({(\rpage-1)*\rescale},{(3*\rpage -3 +\arriba)*\rescale-\margen}) node[black,anchor=south,opacity=1] {$\quad\scriptscriptstyle r-1$}  --({(\rpage-1)*\rescale},{(\rpage-1)*\rescale}) --  ({(\trunco-2)*\rescale},{(\trunco-2)*\rescale}) -- ({(\trunco-2)*\rescale},{(3*\rpage -3 +\arriba)*\rescale-\margen}) node[black,anchor=south, opacity=1] {$\scriptscriptstyle k$};
	
	
	\foreach  \x in {2,...,\rpage}
	\node[fill=red,draw=none,circle,inner sep=.5mm,opacity=1]   at ({(\x-2)*\rescale},{(\x-2)*\rescale}) {};
	

	\filldraw[fill=blue,draw=none,opacity=0.2]  (0,{(3*\rpage -3 +\arriba)*\rescale-\margen}) --
	(0,2*\rescale) -- ({(\rpage-2)*\rescale},{\rpage*\rescale}) -- ({(\rpage-2)*\rescale},{(\rpage-1)*\rescale}) --({(2*\rpage-3)*\rescale},{(2*\rpage-2)*\rescale}) -- ({(2*\rpage-3)*\rescale},{-\abajo*\rescale+\margen}) -- ({(3*\rpage -3 +\derecha)*\rescale-\margen},{-\abajo*\rescale+\margen}) --
	({(3*\rpage -3 +\derecha)*\rescale-\margen},{(3*\rpage -3 +\arriba)*\rescale-\margen}) -- cycle;
	
	\draw[blue!70,thick] (0,2*\rescale) -- ({(\rpage-2)*\rescale},{\rpage*\rescale}) -- ({(\rpage-2)*\rescale},{(\rpage-1)*\rescale}) --({(2*\rpage-3)*\rescale},{(2*\rpage-2)*\rescale}) -- ({(2*\rpage-3)*\rescale},{-\abajo*\rescale+\margen}) node[black,anchor=north] {$\scriptscriptstyle 2r-3$};
	
	\node[black,anchor=north] at ({(\rpage-2)*\rescale},{-\abajo*\rescale+\margen}) {$\scriptscriptstyle r-2$};
	
	
	\draw [->] (0,0)  -- ({(3*\rpage -3 + \derecha)*\rescale},0) node[anchor=north] {$\scriptstyle s$};
	
	
	\draw [->] (0,-{\abajo*\rescale}) -- (0,{(3*\rpage -3 +\arriba)*\rescale})node[anchor=east] {$\scriptstyle t$};
	\end{tikzpicture}
	\end{array}$
	\caption{Regions (a), (b), and (c) in Definition \ref{terminos2} are depicted in red, green, and blue, respectively. The red dots denote plain pointed sets, the rest consists of abelian groups. 
		The red and green regions always have the same width. The with of the overlap of the red and the green regions is one unit smaller than the overlap of the green and the blue regions. These pictures, for $r=\arabic{rpage}$ and $k=11, 7, 5$, respectively, show all possible overlap configurations. The last case only happens for $k$ odd and $r=\frac{k+3}{2}$, and the vertical white band has always width $1$. In particular all terms in and over the fringed line are always defined.}\label{3}
	\end{figure}
\end{rem}

It seems reasonable that differentials in the truncated spectral sequence are only defined up to one page before the terms.

\begin{defn}\label{diferenciales2}
	For $1\leq r\leq\frac{k+1}{2}$, the abelian group homomorphism
	$$d_r\colon  E^{st}_r\To E^{s+r,t+r-1}_r$$ 
	is defined for the following values of $s$ and $t$ by chasing the indicated diagram:
	\begin{enumerate}
		\item For $t>s\geq 0$ and $s\leq k-r+1$, the diagram is obtained from \eqref{chase}, or equivalently from \eqref{differential_alternative}, by replacing the last arrow with $\pi_{t-s}$ of the map $X_{s+r-1}\rightarrow\Omega^{\infty-1}F_{s+r-1,1}$ or  $X_{s+r-1,s-1}\rightarrow\Omega^{\infty-1}F_{s+r-1,1}$ in Assumption \ref{fibration_unstable}, respectively.
		\item In the four cases (a)--(d) of Definition \ref{ss_extension} the diagram is \eqref{differential_new}.
	\end{enumerate}
\end{defn}

The constraint $r\leq\frac{k+1}{2}$ comes from the fact that the equation $d_r^2=0$ holds under the same conditions under which $E_{r+1}$ is defined.

One can check as in Section \ref{general} that all definitions are consistent and the homology of $d_r$ is $E_{r+1}$ in the indicated range. Moreover, the differentials are $\Bbbk$-module morphisms for $t-s\geq 2$, $t>s\geq r-1$, and $t\geq s\geq 2r-3$.

It is also worth to note that our truncated spectral sequence coincides with the spectral sequence of the following essentially constant pointed tower
\[\cdots=X_k\rightarrow\cdots\rightarrow X_0\rightarrow X_{-1}=\ast\]
for $s\leq k-r+1$ (red region). The former injects in the latter for $k-r+1< s\leq k$ (green region minus red region). The latter vanishes for $s\geq k+1$ (blue region minus green and red regions).

We now define obstructions in the truncated spectral sequence to lifting points along bonding maps.

\begin{prop}\label{obstructions}
	Given $x\in X_k$, $k\geq 0$, and $\frac{k-1}{2}\leq l\leq k$, the image of $x$ along the map $X_k\rightarrow \Omega^{\infty-1}F_{k,1}$ in Assumption \ref{fibration_unstable} represents an element in $E^{k+1,k}_{k+1-l}$ which vanishes if and only if there exists $y\in X_{k+1}$ such that the image of $y$ along the bonding map $X_{k+1}\rightarrow X_l$ coincides with the image of $x$ along $X_{k}\rightarrow X_l$.
\end{prop}

\begin{proof}	
	We take $x=x_k\in X_k$ as the base point for the definition of the truncated spectral sequence. 
	The term $E^{k+1,k}_{k+1-l}$ lies right below the fringed line, not in the range of definition of the Bousfield--Kan spectral sequence. It is in  the blue region of Fig.~\ref{3}, more precisely it is given by Definition \ref{terminos2} (c) since $k+1-l\geq 1$ and $k+1\geq 2(k+1-l)-3$.
	
	In order to see that the image of $x$ represents an element in $E^{k+1,k}_{k+1-l}$ it suffices to check that the map defined on $\pi_0$ by the composite
	\[X_k\longrightarrow\Omega^{\infty-1}F_{k,1}\longrightarrow\Omega^{\infty-2}F_{k+1,k-l}\]
	takes $x$ to $0$. 
	The following instance of \eqref{staircase}, which is defined for $k-1-l\leq m\leq k$,
		\[\xymatrix{X_{2k+1-l,m}\ar[r]&X_{k+1,m}\ar[r]\ar[d]&X_{k,m}\ar[d]\\
		&\Omega^{\infty-1} F_{k+1,k-l}\ar[r]&\Omega^{\infty-1} F_{k,k+1-l}\ar[d]\\&&\Omega^{\infty-1} F_{k,1}}\]
	proves it, since $x$ lies in $X_{k,m}$ for all $m\leq k$ (we base $X_m$ at the image of $x$ along the bonding map $X_k\rightarrow X_m$).
	
	We can consider yet another instance of Assumption \ref{staircase}, which is defined for
	$(k+1-l)-2\leq m\leq l$, in particular for $m=l$,
	\[\xymatrix{X_{k+1,m}\ar[r]&X_{k,m}\ar[r]\ar[d]&X_{l,m}\ar[d]\\
		&\Omega^{\infty-1} F_{k,1}\ar[r]&\Omega^{\infty-1} F_{l,k+1-l}\ar[d]\\&&\Omega^{\infty-1} F_{l,k-l}}\]
	to prove the obstruction property (the claims about the existence of $y$).
\end{proof}

\begin{rem}
	On the one hand, it seems convenient to keep $l$ big since in that case $X_k$ is closer to $X_l$. On the other hand, the smaller $l$ is, the bigger chances the obstruction has to vanish, since it lives in a later page of the spectral sequence. The farthest obstruction lives in $E^{k+1,k}_{\lfloor\frac{k+3}{2}\rfloor}$. Any obstruction in a previous page $E_r$, $r<\lfloor\frac{k+3}{2}\rfloor$, is a cycle for $d_r$ and its homology class is the obstruction in $E_{r+1}$.
\end{rem}

\section{The tower of mapping spaces}

Let $\opnot O$ be a DG-operad. The two previous sections can be applied to the tower of fibrations with
\[X_n=\rmap_{\dgoperad}(\opnot A_{n+2},\opnot O),\]
whose (homotopy) limit is 
\[\lim_n\rmap_{\dgoperad}(\opnot A_{n},\opnot O)=\rmap_{\dgoperad}(\opnot A_{\infty},\opnot O).\]
Given a base point here, the spectra in Assumptions \ref{assumptions} are
\[F_{r,s}=\hom_{\bimod{\opnot A_m}}(\opnot B_{m,r+2,s},U_{\opnot A_m}\opnot O),\]
or rather the $H\Bbbk$-module spectra associated to these chain complexes. 
Here $s\leq m\leq r+2$. This definition is independent of the choice of $m$ since for $m\leq n$ within this range, $\opnot B_{n,r+2,s}$ is the extension of scalars of $\opnot B_{m,r+2,s}$ along $\opnot A_m\rightarrow \opnot A_n$. Each assumption holds by the comments or results from Section \ref{adecomp} indicated in the following table, taking mapping spaces/spectra and using the Quillen pair \eqref{quillen_adjunction},
\begin{center}
\begin{tabular}{c|c}
	Assumption & Results  \\ 
	\hline 
	\ref{fibration_unstable}& Proposition \ref{hcs1} \\ 
	\hline 
	\ref{fibration_stable}& $\opnot B_{m,r+2,s+t}/\opnot B_{m,r+2,s}=\opnot B_{m,r+s+2,t}$ \\ 
	\hline 
	\ref{staircase}& Proposition \ref{stair1} \\ 
	\hline 
	\ref{staircase2}& $\opnot B_{m,r+2,s}\subset \opnot B_{m,r+2,s+t}\subset \opnot B_{m,r+2,s+t+u}$ \\ 
	\hline 
	\ref{new_axiom_1}& Corollary \ref{torof} \\ 
	\hline 
	\ref{new_axiom_2}& Corollary \ref{torof2}\\ 
	\hline 
\end{tabular}
\end{center} 
The same applies when we just have a base point in $X_k=\rmap_{\dgoperad}(\opnot A_{k+2},\opnot O)$, under the extra constraints in Section \ref{truncated}. 

We are going to compute differentials, mostly given by Definition \ref{ss_extension}. For this, we will need the following observation, which follows directly from Proposition \ref{hcs2}.

\begin{lem}\label{derecha}
	For this section's tower, the second map in \eqref{differential_new}, $r\leq m\leq s+1$,
	\begin{align*}
	\pi_{t-s}\hom_{\bimod{\opnot A_m}}(\opnot B_{m,s+1,r},U_{\opnot A_m}\opnot O)\rightarrow
	&\pi_{t-s-1}\hom_{\bimod{\opnot A_m}}(\opnot B_{m,s+r+1,1},U_{\opnot A_m}\opnot O)\\
	&=\pi_{t-s}\hom_{\bimod{\opnot A_m}}(\Sigma^{-1}\opnot B_{m,s+r+1,1},U_{\opnot A_m}\opnot O)
	\end{align*}
	is induced by $g_{m,s+1,r,1}\colon \Sigma^{-1}\opnot B_{m,s+r+1,1}\rightarrow \opnot B_{m,s+1,r}$.
\end{lem}

We now concentrate in the case $\opnot O=\eop{X}$, the endomorphism operad of a chain complex $X$ with trivial differential, $X=H_*X$. In many situations, e.g.~if $\Bbbk$ is a field or if $X$ has projective homology, even if the differential is nontrivial $X$ contains $H_*X$ as a strong deformation retract. Such a strong deformation retraction gives rise to a quasi-isomorphism $\eop{H_*X}\r\eop{X}$. Therefore the results in this section still apply, since the tower in this section is natural and homotopy invariant in the target operad $\opnot O$. We choose a base point in $\rmap_{\dgoperad}(\opnot A_{\infty},\eop{X})$, corresponding to a minimal $A_\infty$-algebra structure $B$ on $\Sigma^{-1}X$ defined by the images $m_n\colon X^{\otimes n}\rightarrow X$ of the generators $\mu_n$ along the base point $\opnot A_{\infty}\rightarrow\eop{X}$. This base point induces base points in the whole tower (minimal $A_n$-structures). We denote by $A$ the homology graded algebra of $B$, i.e.~the chain complex $\Sigma^{-1}X$ endowed with the associative $\Bbbk$-algebra structure with shifted multiplication $m_2$.

\begin{prop}\label{e1}
	The $E_1$-terms of our extended spectral sequence for the computation of the homotopy groups of $\rmap_{\dgoperad}(\opnot A_{\infty},\eop{X})$ are the Hochschild chains,
	\[E^{st}_1=\hc{s+2}{-t}{A},\qquad s\geq 0,\quad t\in\mathbb Z.\]
	Moreover, the first page differential $d_1\colon E_1^{st}\r E_1^{s+1,t}$ is, up to sign, the Hochschild differential, more precisely, in the whole right half plane
	\[d_1=(-1)^{t-s}[m_2,-].\]
\end{prop}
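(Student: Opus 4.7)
The plan is to use the second formula \eqref{formula2} (equivalently \eqref{formuladual}) since it expresses everything in terms of strictly linear DG-operads under $\opnot A_m$, for which both the mapping spectrum and the relevant structure maps are directly computable.

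First, I will identify the terms. Setting $r=1$ in \eqref{formula2}, the top vertex becomes $\pi_{t-s+1}\rmap^s_{\opnot A_m}(\opnot B_{m,s+1,0},\opnot U)$ and the bottom becomes $\pi_{t-s-1}\rmap^s_{\opnot A_m}(\opnot B_{m,s+2,0},\opnot U)$; both vanish because $\opnot B_{m,r,0}=\opnot A_m$ has no relative generators. Hence $E_1^{st}=\pi_{t-s}\rmap^{s}_{\opnot A_{m}}(\opnot B_{m,s+1,1},\opnot U)$. Now $\opnot B_{m,s+1,1}$ is strictly linear over $\opnot A_m$ on a single generator $\bar\mu_{s+2}$ of arity $s+2$ and degree $-1$, and the defining formula yields $d(\bar\mu_{s+2})=0$ since the corresponding summation is empty. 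Consequently, for $k\in\mathbb Z$, the explicit cylinder and suspension of Section \ref{homoto} give
\[[\Sigma^{k}_{\opnot A_{m}}\opnot B_{m,s+1,1},\opnot U]_{\opnot A_{m}}= H_{k-1}(\opnot U(s+2)).\]
Setting $k=t-s$ and specializing to $\opnot U=\eop{X}$ with trivial differential, this reads $E_1^{st}=\eop{X}(s+2)_{t-s-1}=\hc{s+2}{-t}{A}$ under the identification $\hc{p}{q}{A}=\eop{\Sigma A}(p)_{-p-q+1}$.

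For the differential, Remark \ref{caza2.5} identifies $d_1\colon E_1^{st}\to E_1^{s+1,t}$ with $(\Sigma^{t-s}_{\opnot A_m}g_{m,s+1,1,1})^*$. Proposition \ref{hcs2} gives $g_{m,s+1,1,1}(\sigma^{-1}\bar\mu_{s+3})=[\mu_2,\bar\mu_{s+2}]$ since the only admissible index pair is $(p,q)=(2,s+2)$. Iterating the rule $\sigma(x\circ_i y)=(-1)^{|x|}x\circ_i\sigma(y)$ for $y$ in the module part, and using that $|\mu_2|=-1$ is odd, I obtain $\sigma^n[\mu_2,\bar\mu_{s+2}]=(-1)^n[\mu_2,\sigma^n\bar\mu_{s+2}]$, so
\[(\Sigma^{t-s}_{\opnot A_{m}}g_{m,s+1,1,1})(\sigma^{t-s-1}\bar\mu_{s+3})=(-1)^{t-s}[\mu_2,\sigma^{t-s}\bar\mu_{s+2}].\]
Any representative of $E_1^{st}$ is a DG-operad map under $\opnot A_m$ sending $\sigma^{t-s}\bar\mu_{s+2}$ to a cycle $u\in\eop{X}(s+2)_{t-s-1}$ and $\mu_2$ to $m_2=\phi(\mu_2)$; precomposition therefore sends $u$ to $(-1)^{t-s}[m_2,u]$, which is the Hochschild differential up to the sign $(-1)^{t-s}$.

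The range of definition of $d_1$ is covered by cases (b)--(d) of Definition \ref{diferenciales1} via \eqref{caza2} and by \eqref{caza1} for $t>s\geq 0$; together these give precisely the range in the statement, and agreement of the two definitions on the overlap is guaranteed by Proposition \ref{weldef}, so the formula $d_1=(-1)^{t-s}[m_2,-]$ holds uniformly. The main obstacle is keeping the signs straight through the iterated relative suspensions of strictly linear maps, but once the identity $\sigma^n[\mu_2,x]=(-1)^n[\mu_2,\sigma^n x]$ is established the rest is bookkeeping.
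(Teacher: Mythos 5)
Your proposal is correct and follows essentially the same route as the paper: identify $E_1^{st}$ with $[\Sigma^{t-s}_{\opnot A_m}\opnot B_{m,s+1,1},\eop{X}]_{\opnot A_m}\cong\eop{X}(s+2)_{t-s-1}$ using that $\opnot B_{m,s+1,1}$ is free on the single closed generator $\bar\mu_{s+2}$, then read off $d_1$ from $(\Sigma^{t-s}_{\opnot A_m}g_{m,s+1,1,1})^*$ via Remark \ref{caza2.5} and Proposition \ref{hcs2}, with the sign $(-1)^{t-s}$ coming from commuting $\sigma^{t-s}$ past $\mu_2$. The only (harmless) additions beyond the paper's argument are your explicit observation that the outer terms of \eqref{formula2} vanish for $r=1$ and the intermediate general formula $H_{k-1}(\opnot U(s+2))$.
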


\begin{proof}
	In page $r=1$ all terms, which are defined in the whole right half-plane, are given by Definition \ref{ss_extension}. Hence
	\[E^{st}_1=\pi_{t-s}F_{s-1,1}.\]
	Since $\opnot B_{m,s+1,1}$ is free on a generator $\bar\mu_{s+2}$ of arity $s+2$ and degree $-1$, $F_{s-1,1}$ is the $\Bbbk$-module spectrum associated to the following chain complex of $\Bbbk$-modules with trivial differential, 
	\[\Sigma\eop{X}(s+2)=\hc{s+2}{-\ast-s}{A}.
	\]
	The spectral sequence differential $d_1$, when defined, is also given by Definition \ref{ss_extension}. The first map in \eqref{differential_new} is the identity for $r=1$, hence $d_1$ is fully defined by the second one. By Lemma \ref{derecha} it is induced by $g_{m,s+1,r,1}$, which is defined by $\sigma^{-1}\bar{\mu}_{s+r+2}\mapsto[\mu_2,\bar{\mu}_{s+r+1}]$. Hence we are done. (The sign $(-1)^{t-s}$ comes from the definition of the $(t-s)$-fold suspension of an $\opnot A_m$-module.)
\end{proof}

Denote by \[\hz{p}{q}{A}\]
the modules of \emph{cocycles} in the Hochschild complex.

\begin{cor}\label{e2}
	For $s>0$, the $E_2$-terms of this section's extended spectral sequence are
	\[E^{st}_2=\hh{s+2}{-t}{A}\]
	for $s\geq 1$ and $t\in\mathbb Z$ and 
	\[E^{0t}_2=\hz{2}{-t}{A}\]
	for $t>s=0$.
\end{cor}

The remaining $E_2$-term is the following.

\begin{prop}\label{e200}
	The pointed set $E_{2}^{00}$ is the set of graded associative algebra structures on $ \Sigma^{-1}X$ and the base point is $A$.
\end{prop}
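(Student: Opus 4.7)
The plan is to apply Lemma \ref{especial} with $r=2$, $s=t=0$, which gives
\[E_2^{00}=\im\bigl([\opnot A_3,\eop{X}]\To[\opnot A_2,\eop{X}]\bigr)\cap\ker\bigl([\opnot A_2,\eop{X}]\To[\opnot A_1,\eop{X}]\bigr).\]
Since $\opnot A_1=\opnot F(\emptyset)$ is the initial DG-operad, $[\opnot A_1,\eop{X}]$ is a singleton, so the kernel in the intersection is all of $[\opnot A_2,\eop{X}]$. Thus $E_2^{00}$ equals the image of the restriction $[\opnot A_3,\eop{X}]\to[\opnot A_2,\eop{X}]$.

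Next I would identify the two sets in sight. A DG-operad map $\opnot A_2\to\eop{X}$ is exactly an element $m_2\in\eop{X}(2)_{-1}$, since $\opnot A_2=\opnot F(\mu_2)$ has trivial differential ($d(\mu_2)=0$ by Definition \ref{am}), and since $\eop{X}$ also has trivial differential, the condition $d\phi(\mu_2)=\phi d(\mu_2)$ is automatic. For the homotopy relation, $\opnot A_2$ is strictly linear relative to the initial operad, so the cylinder satisfies $d(\sigma\mu_2)=i_0\mu_2-i_1\mu_2$; a homotopy between $m_2^0$ and $m_2^1$ forces $m_2^0-m_2^1=d(h_2)=0$ in $\eop{X}$. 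Hence $[\opnot A_2,\eop{X}]\cong\eop{X}(2)_{-1}$, which is the set of degree $-1$ multiplications on $X$, equivalently the set of all shifted-multiplication maps $m_2\colon\Sigma A\otimes\Sigma A\to\Sigma A$ of degree $-1$ on $A=\Sigma^{-1}X$.

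Now I identify the image. A map $\opnot A_3\to\eop{X}$ is a pair $(m_2,m_3)\in\eop{X}(2)_{-1}\times\eop{X}(3)_{-1}$ subject to $d(m_2)=0$ (automatic) and
\[0=\phi d(\mu_3)=\phi(\mu_2\{\mu_2\})=m_2\{m_2\}.\]
By the discussion in Section \ref{principio}, the equation $m_2\{m_2\}=0$ is precisely the associativity of the shifted multiplication, i.e.~the condition that $m_2$ defines a graded associative algebra structure on $\Sigma^{-1}X$. The restriction along $\opnot A_2\subset\opnot A_3$ sends $(m_2,m_3)\mapsto m_2$, and choosing $m_3=0$ shows every such associative $m_2$ lies in the image. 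Therefore $E_2^{00}$ is exactly the set of graded associative algebra structures on $\Sigma^{-1}X$.

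The only remaining point is to check the base point. By construction in Section \ref{ss}, all mapping spaces are based at the restrictions of the fixed $\phi\colon\opnot A_\infty\to\eop{X}$. The restriction $\phi|_{\opnot A_2}$ sends $\mu_2$ to the shifted multiplication of $A$, so the base point of $E_2^{00}$ corresponds to $A$ itself. The main technical obstacle is the homotopy computation for $[\opnot A_2,\eop{X}]$: one must verify that the potential freedom in choosing $\sigma\mu_2\mapsto h_2$ cannot create nontrivial identifications, which reduces to the observation that $\eop{X}$ has vanishing differential — once this is pinned down, the rest is a direct unpacking of Lemma \ref{especial} and Definition \ref{am}.
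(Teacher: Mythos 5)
Your proof is correct and takes essentially the same route as the paper: both start from Lemma \ref{especial} (with the observation that $\opnot A_{s-r+2}=\opnot A_0$ is initial, so the kernel condition is vacuous) and both reduce the identification of the image to the equation $m_2\{m_2\}=0$, which characterizes associativity of the shifted multiplication. The only difference is cosmetic: the paper identifies the image of $[\opnot A_3,\eop{X}]\to[\opnot A_2,\eop{X}]$ with the kernel of the quadratic map $m\mapsto m\{m\}$ via the cofiber sequence of Proposition \ref{hcs1}, whereas you obtain the same description by directly unpacking what a DG-operad map $\opnot A_3\to\eop{X}$ is and noting that $m_3=0$ always provides a lift; both computations hinge on $d(\mu_3)=\mu_2\{\mu_2\}$ and the vanishing differential of $\eop{X}$.
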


\begin{proof}
	By \eqref{low_dimensions}, $E_{2}^{00}$ is the set of (minimal) $A_2$-algebra structures on $\Sigma^{-1}X$ which extend to an $A_3$-algebra. An $A_2$-algebra structure is the same as a binary (shifted) multiplication $m_2$. Since $d(\mu_3)=\mu_2\{\mu_2\}$ and $X$ has trivial differential, the $A_2$-algebra structure can be extended to an $A_3$-structure if and only if $0=m_2\{m_2\}$, which is equivalent to associativity.
\end{proof}

This does not contradict Proposition \ref{e1} since $E_2^{00}$ is not defined by the homology of $d_1$, see Proposition \ref{differentials_game}.

The ternary map $m_3$ of the base minimal $A_\infty$-structure on $\Sigma^{-1}X$, which is a Hochschild cochain $m_3\in\hc{3}{-1}{A}$, is actually a cocycle. This follows from minimality, since, in $\opnot A_\infty$, $d(\mu_4)=[\mu_2,\mu_3]$, hence in $\eop{X}$, which has trivial differential, $0=[m_2,m_3]$. Its Hochschild cohomology class, called \emph{universal Massey product}, will be denoted by 
\[\{m_3\}\in\hh{3}{-1}{A}.\]

\begin{thm}\label{e2dif}
The second page differential  $d_2\colon E_2^{st}\r E_2^{r+2,t+1}$ of this section's extended spectral sequence is, for  $s\geq 1$ and $t\in\mathbb Z$,
\[d_2=(-1)^{t-s}[\{m_3\},-]\colon \hh{s+2}{-t}{A}\To \hh{s+4}{-t-1}{A};\]
for $s=0$ and $t>1$,
\[\xymatrix@C=15pt{d_{2}\colon \hz{2}{-t}{A}\ar@{->>}[r]& \hh{2}{-t}{A}\ar[rrrr]^-{(-1)^{t}[\{m_3\},-]}&&&& \hh{4}{-t-1}{A};}\]
and for $s=0$ and $t=1$,
\[\xymatrix@C=15pt{d_{2}\colon \hz{2}{-1}{A}\ar@{->>}[r]& \hh{2}{-1}{A}\ar[rrrr]^-{x\;\mapsto\; -x\smile x-[\{m_3\},x]}&&&& \hh{4}{-2}{A}.}\]
\end{thm}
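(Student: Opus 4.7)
The strategy is to evaluate $d_{2}$ via the explicit chase in Definition \ref{diferenciales1}, using the second description \eqref{caza3} wherever it applies and the first description \eqref{caza1} only for the corner case $s=0$, $t=1$. Throughout I would take $m=3$, which is the minimal allowed value for the second formula and suffices for all our needs.

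Consider first the cases $t\geq s\geq 1$, or $s\geq 2$ and $t\in\mathbb Z$, or $s=0$ and $t>1$, where \eqref{caza3} applies. I would represent a Hochschild cocycle $\alpha\in\hc{s+2}{-t}{A}$ by the relative map $\varphi\colon\Sigma^{t-s}_{\opnot A_{3}}\opnot B_{3,s+1,1}\to\eop{X}$ sending $\sigma^{t-s}\bar\mu_{s+2}\mapsto\alpha$. The cocycle condition $[m_{2},\alpha]=0$ permits lifting $\varphi$ to $\tilde\varphi\colon\Sigma^{t-s}_{\opnot A_{3}}\opnot B_{3,s+1,2}\to\eop{X}$ by further setting $\sigma^{t-s}\bar\mu_{s+3}\mapsto 0$; any alternative value $\beta$ for this second generator alters the outcome only by the Hochschild coboundary $(-1)^{t-s}[m_{2},\beta]$. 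By Proposition \ref{hcs2}, $g_{3,s+1,2,1}(\sigma^{-1}\bar\mu_{s+4})=[\mu_{2},\bar\mu_{s+3}]+[\mu_{3},\bar\mu_{s+2}]$, and iterating the identity $\sigma[\mu_{p},\bar\mu_{q}]=-[\mu_{p},\sigma\bar\mu_{q}]$ (which follows from $\abs{\mu_{p}}=-1$ and the compatibility of $\sigma$ with the module structure) a total of $t-s$ times gives
\[
(\Sigma^{t-s}_{\opnot A_{3}}g_{3,s+1,2,1})(\sigma^{t-s-1}\bar\mu_{s+4})=(-1)^{t-s}\bigl([\mu_{2},\sigma^{t-s}\bar\mu_{s+3}]+[\mu_{3},\sigma^{t-s}\bar\mu_{s+2}]\bigr).
\]
Precomposing $\tilde\varphi$ yields $(-1)^{t-s}[m_{3},\alpha]$, whose Hochschild cohomology class is $(-1)^{t-s}[\{m_{3}\},[\alpha]]$, settling the first two formulas of the theorem.

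For the remaining case $s=0$, $t=1$, the condition $t>1$ of case~(e) in Definition \ref{diferenciales1} fails, so I must use the first formula \eqref{caza1}. Via Corollary \ref{torus2} and \eqref{tsl}, the tori identifications $T\opnot A_{3}=\Sigma_{\opnot A_{3}}\opnot B_{3,1,2}$ and $\opnot B_{3,3,1}=T\Sigma^{-1}\opnot B_{1,3,1}\cup_{\Sigma^{-1}\opnot B_{1,3,1}}\opnot A_{3}$ turn $\pi_{1}f_{1,3,1}^{*}$ into precomposition along the map $Tf_{1,3,1}\colon\opnot B_{3,3,1}\to\Sigma_{\opnot A_{3}}\opnot B_{3,1,2}$, whose value is computed in Proposition \ref{toroi}:
\[
Tf_{1,3,1}(\bar\mu_{4})=-[\mu_{2},\sigma\bar\mu_{3}]-[\mu_{3},\sigma\bar\mu_{2}]-\mu_{2}\{\sigma\bar\mu_{2},\sigma\bar\mu_{2}\}.
\]
Lifting $[\alpha]\in\hz{2}{-1}{A}$ to an element $(\alpha,\gamma)\in\pi_{1}\rmap(\opnot A_{3},\eop{X})$, i.e., the relative map sending $\sigma\bar\mu_{2}\mapsto\alpha$ and $\sigma\bar\mu_{3}\mapsto\gamma$ for any $\gamma\in\eop{X}(3)_{-1}$, the precomposition produces $-[m_{2},\gamma]-[m_{3},\alpha]-m_{2}\{\alpha,\alpha\}$. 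Since $[m_{2},\gamma]$ is a Hochschild coboundary and $\alpha\smile\alpha=m_{2}\{\alpha,\alpha\}$ (as $\abs{\alpha}=0$), the class in $\hh{4}{-2}{A}$ is exactly $-\alpha\smile\alpha-[\{m_{3}\},\alpha]$, as required.

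The main technical obstacle I foresee is the careful bookkeeping of signs under iterated suspension of Lie brackets, and confirming that the quadratic ``anomaly'' term $\mu_{2}\{\sigma\bar\mu_{2},\sigma\bar\mu_{2}\}$ in $Tf_{1,3,1}$ is precisely what reconciles the generic formula with the edge-case formula at $s=0$, $t=1$. Consistency of the two approaches in the region where both apply is however already guaranteed by Proposition \ref{weldef}, and indeed the second identity of Proposition \ref{toroi}, namely $\Sigma_{\opnot A_{3}}Tf_{1,3,1}=\Sigma_{\opnot A_{3}}^{2}g_{3,1,2,1}$, shows that the quadratic term is killed after one further suspension, thus matching the generic formula precisely for $s=0$ and $t>1$. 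Independence of the class from the choice of cocycle representative of $[\alpha]$ reduces in the generic case to the derivation identity $[m_{3},[m_{2},\beta']]=-[m_{2},[m_{3},\beta']]$, a consequence of the graded Jacobi identity combined with $[m_{3},m_{2}]=0$.
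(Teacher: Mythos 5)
Your proposal is correct and follows essentially the same route as the paper's proof: the generic cases are handled by chasing \eqref{caza3} with the lift $\sigma^{t-s}\bar\mu_{s+3}\mapsto 0$ justified by the cocycle condition, and the case $(s,t)=(0,1)$ by chasing \eqref{caza1} through the torus identification and the formula for $Tf_{1,3,1}$ from Proposition \ref{toroi}, producing the quadratic term $-x\smile x$. The only (immaterial) differences are that you fix $m=3$ where the paper works with general $m$, and that you spell out the independence of choices, which the paper delegates to Proposition \ref{weldef}.
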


\begin{proof}
	We start with the first two cases, where $d_2$ is given by Definition \ref{ss_extension}. 
	An element in $x\in E_{2}^{st}$ is (represented by) a Hochschild cocycle in $\hc{s+2}{-t}{A}$, which is the same as a map of $\opnot A_m$-modules $\Sigma^{t-s}\opnot B_{m,s+1,1}\r\eop{X}$ (the cocyle is the image of the generator $\sigma^{t-s}\bar{\mu}_{s+2}$ of the source). By the cocycle condition, this map extends to $\Sigma^{t-s}\opnot B_{m,s+1,2}\r\eop{X}$, defined trivially on the top generator $\sigma^{t-s}\bar{\mu}_{s+3}$. By Lemma \ref{derecha}, the element $d_2(x)$ is represented by the composition of this extension and the $(t-s)$-fold suspension of $g_{m,s+1,2,1}\colon \Sigma^{-1}\opnot B_{m,s+3,1}\rightarrow \opnot B_{m,s+1,2}$, which is defined by $\sigma^{-1}\bar{\mu}_{s+4}\mapsto [\mu_2,\bar{\mu}_{s+3}]+[\mu_3,\bar{\mu}_{s+2}]$. Since the extension was trivial on $\sigma^{t-s}\bar{\mu}_{s+3}$, we deduce the claim. (The sign $(-1)^{t-s}$, as in the proof of Proposition \ref{e1}, comes from the $(t-s)$-fold suspension.)

In the remaining case, $d_2\colon E_2^{01}\r E_2^{22}$ is defined by chasing
\[\pi_1\rmap(\opnot A_2,\eop{X})\leftarrow \pi_1\rmap(\opnot A_3,\eop{X})
\rightarrow\pi_1\rmap(L\Sigma^{-1} \opnot B_{1,3,1},\eop{X}),
\]
where $\rmap=\rmap_{\dgoperad}$. These arrows are induced by the inclusion $\opnot A_2\subset\opnot A_3$ and the map $f_{1,3,1}\colon L \Sigma^{-1}\opnot B_{1,3,1} \rightarrow\opnot A_3$, respectively. We can replace $\pi_1$ with $\pi_0\Omega$. Loop spaces of mapping spaces can be computed by taking tori in the sources. Hence, by Corollary \ref{torof}, we can also chase the diagram obtained by taking $\pi_0\rmap_{\opnot A_3\downarrow\dgoperad}(-,\eop{X})$ on 
\[L_{\opnot A_3}\Sigma\opnot B_{3,1,1}\subset L_{\opnot A_3}\Sigma\opnot B_{3,1,2}
\stackrel{Tf_{1,3,1}}\longleftarrow L_{\opnot A_3}\opnot B_{3,3,1}.
\]
We start as above with a Hochschild cocycle $x\in E^{01}_2=\hc{2}{-1}{A}$, which is the same as an operad map $L_{\opnot A_3}\Sigma\opnot B_{3,1,1}\rightarrow \eop{X}$ ($x$ is the image of the generator $\sigma\bar\mu_2$). We extend it to $L_{\opnot A_3}\Sigma\opnot B_{3,1,2}\rightarrow \eop{X}$, defining it trivially on the top generator $\sigma\bar{\mu}_{3}$, and we precompose with $Tf_{1,3,1}$, which is given by 
$\bar\mu_4\mapsto
-[\mu_2,\sigma\bar\mu_3]-[\mu_3,\sigma\bar\mu_2]-\mu_2\{\sigma\bar\mu_2,\sigma\bar\mu_2\}$, see Corollary \ref{torus}. We derive as above that $d_2(x)$ is represented by $-[m_3,x]-m_2\{x,x\}$ (recall that $m_2\{x,x\}=x\smile x$).
\end{proof}

\begin{rem}
	Concerning the loss of $\Bbbk$-module structures as $r$ increases, it is worth to notice that $d_2\colon E_2^{01}\r E_2^{22}$, computed in Theorem \ref{e2dif}, is only an abelian group morphism between $\Bbbk$-modules since the map $x\mapsto -x\smile x-[\{m_3\},x] $ is not $\Bbbk$-linear because of the first summand (unless $2$ is invertible in $\Bbbk$, since $2\cdot x\smile x=0$). 
\end{rem}

The previous computations of the first pages of the spectral sequence also hold for the truncated spectral sequence when defined.

We now consider the obstructions in Proposition \ref{obstructions} for this section's tower of mapping spaces and low values of $k$ and $l$, $(k,l) = (0,0), (1,0), (1,1), (2,1)$. For $(k,l) = (0,0)$ the obstruction is the Hochschild cochain $m_2\{m_2\}$, reflecting the well known fact that an $A_2$-structure can be extended to an (always minimal) $A_3$-structure if and only if the binary multiplication is associative. For $(k,l) = (1,0)$ the obstruction is always trivial: an $A_3$-algebra has an associative binary multiplication which can always be extended to an $A_4$-algebra structure (indeed to an $A_\infty$-algebra structure, taking $m_n=0$ for all $n\geq 3$). For $(k,l) = (1,1)$ the obstruction is $[m_2,m_3]$, meaning that an $A_3$-algebra structure can be extended to an $A_4$-structure whenever $m_3$ is a Hochschild cocycle (in that case taking $m_4=0$). The following result computes the first possibly non-trivial obstruction beyond the first page of the spectral sequence, obtained for $(k,l) = (2,1)$. It can be computed from the Gerstenhaber squaring operation and the universal Massey product.

\begin{prop}\label{quadratic1}
	Given an $A_4$-algebra $(\Sigma^{-1}X,m_2,m_3,m_4)$, the obstruction to the existence of an $A_5$-algebra with underlying $A_3$-algebra $(\Sigma^{-1}X,m_2,m_3)$ is
	\[\gsquare(\{m_3\})\in E_2^{3,2}=\hh{5}{-2}{A}.\]
\end{prop}

\begin{proof}
	The $A_4$-structure is classified by a map $\opnot A_4\rightarrow\eop{X}$ and the obstruction is represented by its precomposition with $f_{3,4,1}$, defined by $\sigma^{-1}\bar{\mu}_5\mapsto [\mu_2,\mu_4]+\mu_3\{\mu_3\}$. Hence the page $E_1$ obstruction (for $(k,l)=(2,2)$) is $d_1(m_4)+m_3\{m_3\}\in E_1^{3,2}=\hc{5}{-2}{A}$ and the obstruction in $E_2^{3,2}=\hh{5}{-2}{A}$, which is its cohomology class, is $\gsquare(\{m_3\})$, represented by $m_3\{m_3\}$, since $d_1(m_4)$ is a coboundary.
\end{proof} 

We conclude this section with an identification of the homotopy groups of the homotopy limit of our tower of fibrations.

\begin{defn}\label{hhinfty}
	Let $B$ be an $A$-infinity algebra structure on a chain complex $\Sigma^{-1} X$, defined by maps $m_n\colon X^{\otimes n}\rightarrow X$, $n\geq 2$. The \emph{Hochschild complex} $C^*(B)$ \cite{kontsevich_notes_2009} is the brace algebra $\prod_{n\geq 0}\eop{X}(n)$ with the following cohomological grading
	\[C^n(B)=\prod_{p\geq 0}\eop{X}(p)_{1-n}=\prod_{p\geq 0}\hom(X^{\otimes p},X)_{1-n}.\]
	The differential $d$ of $X$ and the operations $m_n$ define an element \[m=(0,d,m_2,\dots,m_n,\dots)\in C^2(B)\] and the Hochschild differential is 
	\[[m,-].\]
	The \emph{Hochschild cohomology} of $B$, denoted by $HH^*(B)$, is the cohomology of this complex.
	
	The Hochschild complex admits a decreasing filtration $C^*_{\geq m}(B)\subset C^*(B)$, given by taking the product over $p\geq m$, whose cohomology will be denoted by $HH^*_{\geq m}(B)$.
\end{defn}

Recall from \cite{lefevre-hasegawa_sur_2003} the definition of $A$-infinity maps and $A$-infinity homotopies between them. An $A$-infinity map $f\colon B\rightarrow B'$ between $A$-infinity algebras is a sequence of maps of graded $\Bbbk$-modules $f_n\colon B^{\otimes n}\rightarrow B'$, $n\geq 1$, satisfying certain equations, and similarly $A$-infinity homotopies. The map $f_1\colon B\rightarrow B'$ is called the \emph{linear part}. $A$-infinity maps can be composed, giving rise to a category, and $A$-infinity homotopies define a natural equivalence relation therein.

\begin{prop}\label{target}
	Given a base point in $\rmap_{\dgoperad}(\opnot A_\infty,\eop{X})$, which is an $A$-infinity algebra structure $B$ on the chain complex $\Sigma^{-1} X$, the homotopy groups of this mapping space are
	\begin{align*}
		\pi_0\rmap_{\dgoperad}(\opnot A_\infty,\eop{X})={}&\text{$A$-infinity algebra structures on $\Sigma^{-1}X$}\\&\text{modulo $A$-infinity maps with identity linear part,}\\
		\pi_1\rmap_{\dgoperad}(\opnot A_\infty,\eop{X})={}&\text{$A$-infinity self-maps of $B$ with identity linear part}\\&\text{modulo $A$-infinity homotopies with identity linear part,}\\
		\pi_n\rmap_{\dgoperad}(\opnot A_\infty,\eop{X})={}&HH^{2-n}_{\geq 2}(B),\qquad n\geq 2.
	\end{align*}
	The group structure on the fundamental group is given by composition of $A$-infinity maps. 
\end{prop}

\begin{proof}
	A vertex in $\rmap_{\dgoperad}(\opnot A_\infty,\eop{X})$ is an $A$-infinity algebra structure on $\Sigma^{-1}X$. Two of them lie in the same component if and only if the can be linked by a homotopy $H\colon I\opnot A_\infty\rightarrow \eop{X}$ in $\dgoperad$. By the computation of $I\opnot A_\infty$ in \cite{muro_cylinders_2016} recalled above, this is the same as an $A$-infinity map with identity linear part between the $A$-infinity algebra structures defined by the restrictions $Hi_0$ and $Hi_1$.
	
	An element in the fundamental group is represented by a map $T\opnot A_\infty\rightarrow \eop{X}$ in $\opnot A_\infty\downarrow\dgoperad$. By the computation of this operad in Section \ref{adecomp}, this is the same as an $A$-infinity map $f\colon B\rightarrow B$ with identity linear part. Moreover, two such $A$-infinity maps $f,g\colon B\rightarrow B$ represent the same element in the fundamental group if and only if they are linked by a homotopy $H\colon I_{\opnot A_\infty}T\opnot A_\infty\rightarrow \eop{X}$ in $\opnot A_\infty\downarrow\dgoperad$. By the computation of the relative cylinder $I_{\opnot A_\infty}T\opnot A_\infty$ in Corollary \ref{IT}, this is the same as an $A$-infinity homotopy $H\colon f\simeq g$ with identity linear part.
	
	Using the computation of the suspension of the torus $\Sigma_{\opnot A_\infty}T\opnot A_\infty$ in Corollary \ref{torof2},
	\[\Omega^2\rmap_{\dgoperad}(\opnot A_\infty,\eop{X})=\rmap_{\bimod{\opnot A_\infty}}(\Sigma^2\opnot B_{\infty,1,\infty},\eop{X}).\]
	As explained in Section \ref{homoto}, this is the infinite loop space of the $H\Bbbk$-module spectrum associated to the chain complex of $\Bbbk$-modules
	\[\hom_{\bimod{\opnot A_\infty}}(\Sigma^2\opnot B_{\infty,1,\infty},\eop{X})_*=C^{-*}_{\geq 2}(B).\]
	For this equality we use the $\opnot A_\infty$-bimodule structure of $\opnot B_{\infty,1,\infty}$ in Definition \ref{i1i}, which is generated by elements $\bar\mu_n$ of degree $-1$ and arity $n\geq 2$. This computes the higher homotopy groups of the mapping space.
\end{proof}

\section{The tower of moduli spaces}\label{nueve}

In this section we study the tower of moduli spaces of $A_n$-algebras
\[\cdots\r \abs{w\ans{n+1}}\longrightarrow \abs{w\ans{n}}\r\cdots\r \abs{w\ans{1}}\]
considered in the introduction. It does not exactly fit in the axioms of Sections \ref{general} and \ref{truncated} but it is closely related to the tower in the previous section. Its (homotopy) limit is the moduli space of $A$-infinity algebras,
\[\lim_n\abs{w\ans{n}}\simeq \abs{w\ans{\infty}}.\]
We fix a base point here, which is an $A$-infinity algebra $B$. We fix a minimal one built over a projective graded $\Bbbk$-module $\Sigma^{-1}X$, we denote by $A$ its homology graded algebra, i.e.~$\Sigma^{-1}X$ with shifted multiplication $m_2$, and its universal Massey product by
\[\{m_3\}\in\hh{3}{-1}{A}.\]
The tower is based by the truncations of $B$.

An $A_1$-algebra is a plain chain complex since $\opnot A_1$ is the initial operad. Hence $\abs{w\ans{1}}=\abs{w\cplx}$, where $\cplx$ is the usual model category of chain complexes.

By \cite[Theorem 4.6]{muro_moduli_2014} (which goes back to \cite[Theorem 1.1.5]{rezk_spaces_1996}) we have a map of towers
\begin{equation*}\label{maptow}
\xymatrix@C=6pt{
	\cdots\ar[r]&\rmap(\opnot A_{n+1},\eop{X})\ar[rr]\ar[d]&&\rmap(\opnot A_{n},\eop{X})\ar[r]\ar[d]&\cdots\ar[r]&\rmap(\opnot A_{2},\eop{X})\ar[rr]\ar[d]&&{*}\ar[d]^{X}\\
	\cdots\ar[r]&\abs{w\ans{n+1}}\ar[rr]&&\abs{w\ans{n}}\ar[r]&\cdots\ar[r]&\abs{w\ans{2}}\ar[rr]&&\abs{w\cplx}
}
\end{equation*}
where $\rmap$ stands for $\rmap_{\dgoperad}$ and all squares a homotopy pull-backs. This defines a shifted map of Bousfield--Kan spectral sequences from the previous section's spectral sequence (not extended), that we here denote by $\tilde E_r^{st}$, to this section's spectral sequence $E_r^{st}$,
\[\tilde E_r^{st}\To E_r^{s+1,t+1},\] 
which is an isomorphism for $s\geq r-1\geq 0$ and an epimorphism for $0\leq s<r-1$. 

\begin{prop}
	This section's spectral sequence $E_r^{st}$, $r\geq 1$, $t\geq s\geq 0$, can be extended to $s\geq 2r-2$ and $t\in\mathbb Z$ in a unique way such that the previous shifted map of spectral sequences $\tilde E_r^{st}\To E_r^{s+1,t+1}$ extends to the new part and is still an isomorphism for $s\geq r-1\geq 0$. The extended differential $d_r$ squares to zero and its homology is $E_{r+1}^{st}$ whenever the incoming and outgoing differentials are defined at $E_r^{st}$ (for $0\leq s<r$ and $t>s$ we set the incoming differential to be $0\rightarrow E_r^{st}$). 
\end{prop}

\begin{proof}
	There is no ambiguity in the definition of new terms of $E_r^{st}$ since the extension is defined in the range where the spectral sequence map is an isomorphism. Concerning differentials, we only have to check that, for $s\geq r+1$, the homology of 
	\[E_r^{s-r,s-r+1}\stackrel{d_r}{\longrightarrow}E_r^{s,s}\stackrel{d_r}{\longrightarrow}E_r^{s+r,s+r-1}\] 
	is $E_{r+1}^{s,s}$. Here, the first differential is Bousfield--Kan's and the second one lays in the extension. They compose to zero because the spectral sequence map is at least a surjection on the first terms and always a bijection on the two other ones. Moreover, this still holds on the corresponding terms of page $r+1$, hence the homology is indeed $E_{r+1}^{s,s}$.
\end{proof}

\begin{prop}\label{laprimera}
	The fist page of the Bousfield--Kan spectral sequence of this section's based tower is
	\[E_1^{st}=\left\{
	\begin{array}{ll}
	\hc{s+1}{1-t}{A},&s>0\text{ or }t>1;\\
	\aut(X),&(s,t)=(0,1);\\
	\left\{\!\!\!\!\begin{array}{l}\text{quasi-iso.~classes}\\\text{of complexes}\end{array}\!\!\right\},&(s,t)=(0,0).
	\end{array}
	\right.\]
	Here $\aut(X)$ is the automorphism group of the graded $\Bbbk$-module $X$.
	For $s>0$ or $t>1$, the differential $d_1\colon E_1^{st}\r E_1^{s+1,t}$ is the $\Bbbk$-module morphism
	\[d_1=(-1)^{t-s}[m_2,-].\]
	For $(s,t)=(0,1)$ it is the pointed map
	\[d_1(g)=gm_2(g^{-1}\otimes g^{-1})-m_2.\]
\end{prop}

\begin{proof}

	Below we use mapping spaces in $\cplx$, that we simply denote by $\rmap(Y,Z)$. For cofibrant complexes, this mapping space is the Dold--Kan correspondent of the non-negative truncation $t_{\geq 0}\hom(Y,Z)$ of the inner $\hom(Y,Z)$ in $\cplx$ (it actually suffices that $Y$ is cofibrant). Composition in these mapping spaces corresponds to the well-known inner composition in $\cplx$ by the multiplicative properties of the Dold--Kan equivalence \cite{may_simplicial_1967}(the Eilenberg--Zilber and Alexander--Whitney maps). The homotopy automorphism group of a cofibrant complex $Y$ is the group-like full simplicial submonoid $\raut(Y)\subset\rmap(Y,Y)$ spanned by the weak self-equivalences. Since $X$ has trivial differential, any quasi-automorphism is an honest automorphism, so $\raut(X)$ is a simplicial group.
	
	By definition $E_1^{0t}=\pi_t(\abs{w\cplx},X)$, in particular the formula for $E_1^{00}$ follows. The connected component of $X$ in $\abs{w\cplx}$ is $B\raut(X)$, the classifying space of $\raut(X)$. Therefore, for $t>0$,  \[\pi_t(\abs{w\cplx},X)=\pi_t(B\raut(X))=\pi_{t-1}(\raut(X),1_X),\]  
	where $1_X$ is the identity in $X$. The group $\pi_0(\raut(X),1_X)=\aut(X)$ is the plain automorphism group of the graded module $X$, since it is equipped with the trivial differential. Moreover, since $\raut(X)\subset\rmap(X,X)$ is a full simplicial subset, for $t> 1$ we have
	\begin{align*}
		\pi_{t-1}(\raut(X),1_X)&=\pi_{t-1}(\rmap(X,X),1_X)\\
		&\cong \pi_{t-1}(\rmap(X,X),0)\\&=H_{t-1}\hom(X,X)\\&=\hom(X,X)_{t-1}\\&=\hc{1}{1-t}{A}.
	\end{align*} 
	The isomorphism is induced by the simplicial $\Bbbk$-module automorphism of $\rmap(X,X)$ taking the identity to the trivial map, given dimension-wise by subtracting the corresponding degeneracy $s_0^n(1_X)$ of the identity.
	
	The differential $d_1\colon E_1^{0t}\r E_1^{1t}$, $t>0$, is the connecting homomorphism 
	\begin{equation}\label{nini}
	\pi_{t}(\abs{w\cplx},X)\To\pi_{t-1}(\rmap(\opnot A_{2},\eop{X}),(X,m_2))
	\end{equation}
	of the homotopy fiber sequence (the first homotopy pull-back square above),
	\begin{equation*}
	\rmap_{\dgoperad}(\opnot A_2,\eop{X})\To \abs{w\algebra{\opnot A_2}}\To\abs{w\cplx},
	\end{equation*}
	i.e.~it is obtained by taking $\pi_{t-1}$ on the induced map
	\[\aut^h(X)\longrightarrow\rmap_{\dgoperad}(\opnot A_2,\eop{X})\simeq \rmap(\Sigma^{-1}X\otimes X,X).\]
	Here we use that $\aut^h(X)$ is the loop space of $\abs{w\cplx}$ at $X$.
	The weak equivalence is given by the fact that $\opnot A_2$ is freely generated by an element $\mu_2$ of arity $2$ and degree $-1$. The previous map is defined by the conjugation left action of $\aut^h(X)$ on $\rmap(\Sigma^{-1}X\otimes X,X)$. More precisely, it is the action at the base point $m_2\sigma$. In order to compute the homotopy groups of the target, we proceed as with the source, changing the base point by the simplicial $\Bbbk$-module automorphism defined by subtracting $m_2\sigma$ (and its degeneracies), $t\geq 1$, 
	\begin{align*}
	\pi_{t-1}(\rmap(\Sigma^{-1}X\otimes X,X),m_2\sigma)&\cong \pi_{t-1}(\rmap(\Sigma^{-1}X\otimes X,X),0)\\&=H_{t-1}\hom(\Sigma^{-1}X\otimes X,X)\\
	&=\hom(\Sigma^{-1}X\otimes X,X)_{t-1}\\
	&=\hom(X\otimes X,X)_{t-2}\\
	&=\hc{2}{1-t}{A}.
	\end{align*}
	Hence, for $t=1$ and $s=0$, the differential is clearly as described in the statement. For $t>1$, an element in $\pi_{t-1}(\aut^h(X),1_X)$ is represented by a simplex $g=s_0^{t-1}(1_X)+x\in \aut^h(X)_{t-1}$, where $x\in \hom(X,X)_{t-1}$ is a cycle, and a tedious but straightforward computation shows that the conjugate is
	\begin{align*}
	gs_0^{t-1}(m_2\sigma)\Sigma^{-1}(g^{-1}\otimes g^{-1})
	&=s_0^{t-1}(m_2\sigma)+(-1)^t[m_2,x].
	\end{align*}
	This proves the formula for $d_1$ in the remaining cases, i.e.~for $t>1$ and $s=0$.
\end{proof}

\begin{cor}\label{e22bis}
	The second page of the Bousfield--Kan spectral sequence of this section's tower satisfies
	\[E_2^{st}=\left\{
	\begin{array}{ll}
	\hh{s+1}{1-t}{A},&s>0, t\geq 2;\\
	\hz{1}{1-t}{A},&s=0, t>1;\\
	\aut(A),&(s,t)=(0,1);\\
	\left\{\!\!\!\!\begin{array}{l}\text{quasi-iso.~classes}\\\text{of complexes}\end{array}\!\!\right\},&(s,t)=(0,0).
	\end{array}
	\right.\]
	Here $\aut(A)$ is the automorphism group of the graded algebra $A$.
\end{cor}

\begin{proof}
	The only term which does not follow directly is $E^{00}_2$. 
	The set $E_r^{00}$, $t\geq 0$, is by definition the image of $\pi_0\abs{w\ans{r+1}}\r \pi_0\abs{w\cplx}= E_1^{00}$. Hence $E_r^{00}=E_1^{00}$ since any chain complex can be endowed with the trivial $A_{r+1}$-algebra structure.
\end{proof}

We now compute the differential of the second page. 

\begin{prop}\label{e2dif2bis}
	The differential $d_2\colon E_2^{st}\r E_2^{s+2,t+1}$ of the Bousfield--Kan spectral sequence  of the tower in this section  is, for $t>s>0$ and $(s,t)\neq (1,2)$ and for $s\geq 2$ and $t\in\mathbb Z$, the $\Bbbk$-module morphism
	\[d_2=(-1)^{t-s}[\{m_3\},-].\]
	For $s=0$ and $t>1$ it is the $\Bbbk$-module morphism
	\[\xymatrix@C=15pt{d_{2}\colon \hz{1}{1-t}{A}\ar@{->>}[r]& \hh{1}{1-t}{A}\ar[rrrr]^-{(-1)^{t}[\{m_3\},-]}&&&& \hh{3}{-t}{A}.}\]
	For $(s,t)=(1,2)$ it is given by the abelian group morphism
	\[d_2(x)=-[\{m_3\},x]-x\smile x.\]
	For $(s,t)=(0,1)$ it is the pointed map
	\[d_2(g)=g\cdot \{m_3\}-\{m_3\}.\]
	Here we use the left conjugation action of $\aut(A)$ on Hochschild cohomology, defined by functoriality. 
\end{prop}

\begin{proof}
	This is a continuation of the argument in the proof of Proposition \ref{laprimera}. 
	It suffices to check the cases $t>s=0$. By \eqref{differential_alternative}, the differential $d_2\colon E_2^{0,t}\r E_2^{2,t+1}$ is defined by chasing the following diagram
	\[\pi_{t}(\abs{w\cplx},X)\longleftarrow
		\pi_{t}(\abs{w\ans{2}},(X,m_2))\longrightarrow\pi_{t-1}\rmap_{\opnot A_2\downarrow\dgoperad}(\opnot A_3,\eop{X}).
		\]
	The morphism on the left can also be obtained by taking $\pi_{t-1}$ on the loops, which is the map 
	\[\raut_{\opnot A_2}(X,m_2)\longrightarrow \raut(X)\]
	between the homotopy automorphisms of $(X,m_2)$ and $X$ induced by the forgetful functor from $A_2$-algebras to chain complexes. Moreover, the morphism on the right of the zig-zag identifies with $\pi_{t-1}$ of the map 
	\[\raut_{\opnot A_2}(X,m_2)\longrightarrow\rmap_{\opnot A_2\downarrow\dgoperad}(\opnot A_3,\eop{X})\simeq\rmap(\Sigma^{-1}X^{\otimes 3},X)\]
	induced by the conjugation left action of the source at $m_3\sigma$, the base point of the target. The weak equivalence comes from the fact that $\opnot A_3$ is obtained from $\opnot A_2$ by adjoining a generator $\mu_3$ of arity $3$ and degree $-1$.  The conjugation left action obviously factors through the previous forgetful map. Summing up, $d_2$ for $t>s=0$ is induced by  $\pi_{t-1}$ of the map
	\[\raut(X)\longrightarrow\rmap(\Sigma^{-1}X^{\otimes 3},X)\]
	given by the left conjugation action of the source at $m_3\sigma$. We now easily derive the formulas in the statement as in the proof of Proposition \ref{laprimera}.
\end{proof}

Note that, for $(s,t)=(0,1)$, $d_2$ is not a group morphism, despite of being a map from a group to a $\Bbbk$-module. In fact, the $\Bbbk$-module structure on the target of that second differential is a specific `anomaly' of our particular situation, in general it is only a pointed set since it is over the fringed line. The kernel $E_3^{01}$ of that $d_2$ is nevertheless a subgroup, formed by the automorphisms of $A$ fixing the universal Massey product.

\begin{rem}
	By \cite[Theorem 4.6]{muro_moduli_2014} (after \cite[Theorem 1.1.5]{rezk_spaces_1996}) we have a homotopy fiber sequence
	\[\rmap(\opnot A_{\infty},\eop{X})\longrightarrow\abs{w\ans{\infty}}\longrightarrow \abs{w\cplx}.\]
	We have seen in the proof of Proposition \ref{target} that the double loop space of the mapping space based at an $A$-infinity algebra structure $B$ on $\Sigma^{-1}X$ is the infinity loop space of the $H\Bbbk$-module spectrum associated to the chain complex $C^{-*}_{\geq 2}(B)$. The double loop space of $\abs{w\cplx}$ based at $X$ corresponds in the same way to the infinity loop space of the chain complex $\hom(X,X)_{1+*}$, compare the proof of Proposition \ref{laprimera}. We believe that the double loop space of $\abs{w\ans{\infty}}$ based at $B$ corresponds to the infinity loop space of $C^{-*}_{\geq 1}(B)$ and the double loop space of the previous homotopy fiber sequence corresponds to the infinity loop space of the obvious short exact sequence of complexes
	\[C^{-*}_{\geq 2}(B)\hookrightarrow C^{-*}_{\geq 1}(B)\twoheadrightarrow \hom(X,X)_{1+*},\]
	see Definition \ref{hhinfty}. This seems to be consistent with \cite[Corollary 1.6]{toen_homotopy_2007} and \cite[Corollary 2.3.3.7]{toen_homotopical_2008}. Moreover, this would endow the higher homotopy groups $\pi_n\abs{w\ans{\infty}}$, $n\geq 2$, with a $\Bbbk$-module structure, that we claim to be compatible with this section's spectral sequence $\Bbbk$-module structure in the region $t-s\geq 2$. We have not been able to prove it in general but we can prove it after taking loop spaces once more.
\end{rem}


\providecommand{\bysame}{\leavevmode\hbox to3em{\hrulefill}\thinspace}
\providecommand{\MR}{\relax\ifhmode\unskip\space\fi MR }
\providecommand{\MRhref}[2]{%
	\href{http://www.ams.org/mathscinet-getitem?mr=#1}{#2}
}
\providecommand{\href}[2]{#2}

\end{document}